\newtheoremstyle{WreschTheoremstyle} 
                        {1.5em}    
                        {2.5em}    
                        {}         
                        {}         
                        {\bfseries}
                        {}        
                        {\newline} 
                        {\raisebox{0.6em}{\thmname{#1}\thmnumber{#2}\thmnote{ (#3)}}}
\newcommand{\vertiii}[1]{{\left\vert\kern-0.25ex\left\vert\kern-0.25ex\left\vert #1 
    \right\vert\kern-0.25ex\right\vert\kern-0.25ex\right\vert}}
\newcommand{\R}{\mathbb{R}}
\newcommand{\C}{\mathbb{C}}
\newcommand{\N}{\mathbb{N}}
\newcommand{\K}{\mathcal{K}}
\newcommand{\B}{\mathbb{B}}
\newcommand{\E}{\mathbb{E}}
\renewcommand{\1}{\mathbbm{1}}
\newcommand{\Lb}{\mathcal{L}}
\newcommand{\esssup}{\mathrm{ess}\sup}
\newtheorem{Theorem}{Theorem}[section]
\newtheorem{Corollary}[Theorem]{Corollary}
\newtheorem{Lemma}[Theorem]{Lemma}
\newtheorem{Remark}[Theorem]{Remark}
\newtheorem{Definition}[Theorem]{Definition}
\newtheorem{Example}[Theorem]{Example}
\numberwithin{equation}{section}
\newcommand{\customlabel}[1]{%
     \stepcounter{ref}%
   \protected@write
\@auxout{}{\string\newlabel{#1}{{\thesatz.\arabic{ref}}{\thepage}{\thesatz.\arabic{ref}}{#1}{}}}%
   \hypertarget{#1}{\thesatz.\arabic{ref}}%
}
\newenvironment{sciabstract}{\begin{quote}}{\end{quote}}
\newcounter{lastnote}
\title{Linear evolution equations in scales of Banach spaces}
\newcommand{\pdftitle} {Linear evolution equations in scales of Banach spaces}
\newcommand{\pdfauthor}{Martin Friesen}
\author{
Martin Friesen\footnote{Fakult\"at f\"ur Mathematik, Bergische Universit\"at Wuppertal, Germany, friesen@math.uni-wuppertal.de}\\
}
\def\HyPsd@CatcodeWarning#1{}
\begin{document}

\maketitle

\begin{sciabstract}\textbf{Abstract:}
This work is devoted to the study of a class of linear time-inhomogeneous evolution equations in a scale of Banach spaces.
Existence, uniquenss and stability for classical solutions is provided.
We study also the associated dual Cauchy problem for which we prove uniqueness in the dual scale of Banach spaces.
The results are applied to an infinite system of ordinary differential equations 
but also to the Fokker-Planck equation associated with the spatial logistic model in the continuum.
\end{sciabstract}

\noindent \textbf{AMS Subject Classification: } 34A12, 34G10, 46A13, 47B37 \\
\textbf{Keywords: } Scales of Banach spaces; Ovsyannikov technique; Fokker-Planck equations

\section{Introduction}

\subsection{Motivation}
Interacting particle systems in the continuum can modeled on the phase space of locally finite configurations over $\R^d$, i.e.
\begin{align}\label{GAMMA}
 \Gamma = \{ \gamma \subset \R^d \ | \ |\gamma \cap \Lambda| < \infty \ \ \text{ for all compacts } \Lambda \subset \R^d \},
\end{align}
where $|A|$ denotes the number of elements in the set $A \subset \R^d$.
Markov dynamics on state space $\Gamma$ is usually described by a (heuristic) Markov operator $L$ 
acting on an appropriate class of functions $F: \Gamma \longrightarrow \R$. In the particular case of birth-and-death evolutions it has the general form
\begin{align}\label{BAD}
 (LF)(\gamma) = \sum \limits_{x \in \gamma}d(x,\gamma \backslash \{x\})(F(\gamma \backslash \{x\}) - F(\gamma))
 + \int \limits_{\R^d}b(x,\gamma)(F(\gamma \cup \{x\}) - F(\gamma))dx,
\end{align}
where $d \geq 0$ is the death-rate and $b \geq 0$ the birth-rate, respectively.
Other particular examples are discussed in \cite{FFHKKK15}, see also the references therein.
In some cases, the corresponding dynamics can be obtained from stochastic differential equations \cite{GK06}, 
other examples can be treated by the theory of Dirichlet forms \cite{AKR98, AKR98b, KL05}.
In most of the cases, however, one seeks to study dynamics in terms of state evolutions associated to the forward Kolmogorov equation
(also called Fokker-Planck equation)
\begin{align}\label{FPE}
 \frac{d}{dt}\int \limits_{\Gamma} F(\gamma)\mu_t(d\gamma) = \int \limits_{\Gamma}(LF)(\gamma) \mu_t(d\gamma), \ \ \mu_t|_{t = 0} = \mu_0,
\end{align}
where $F: \Gamma \longrightarrow \R$ belongs to a suitable space of test functions. Note that 
for $L$ of the form \eqref{BAD} one may take $F$ from
the class of polynomially bounded cylinder functions,
see Section 5 for details.
Since $\Gamma$ is an infinite dimensional, non-linear space the
study of \eqref{FPE} is a non-trivial mathematical task. 
Based on the work \cite{KKM08} the following appraoach 
for the study of the Fokker-Planck equation
was established in the literature.
First, we define for a reasonable class of states $\mu_t$ 
a sequence of correlation functions
$k_{\mu_t}^{(n)}: (\R^d)^n \longrightarrow \R_+$, $n \geq 0$,
by the relation
\[
 \int \limits_{\Gamma} \sum \limits_{\{ x_1, \dots, x_n\} \subset \gamma} f_n(x_1,\dots, x_n) \mu_t(d\gamma)
  = \frac{1}{n!}\int \limits_{(\R^d)^n} f_n(x_1,\dots, x_n) k_{\mu_t}^{(n)}(x_1,\dots, x_n) dx_1 \dots dx_n,
\]
where $f_n: (\R^d)^n \longrightarrow \R_+$ is symmetric, bounded and measurable. It can be shown that each
$k_{\mu_t}^{(n)}$ is symmetric and positive definite in the sense of Lenard. Moreover, there exists a one-to-one correspondence between such correlation functions and states, see, e.g., \cite{KK02}.
The study of \eqref{FPE} is therefore based on the study of the correlation function evolution $k_t := (k^{(n)}_{\mu_t})_{n=0}^{\infty}$ which should satisfy the Cauchy problem
\begin{align}\label{BBGKY}
 \frac{d k_{t}^{(n)}}{dt} = (L^{\Delta}k_{t})^{(n)}, \ \ k_{t}^{(n)}|_{t= 0} = k_{0}^{(n)}, \ \ n \geq 0.
\end{align}
Here $L^{\Delta}$ is an operator-valued matrix 
acting on the components of $k_t = (k_t^{(n)})_{n = 0}^{\infty}$.
Note that $L^{\Delta}$ can be computed explicitly
from $L$, see, e.g., \cite{FKO07} and \cite{FFHKKK15}.
Equation \eqref{BBGKY} can be seen as an Markov analogue of the BBGKY-hierarchy known from physics.
In contrast to \eqref{FPE}, equation \eqref{BBGKY} is 
an infinite system of finite-dimensional evolution equations
for which different functional analytic methods may be used.

Such an approach was successfully applied to various models \cite{KKP08, F11, FKK09, FKK13EST},
where the evolution of correlation functions was constructed in a weighted $L^{\infty}$-type space $\K_{\alpha}$ equipped with the norm
\begin{align*}
 \| k \|_{\mathcal{K}_{\alpha}} = \sup \limits_{n \geq 0} e^{- \alpha n} \| k^{(n)} \|_{L^{\infty}( (\R^d)^n)}, \ \ \alpha \in \R.
\end{align*}
A general semigroup approach is proposed in \cite{FKK12}, where, under some suitable conditions,
existence and uniqueness of classical solutions to \eqref{BBGKY} was shown. 
Since $\K_{\alpha}$ is a weighted $L^{\infty}$-Fock space, 
the main technical tool there is based on the study of the pre-dual Cauchy problem (evolution of quasi-observables)
and on a clever use of sun-dual spaces for strongly continuous semigroups. 
It is worth noting that existence of a solution to \eqref{BBGKY} does not immediately yield also a solution to \eqref{FPE}.
It is necessary and sufficient that $k_t$ is positive definite in the sense of Lennard, see \cite{KK02} and the references therein.
Such a positivity property was studied for particular models in \cite{FKK13, BKK15}, while a general result based on semigroup methods was obtained in \cite{FK16}. 
As a consequence, using the theory of semigroups one is able to obtain existence of solutions to \eqref{FPE},
while uniqueness holds among all solutions $\mu_t$ for which the associated sequence of correlation functions $k_{\mu_t}$ is a classical solution 
to \eqref{BBGKY}. It is worth noting that the weak formulation of the Fokker-Planck equation \eqref{FPE} does not require that $\mu_t$
also provides a classical solution to \eqref{BBGKY}. An extension of this uniqueness statement 
(without requiring that $k_{\mu_t}$ is a classical solution to \eqref{BBGKY}) was given in \cite{FK16, F17, FK18b}.
The latter result was essentially based on \cite{WZ02, WZ06}.

An application of semigroup methods often requires that the constant mortality rate $d(x,\{\emptyset\})$ is large enough, i.e. requires a
'high-mortality regime'. Going beyond this regime, one may sill construct an evolution of correlation functions 
obtained as the unique classical solution to \eqref{BBGKY} in the scale of Banach spaces $(\K_{\alpha})_{\alpha \in \R}$, 
see, e.g., \cite{BKKK13, FKKZ14, FKK15WRMODEL, KK18}.
Existence of a solution to \eqref{FPE} is again related with the possibility to show that this classical solution is positive definite in the sense of Lennard.
In such a case, one obtains a solution to \eqref{FPE} which is, in addition, unique among all solutions
for which the associated correlation function evolution provides a classical solution to \eqref{BBGKY} in the scale $(\K_{\alpha})_{\alpha \in \R}$.
An abstract formulation of the methods used in this approach was recently summarized in \cite{F15}. 

At present there does not exist any (general) uniqueness statement for \eqref{FPE} which does not require that the correlation function
evolution $k_{\mu_t}$ is a classical solution to \eqref{BBGKY}
in the scale $(\K_{\alpha})_{\alpha}$ as explained in Section 3 or in \cite{F15}.
The main purpose of this work is devoted to the study of \eqref{FPE} and \eqref{BBGKY} with main emphasis on
\begin{enumerate}
 \item[(i)] Study existence and uniqueness of classical solutions in the time-inhomogeneous case.
 \item[(ii)] Stability of the solutions with respect to the initial data and the parameters of the model.
 \item[(iii)] Uniqueness for solutions to \eqref{FPE} and \eqref{BBGKY} without assuming that $k_t$ is a classical solution.
\end{enumerate}
Points (i) and (ii) have been already studied in \cite{F15} for the time-homogeneous case.
Below we introduce an abstract formulation of this framework, and study the corresponding results in this abstract framework.
An application to the spatial logistic model which corresponds to a particular case of \eqref{FPE} is discussed in Section 5.

\subsection{Linear operators in a scale of Banach spaces}
Let $\E = (\E_{\alpha})_{\alpha > \alpha_*}$ be a scale of Banach spaces with $\alpha_* \in \R$ and
\begin{align}\label{LINEAR:29}
 \E_{\alpha} \subset \E_{\alpha'}, \ \ \Vert \cdot \Vert_{\alpha'} \leq \Vert \cdot \Vert_{\alpha}, \ \ \alpha' < \alpha.
\end{align}
Here and below we let $\alpha' < \alpha$ always stand for $\alpha > \alpha' > \alpha_*$.
Denote by $i_{\alpha \alpha'} \in L(\E_{\alpha}, \E_{\alpha'})$ the corresponding embedding operator. 
Here and in the following $L(\E_{\alpha}, \E_{\alpha'})$ stands for the space of all bounded linear operators from $\E_{\alpha}$ to $\E_{\alpha'}$,
and denote by $\Vert \cdot \Vert_{\alpha'\alpha}$ the corresponding operator norm on $L(\E_{\alpha}, \E_{\alpha'})$, $\alpha' < \alpha$.
Let $x=y$, $x \in \E_{\alpha'}$, $y \in \E_{\alpha}$ stand for $i_{\alpha \alpha'}x = y$.
\begin{Definition}
 A bounded linear operator $A$ in the scale $\E$ is, by definition, a collection of bounded linear operators from $\E_{\alpha}$ to $\E_{\alpha'}$, i.e.
 $A = (A_{\alpha \alpha'})_{\alpha' < \alpha}$, $A_{\alpha\alpha'} \in L(\E_{\alpha}, \E_{\alpha'})$, satisfying 
 \begin{align}\label{LINEAR:00}
  A_{\alpha'' \alpha'} = i_{\alpha \alpha'}A_{\alpha'' \alpha} = A_{\alpha \alpha'}i_{\alpha'' \alpha}, \ \ \alpha' < \alpha < \alpha''.
 \end{align}
\end{Definition}
By $A \in L(\E)$ we indicate that $L$ is a bounded linear operator in the scale $\E$.
Given two operators $A,B \in L(\E)$, the composition $AB \in L(\E)$ is, for all $\alpha' < \alpha$, defined by
\begin{align}\label{LINEAR:01}
 (AB)_{\alpha \alpha'} := A_{\beta \alpha'}B_{\alpha \beta},
\end{align}
where $\beta \in (\alpha', \alpha)$. 
It is worth noting that definition \eqref{LINEAR:01} does not depend on the particular choice of $\beta$, see \eqref{LINEAR:00}.
In view of \eqref{LINEAR:00} one finds that $(i_{\alpha \alpha'})_{\alpha ' < \alpha} \in L(\E)$ acts as an identity operator on $L(\E)$.
Note that such notion of linear operators includes the classical case of unbounded linear operators.
\begin{Example}
 For each $\alpha > \alpha_*$ let $(A_{\alpha}, D(A_{\alpha}))$ be a (possibly unbounded) linear operator in $\E_{\alpha}$. 
 Moreover, assume that $i_{\alpha \alpha'}(\E_{\alpha}) \subset D(A_{\alpha'})$ and
 \begin{align}\label{EQ:21}
  i_{\alpha'' \alpha'}A_{\alpha''}i_{\alpha \alpha''} = A_{\alpha'}i_{\alpha \alpha'} \in L(\E_{\alpha}, \E_{\alpha'}), \ \ \alpha' < \alpha'' < \alpha.
 \end{align}
 Then $A_{\alpha \alpha'} := A_{\alpha'}i_{\alpha \alpha'}$ defines an element $A = (A_{\alpha \alpha'})_{\alpha' < \alpha} \in L(\E)$.
\end{Example}

\subsection{Linear evolution equations in a scale of Banach spaces}
Consider a family of operators $(A(t))_{t \geq 0} \subset L(\E)$ with the property
\begin{enumerate}
 \item[(A1)] For all $\alpha' < \alpha$, $t \longmapsto A_{\alpha\alpha'}(t) \in L(\E_{\alpha}, \E_{\alpha'})$ is continuous in the operator norm.
\end{enumerate}
We suppose that it is associated to a forward (or backward) evolution system on the scale $\E$.
The precise conditions are formulated and discussed in Section 2.
Let us just mention that such conditions are more general then classical evolution systems studied as studied, e.g., in \cite{PAZ83}. They are designed, in particular,
to apply to scales of weighted $L^{\infty}$-spaces, see also Section 5.
Let $(B(t))_{t \geq 0} \subset L(\E)$ be another family of operators with the properties
\begin{enumerate}
 \item[(B1)] For all $\alpha' < \alpha$ and $k \in \E_{\alpha}$, $t \longmapsto B_{\alpha \alpha'}(t)k \in \E_{\alpha'}$ is continuous.
 \item[(B2)] There exists an increasing continuous function $M: (\alpha_*, \infty) \longrightarrow [0,\infty)$ with
 \begin{align}\label{LINEAR:33}
  \Vert B_{\alpha \alpha'}(t)\Vert_{\alpha \alpha'} \leq \frac{M(\alpha)}{\alpha - \alpha'}, \ \ t \geq 0, \ \ \alpha' < \alpha.
 \end{align}
\end{enumerate}
We are interested in solutions to the forward and backward Cauchy problems
\begin{align*}
 \frac{\partial u(t)}{\partial t} &= (A(t) + B(t))u(t), \ \ u(s) = k, \ \ t \geq s,
 \\ \frac{\partial v(s)}{\partial s} &= - (A(s) + B(s))v(s), \ \ v(t) = k, \ \ s \in [0,t]
\end{align*}
in the scale $\E$. The precise definitions and results are formulated in Section 3.
It is worthwhile to mention that similar equations have been recently studied in the case where $B(t)$ was a nonlinear operator \cite{SAF95}, \cite{FK18}.
Such equations with operators $A(t), B(t)$ as above have also applications to partial differential equations, 
see \cite{T08, CAPS, H13, BHP15}.
We close this presentation with one simple example
for which the results obtained in the subsequent sections can be applied. 
\begin{Example}
 Let $(X, \mathcal{B}(X), \mu)$ be a $\sigma$-finite Borel space
 and take a family of functions
 $\omega_{\alpha}:X \longrightarrow [0,\infty)$ indexed by $\alpha \in \R$ satisfying
 \[
  \omega_{\alpha'} \leq \omega_{\alpha} \ \ \text{ and }\ \ \esssup \limits_{x \in X}m(x) \frac{\omega_{\alpha'}(x)}{\omega_{\alpha}(x)} < \infty, \qquad \forall \alpha' < \alpha.
 \]
 Define $\E_{\alpha} = L^1(X, \omega_{\alpha}\mu)$ with 
 norm $\| f\|_{\alpha} = \int_X |f(x)| \omega_{\alpha}(x)\mu(dx)$.
 Consider the objects
 \begin{enumerate}
   \item[(i)] $m: X \longrightarrow [0,\infty)$ is
   measurable and locally bounded.
   \item[(ii)] $k: X \times X \longrightarrow \R$ is measurable
   and, for all $\alpha' < \alpha$, there exists an increasing function $M(\alpha) > 0$ with
   \[
    \int \limits_{X}|k(x,y)| \omega_{\alpha'}(x)\mu(dx) \leq \frac{M(\alpha)}{\alpha - \alpha'}\omega_{\alpha}(y), \qquad \mu-\text{ a.a. } y \in X.
   \]
 \end{enumerate} 
 Then $A$ and $B$ defined by
 \[
  (Af)(x) := -m(x)f(x)\ \ \text{ and } \ \ (Bf)(x) = \int \limits_{X}f(y)k(x,y)\mu(dy) 
 \]
 satisfy properties (A1), (B1) and (B2).
 Moreover, the semigroup generated by $A$ satisfies the conditions formulated in Section 2.
\end{Example}
More delicate 
examples and applications are discussed in Sections 4 and 5.

\subsection{Structure of the work}
This work is organized as follows. 
In Section 2 we study some basic properties of forward and backward evolution systems associated to $(A(t))_{t \geq 0}$ given by (A1).
The main results of this work, that is existence, uniqueness, stability and the dual Cauchy problems, are formulated and proved in Section 3.
Applications to a system of ordinary differential equations and Markov evolutions in the continuum are considered in Sections 4 and 5, respectively.

\section{Some simple properties for evolution systems}
Let $\E = (\E_{\alpha})_{\alpha > \alpha_*}$ be a scale of Banach spaces in the sense of \eqref{LINEAR:29}.
Below we first introduce the notion of a forward evolution system associated to $(A(t))_{t \geq 0}$ and give some simple properties used Section 3.
Afterwards we state the corresponding results (without proofs) for the backward evolution systems.

\subsection{The forward evolution system}
Let $(A(t))_{t \geq 0}$ be given as in (A1).
Consider a family of operators $(V(t,s))_{t \geq s \geq 0}$ on the scale $\E$ satisfying the following conditions
\begin{enumerate}
 \item[(A2)] We have $(V(t,s))_{t \geq s \geq 0} \subset L(\E)$, and $V_{\alpha \alpha'}(t,t) = i_{\alpha \alpha'}$, for all $\alpha' < \alpha$ and $t \geq 0$.
 Moreover, for any $\alpha' <  \alpha$ and $k \in \E_{\alpha}$, the mapping $(t,s) \longmapsto V_{\alpha \alpha'}(t,s)k$ is continuous in $\E_{\alpha'}$.
 \item[(A3)] For any $\alpha' < \alpha'' < \alpha$, $k \in \E_{\alpha}$ and $0 \leq s \leq t$, 
  \begin{align*}
   V_{\alpha \alpha'}(t,s)k = k + \int \limits_{s}^{t}A_{\alpha'' \alpha'}(r)V_{\alpha \alpha''}(r,s)k dr, \qquad
   V_{\alpha \alpha'}(t,s)k = k - \int \limits_{s}^{t}V_{\alpha'' \alpha'}(t,r)A_{\alpha \alpha''}(r)k dr,
  \end{align*}
 where the integrals exist in $\E_{\alpha'}$.
\end{enumerate}
The following properties are immediate consequences of (A1) -- (A3).
\begin{Lemma}\label{LINEARLEMMA:00}
 Let $(A(t))_{t \geq 0}$ be given as in (A1) and let $(V(t,s))_{t \geq s \geq 0}$ satisfy (A2) and (A3). 
 Then the following assertions hold:
 \begin{enumerate}
  \item[(a)] For any $\alpha' < \alpha$, $k \in \E_{\alpha}$ and $s \geq 0$, the mapping $[s,\infty) \ni t \longmapsto V_{\alpha \alpha'}(t,s)k$
  is continuously differentiable in $\E_{\alpha'}$, and we have, for all $\alpha'' \in (\alpha', \alpha)$,
 \begin{align}
  \label{LINEAR:30} \frac{\partial }{\partial t}V_{\alpha\alpha'}(t,s)k &= A_{\alpha'' \alpha'}(t)V_{\alpha \alpha''}(t,s)k.
 \end{align}
  \item[(b)] For any $\alpha' < \alpha$, $k \in \E_{\alpha}$ and $t > 0$, the mapping $[0,t] \ni s \longmapsto V_{\alpha \alpha'}(t,s)k$
  is continuously differentiable in $\E_{\alpha'}$, and we have for all $\alpha'' \in (\alpha', \alpha)$
 \begin{align}
   \label{LINEAR:31} \frac{\partial }{\partial s}V_{\alpha \alpha'}(t,s)k &= - V_{\alpha'' \alpha'}(t,s)A_{\alpha \alpha''}(s)k.
 \end{align}
  \item[(c)] $(V(t,s))_{t \geq s \geq 0}$ is uniquely determined by \eqref{LINEAR:30} and \eqref{LINEAR:31}.
  \item[(d)] Suppose that, for all $\alpha' < \alpha$, the mapping $t \longmapsto A_{\alpha\alpha'}(t)$ is continuous with respect to the operator norm on  $L(\E_{\alpha}, \E_{\alpha'})$. Then assertions (a) and (b) also hold with respect to the operator norm on $L(\E_{\alpha}, \E_{\alpha'})$ for all $\alpha'  < \alpha$.
 \end{enumerate}
\end{Lemma}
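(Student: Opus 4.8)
The four assertions in Lemma~\ref{LINEARLEMMA:00} are essentially differentiations of the integral identities in (A3), so the strategy is in each case to exhibit the integrand as a continuous $\E_{\alpha'}$-valued function of the integration variable and then invoke the fundamental theorem of calculus for Banach-space valued (Bochner/Riemann) integrals. The one genuine subtlety is that the operators in the scale only act from $\E_\alpha$ into strictly larger spaces $\E_{\alpha'}$, so one must be careful to keep track of which index carries which space and to use the compatibility relation \eqref{LINEAR:00} (together with its analogue for $V$ implicit in (A2)--(A3)) whenever an intermediate index $\alpha''$ is introduced or changed.

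For part (a), fix $\alpha' < \alpha$, $k \in \E_\alpha$, $s \ge 0$, and pick any $\alpha'' \in (\alpha',\alpha)$. The plan is to show the map $r \longmapsto A_{\alpha''\alpha'}(r) V_{\alpha\alpha''}(r,s)k \in \E_{\alpha'}$ is continuous on $[s,\infty)$: by (A2) the orbit $r \mapsto V_{\alpha\alpha''}(r,s)k$ is continuous in $\E_{\alpha''}$, and by (A1) $r \mapsto A_{\alpha''\alpha'}(r)$ is norm-continuous in $L(\E_{\alpha''},\E_{\alpha'})$, so the composition is continuous in $\E_{\alpha'}$ (a standard product estimate: $\|A_{\alpha''\alpha'}(r)V(r,s)k - A_{\alpha''\alpha'}(r_0)V(r_0,s)k\|_{\alpha'} \le \|A_{\alpha''\alpha'}(r)\|_{\alpha''\alpha'}\,\|V(r,s)k - V(r_0,s)k\|_{\alpha''} + \|A_{\alpha''\alpha'}(r) - A_{\alpha''\alpha'}(r_0)\|_{\alpha''\alpha'}\,\|V(r_0,s)k\|_{\alpha''}$, with the operator norms locally bounded by continuity). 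The first identity in (A3) then says $V_{\alpha\alpha'}(t,s)k$ is the indefinite integral of this continuous function, hence continuously differentiable in $\E_{\alpha'}$ with derivative \eqref{LINEAR:30}. Independence of the particular choice of $\alpha''$ follows from \eqref{LINEAR:00} and the compatibility of the $V$'s. Part (b) is entirely parallel, differentiating the second identity in (A3) in the variable $s$; here the relevant integrand is $r \longmapsto V_{\alpha''\alpha'}(t,r)A_{\alpha\alpha''}(r)k$, whose continuity in $\E_{\alpha'}$ again follows by combining (A1) and (A2), and one gets \eqref{LINEAR:31}.

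For part (c), uniqueness, the plan is the usual one: if $(V(t,s))$ and $(\widetilde V(t,s))$ both satisfy \eqref{LINEAR:30} and \eqref{LINEAR:31} (with the common initial condition $V_{\alpha\alpha'}(t,t)=i_{\alpha\alpha'}$ from (A2)), fix $t > s$, $k \in \E_\alpha$ and consider, for $\alpha' < \alpha$ and an intermediate index, the function $r \longmapsto \widetilde V_{\cdot\,\alpha'}(t,r) V_{\alpha\,\cdot}(r,s)k$ on $[s,t]$; differentiating in $r$ via \eqref{LINEAR:31} for $\widetilde V$ and \eqref{LINEAR:30} for $V$, the two terms cancel, so this function is constant, and evaluating at $r=s$ and $r=t$ gives $\widetilde V_{\alpha\alpha'}(t,s)k = V_{\alpha\alpha'}(t,s)k$. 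The mild technical point to check is that this "mixed" function is indeed differentiable, which follows from parts (a), (b) and a product rule; since the intermediate indices introduced for $\widetilde V$ and $V$ can be chosen in the common interval $(\alpha',\alpha)$ and the results are index-independent, this causes no difficulty. Finally, part (d) is obtained by rerunning the arguments of (a) and (b) with $k$ replaced by the identity: under the stronger hypothesis that $t \mapsto A_{\alpha\alpha'}(t)$ is norm-continuous, the integrands above are continuous in operator norm, so the integral identities (A3)—which then hold in $L(\E_\alpha,\E_{\alpha'})$—can be differentiated in operator norm.

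The main obstacle, such as it is, is purely bookkeeping: making sure that whenever an intermediate index $\alpha''$ is inserted (to give meaning to a composition $A_{\alpha''\alpha'} V_{\alpha\alpha''}$ or to apply the product rule in part (c)) the resulting expression genuinely lands in $\E_{\alpha'}$ and does not depend on the choice of $\alpha''$; this is exactly what \eqref{LINEAR:00} and the consistency built into (A2)--(A3) guarantee. Beyond that, everything reduces to the elementary fact that the indefinite integral of a continuous Banach-space-valued function is $C^1$ with the expected derivative.
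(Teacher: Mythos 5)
Your plan for parts (a)--(c) coincides with the paper's proof: continuity of the integrands in (A3) via the norm continuity of $A$ from (A1) and the strong continuity of $V$ from (A2), the fundamental theorem of calculus for Banach-space-valued integrals, and for (c) the standard interposition $r \longmapsto \widetilde V(t,r)V(r,s)k$ whose $r$-derivative vanishes (the paper writes it with the roles of $V$ and $\widetilde V$ exchanged, which is immaterial). The one place where your plan has a genuine gap is (d): you assert that "the integrands above are continuous in operator norm" once $t \mapsto A_{\alpha\alpha'}(t)$ is norm continuous, but the integrand $r \mapsto A_{\alpha''\alpha'}(r)V_{\alpha\alpha''}(r,s)$ also contains $V$, and (A2) only provides \emph{strong} continuity of $(t,s)\mapsto V_{\alpha\alpha'}(t,s)$. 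You therefore need an extra step, which the paper supplies: using the first identity in (A3) one gets, for small $h$,
\[
 \| V_{\alpha \alpha'}(t+h,s)k - V_{\alpha \alpha'}(t,s)k\|_{\alpha'} \leq |h| \sup_{r} \| A_{\alpha'' \alpha'}(r) \|_{\alpha'' \alpha'}\, \sup_{r}\| V_{\alpha \alpha''}(r,s) \|_{\alpha \alpha''}\, \| k \|_{\alpha},
\]
a bound uniform in $\|k\|_\alpha\le 1$ (the suprema being finite by the uniform boundedness principle), so that $t \mapsto V_{\alpha\alpha'}(t,s)$ is in fact locally Lipschitz in operator norm; the analogous estimate handles the $s$-variable. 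Only after this upgrade are the integrands in (A3) norm continuous and the integrals convergent in $L(\E_\alpha,\E_{\alpha'})$, whereupon your differentiation argument goes through. With that step inserted, your proof is the paper's proof.
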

 \begin{proof}
 Using (A1) and (A2) one obtains that the integrals in (A3) are continuous in $r$. Hence, the left-hand sides in (A3) are continuously
 differentiable which yields assertions \textit{(a)} and \textit{(b)}.
 Concerning \textit{(c)}, let $(\widetilde{V}(t,s))_{t \geq s \geq 0}$ also satisfy (A2) and (A3) with the same $(A(t))_{t \geq 0}$.
 Take $\alpha' < \alpha_0 < \alpha_1 < \alpha$, $k \in \E_{\alpha}$ and $0 \leq s < t$, 
 then $[s,t] \ni r \longmapsto V_{\alpha_0\alpha'}(t,r)i_{\alpha_1 \alpha_0}\widetilde{V}_{\alpha\alpha_1}(r,s)k$ is continuously differentiable in $\E_{\alpha'}$ 
 such that $\frac{d}{dr}V_{\alpha_0\alpha'}(t,r)i_{\alpha_1 \alpha_0}\widetilde{V}_{\alpha\alpha_1}(r,s)k = 0$.
 Integrating over $r \in [s,t]$ gives 
 \[
 0 = V_{\alpha'' \alpha'}(t,t)\widetilde{V}_{\alpha\alpha''}(t,s)k - V_{\alpha''\alpha}(t,s)\widetilde{V}_{\alpha \alpha''}(s,s)k 
   = \widetilde{V}_{\alpha \alpha'}(t,s)k - V_{\alpha \alpha'}(t,s)k,
 \]
 where we have used $V(t,s), \widetilde{V}(t,s) \in L(\E)$ and property (A2). This proves \textit{(c)}.
 Let us prove \textit{(d)}. Using (A3) we obtain, for all $\alpha' < \alpha'' < \alpha$, $k \in \E_{\alpha}$ and $h \in [0,1]$ small enough, 
 \[
  \| V_{\alpha \alpha'}(t+h,s)k - V_{\alpha \alpha'}(t,s)k\|_{\alpha'} \leq h \sup \limits_{r \in [t,t+1]} \| A_{\alpha'' \alpha}(r) \| _{\alpha'' \alpha'} \sup \limits_{r \in [t,t+1]}\| V_{\alpha \alpha''}(r,s) \|_{\alpha \alpha''} \| k \|_{\alpha},
 \]
 and a similar estimate for $h \in [-1,0]$ close enough to $0$.
 Hence $t \longmapsto V_{\alpha \alpha'}(t,s) \in L(\E_{\alpha}, \E_{\alpha'})$ is continuous in the operator norm.
 Similarly one can show that $s \longmapsto V_{\alpha \alpha'}(t,s)$ is continuous in the operator norm.
 From this and the continuity of $t \longmapsto A_{\alpha\alpha'}(t)$ we deduce that the integrals in (A3) exist in the operator norm on $L(\E_{\alpha}, \E_{\alpha'})$.
 This proves the assertion.
\end{proof}
The following remark justifies the name forward evolution system for $(V(t,s))_{t \geq s \geq 0}$.
\begin{Remark}
 Suppose that (A1) -- (A3) are satisfied.
 Then, for all $t \geq r \geq s \geq 0$, it holds that $V(t,s) = V(t,r)V(r,s)$, where the composition is defined by \eqref{LINEAR:00}, i.e.
 \[
  V_{\alpha \alpha'}(t,s) = V_{\alpha'' \alpha'}(t,r)V_{\alpha \alpha''}(r,s), \ \ \alpha' < \alpha'' < \alpha.
 \]
 This property can be directly deduced from previous Lemma. 
 Since it is also a particular case of the results discussed in Section 3, we omit here the proof.
\end{Remark}
Next we provide a simple stability estimate for the forward evolution system $(V(t,s))_{t \geq s \geq 0}$.
\begin{Lemma}\label{LEMMA:STABILITY}
 Let $(V(t,s))_{t \geq s \geq 0}$, $(A(t))_{t \geq 0}$ and $(\widetilde{V}(t,s))_{t \geq s \geq 0}$, $(\widetilde{A}(t))_{t \geq 0}$
 both satisfy (A1) -- (A3). Moreover, suppose that, for all $\alpha' < \alpha$, the mappings
 $t \longmapsto A_{\alpha \alpha'}(t)$ and $t \longmapsto \widetilde{A}_{\alpha \alpha'}(t)$
 are continuous with respect to the operator norm on $L(\E_{\alpha}, \E_{\alpha'})$.
 Then, for all $t \geq s \geq 0$ and $\alpha ' < \alpha_0 < \alpha_1 < \alpha$, one has
 \[
  \| V_{\alpha \alpha'}(t,s) - \widetilde{V}_{\alpha \alpha'}(t,s) \|_{\alpha \alpha'} 
  \leq  C(\alpha', \alpha_0, \alpha_1, \alpha,t,s) \int \limits_{s}^{t} \| A_{\alpha_1 \alpha_0}(r) - \widetilde{A}_{\alpha_1\alpha_0}(r) \|_{\alpha_1 \alpha_0}dr,
 \]
 where $C(\alpha', \alpha_0, \alpha_1, \alpha,t,s) = \sup_{r \in [s,t]} \| V_{\alpha_0 \alpha'}(t,r) \|_{\alpha_0 \alpha'} \cdot \sup_{r \in [s,t]} \| \widetilde{V}_{\alpha \alpha_1}(r,s) \|_{\alpha \alpha_1}$ is finite due to (A2) and the uniform boundedness principle.
\end{Lemma}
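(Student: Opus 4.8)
The plan is to reduce the estimate to the Duhamel (variation of parameters) identity
\begin{align*}
 V_{\alpha \alpha'}(t,s)k - \widetilde{V}_{\alpha \alpha'}(t,s)k = \int \limits_{s}^{t} V_{\alpha_0 \alpha'}(t,r)\big( A_{\alpha_1 \alpha_0}(r) - \widetilde{A}_{\alpha_1 \alpha_0}(r) \big) \widetilde{V}_{\alpha \alpha_1}(r,s)k \, dr , \qquad k \in \E_{\alpha} ,
\end{align*}
after which the assertion follows by taking the norm $\| \cdot \|_{\alpha'}$, estimating $\| V_{\alpha_0 \alpha'}(t,r) \|_{\alpha_0 \alpha'}$ and $\| \widetilde{V}_{\alpha \alpha_1}(r,s) \|_{\alpha \alpha_1}$ by their suprema over $r \in [s,t]$ (finite by (A2) and the uniform boundedness principle), dividing by $\| k \|_{\alpha}$ and passing to the supremum over the unit ball of $\E_{\alpha}$. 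Every composition occurring in the identity is meaningful because $\alpha' < \alpha_0 < \alpha_1 < \alpha$: $\widetilde{V}_{\alpha \alpha_1}(r,s) \in L(\E_{\alpha}, \E_{\alpha_1})$, $A_{\alpha_1 \alpha_0}(r), \widetilde{A}_{\alpha_1 \alpha_0}(r) \in L(\E_{\alpha_1}, \E_{\alpha_0})$ and $V_{\alpha_0 \alpha'}(t,r) \in L(\E_{\alpha_0}, \E_{\alpha'})$.

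To obtain the identity I would fix $k \in \E_{\alpha}$ and differentiate, for $r \in [s,t]$, the curve $\Phi(r) := V_{\alpha_0 \alpha'}(t,r) \widetilde{V}_{\alpha \alpha_0}(r,s) k \in \E_{\alpha'}$. By Lemma~\ref{LINEARLEMMA:00}\,(a) the map $r \longmapsto \widetilde{V}_{\alpha \alpha_0}(r,s)k$ is continuously differentiable in $\E_{\alpha_0}$ with derivative $\widetilde{A}_{\alpha_1 \alpha_0}(r) \widetilde{V}_{\alpha \alpha_1}(r,s)k$, and by Lemma~\ref{LINEARLEMMA:00}\,(b),(d) (this is where the operator-norm continuity of $A$ and $\widetilde{A}$ enters), fixing any auxiliary index $\gamma \in (\alpha', \alpha_0)$, the map $r \longmapsto V_{\alpha_0 \alpha'}(t,r) \in L(\E_{\alpha_0}, \E_{\alpha'})$ is continuously differentiable in the operator norm with derivative $- V_{\gamma \alpha'}(t,r) A_{\alpha_0 \gamma}(r)$. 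The Leibniz rule for the product of an operator-norm $C^1$ family and a $C^1$ curve then yields $\Phi \in C^1([s,t];\E_{\alpha'})$ with
\begin{align*}
 \Phi'(r) = - V_{\gamma \alpha'}(t,r) A_{\alpha_0 \gamma}(r) \widetilde{V}_{\alpha \alpha_0}(r,s)k + V_{\alpha_0 \alpha'}(t,r) \widetilde{A}_{\alpha_1 \alpha_0}(r) \widetilde{V}_{\alpha \alpha_1}(r,s)k .
\end{align*}

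The key step is to rewrite the first summand so that both operators act over the single intermediate index $\alpha_0$. Applying the compatibility relations \eqref{LINEAR:00} for $\widetilde{V}(r,s)$, for $A(r)$ (twice) and for $V(t,r)$, that is $\widetilde{V}_{\alpha \alpha_0}(r,s) = i_{\alpha_1 \alpha_0} \widetilde{V}_{\alpha \alpha_1}(r,s)$, then $A_{\alpha_0 \gamma}(r) i_{\alpha_1 \alpha_0} = A_{\alpha_1 \gamma}(r) = i_{\alpha_0 \gamma} A_{\alpha_1 \alpha_0}(r)$, and finally $V_{\gamma \alpha'}(t,r) i_{\alpha_0 \gamma} = V_{\alpha_0 \alpha'}(t,r)$, one finds
\begin{align*}
 V_{\gamma \alpha'}(t,r) A_{\alpha_0 \gamma}(r) \widetilde{V}_{\alpha \alpha_0}(r,s)k = V_{\alpha_0 \alpha'}(t,r) A_{\alpha_1 \alpha_0}(r) \widetilde{V}_{\alpha \alpha_1}(r,s)k ,
\end{align*}
which in particular is independent of $\gamma$. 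Hence $\Phi'(r) = V_{\alpha_0 \alpha'}(t,r) \big( \widetilde{A}_{\alpha_1 \alpha_0}(r) - A_{\alpha_1 \alpha_0}(r) \big) \widetilde{V}_{\alpha \alpha_1}(r,s)k$. Integrating over $r \in [s,t]$ and using $V_{\alpha_0 \alpha'}(t,t) = i_{\alpha_0 \alpha'}$, $\widetilde{V}_{\alpha \alpha_0}(s,s) = i_{\alpha \alpha_0}$ together again with \eqref{LINEAR:00}, one gets $\Phi(t) - \Phi(s) = \widetilde{V}_{\alpha \alpha'}(t,s)k - V_{\alpha \alpha'}(t,s)k$, which is exactly the Duhamel identity stated above.

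I expect the only genuinely delicate point to be the bookkeeping of the embeddings $i_{\cdot \cdot}$ in \eqref{LINEAR:00}: differentiating $V_{\alpha_0 \alpha'}(t,r)$ forces the auxiliary index $\gamma \in (\alpha', \alpha_0)$, and one must check that after the compatibility relations this auxiliary index disappears and the two $A$-terms end up sandwiched between the very operators $V_{\alpha_0 \alpha'}(t,r)$ and $\widetilde{V}_{\alpha \alpha_1}(r,s)$ that appear in the constant $C(\alpha',\alpha_0,\alpha_1,\alpha,t,s)$. Everything else — the Leibniz rule in $\E_{\alpha'}$, the fundamental theorem of calculus for Banach-space-valued $C^1$ curves, and the final norm estimate — is routine given Lemma~\ref{LINEARLEMMA:00}.
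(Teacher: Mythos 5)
Your proof is correct and follows essentially the same route as the paper: differentiate $r \longmapsto V_{\alpha_0\alpha'}(t,r)\,i_{\alpha_1\alpha_0}\widetilde{V}_{\alpha\alpha_1}(r,s)$ via Lemma \ref{LINEARLEMMA:00}, integrate to get the Duhamel identity, and estimate. The only (immaterial) differences are that the paper carries out the differentiation directly in the operator norm using Lemma \ref{LINEARLEMMA:00}(d) rather than strongly on a fixed $k$, and leaves the bookkeeping of the auxiliary index $\gamma$ implicit, which you spell out correctly.
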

\begin{proof}
 Using Lemma \ref{LINEARLEMMA:00}.(d) we see that 
 $[s,t] \ni r \longmapsto V_{\alpha_0 \alpha'}(t,r)  i_{\alpha_1 \alpha_0} \widetilde{V}_{\alpha \alpha_1}(r,s)$ is continuously differentiable in $L(\E_{\alpha}, \E_{\alpha'})$
 and satisfies
 \begin{align*}
   \frac{d}{dr}V_{\alpha_0 \alpha'}(t,r)  i_{\alpha_1 \alpha_0} \widetilde{V}_{\alpha \alpha_1}(r,s) 
   &= V_{\alpha_0 \alpha'}(t,r)\left( - A_{\alpha_1 \alpha_0}(r) + \widetilde{A}_{\alpha_1 \alpha_0}(r) \right) \widetilde{V}_{\alpha \alpha_1}(r,s).
 \end{align*}
 Integrating over $r \in [s,t]$, using $V(t,s), \widetilde{V}(t,s) \in L(\E)$ and (A2) gives
 \begin{align*}
  \widetilde{V}_{\alpha \alpha'}(t,s) - V_{\alpha \alpha'}(t,s) = \int \limits_{s}^{t} V_{\alpha_0 \alpha'}(t,r)\left( - A_{\alpha_1 \alpha_0}(r) + \widetilde{A}_{\alpha_1 \alpha_0}(r) \right) \widetilde{V}_{\alpha \alpha_1}(r,s)dr.
 \end{align*}
 Taking the norm $\| \cdot \|_{\alpha \alpha'}$ and using the triangle inequality for the integral proves the assertion.
\end{proof}
The next remark gives a sufficient condition for classical forward evolution systems to satisfy conditions (A1) -- (A3), see \cite{PAZ83, KOL13} for their study.
\begin{Remark}
 For each $\alpha > \alpha_*$ let $(V_{\alpha}(t,s))_{t \geq s \geq 0} \subset L(\E_{\alpha})$, and let $(A_{\alpha}(t), D(A_{\alpha}(t))$ be linear operators on $\E_{\alpha}$ satisfying the following properties
 \begin{enumerate}
  \item[(i)] $V_{\alpha}(t,t) = 1$, $V_{\alpha}(t,r)V_{\alpha}(r,s) = V_{\alpha}(t,s)$ for all $t \geq r \geq s \geq 0$ and $\alpha > \alpha_*$.
  \item[(ii)] $V_{\alpha'}(t,s)|_{\E_{\alpha}} = i_{\alpha \alpha'}V_{\alpha}(t,s)$ for any $\alpha' < \alpha$ and $t \geq s \geq 0$.
  \item[(iii)] $(t,s) \longmapsto V_{\alpha'}(t,s)k \in \E_{\alpha'}$ is continuous for any $\alpha' < \alpha$ and $k \in \E_{\alpha}$.
  \item[(iv)] $i_{\alpha \alpha'}(\E_{\alpha}) \subset D(A_{\alpha'}(t))$ and \eqref{EQ:21} hold for all $t \geq 0$.
  Moreover, $[0,\infty) \ni t \longmapsto A_{\alpha'}(t)i_{\alpha \alpha'} \in L(\E_{\alpha}, \E_{\alpha'})$ is strongly continuous for all $\alpha' < \alpha$.
 \end{enumerate}
 Then (A1) and (A2) are satisfied by 
 \[
  V_{\alpha \alpha'}(t,s) = V_{\alpha'}(t,s) i_{\alpha \alpha'}, \qquad A_{\alpha \alpha'}(t) = A_{\alpha'}(t)i_{\alpha \alpha'}, \ \ \alpha' < \alpha, \ \ t \geq 0.
 \]
 Suppose, in addition, the condition
 \begin{enumerate}
  \item[(v)] For all $\alpha' < \alpha$, $t \geq s \geq 0$ and $k \in \E_{\alpha}$,
  \[
   V_{\alpha'}(t,s)k = k + \int \limits_{s}^{t}A_{\alpha'}V_{\alpha'}(r,s)i_{\alpha \alpha'} k dr, \qquad
   V_{\alpha'}(t,s)k = k - \int \limits_{s}^{t}V_{\alpha'}(t,r)A_{\alpha'}(r)i_{\alpha \alpha'}k dr.
  \]
 \end{enumerate}
 then also property (A3) is satisfied.
\end{Remark}
In the time-homogeneous case, i.e. $A(t)$ is independent of $t$, 
above conditions can be simplyfied as follows.
\begin{Remark}\label{TIMEHOMOGENEOUS}
 For each $\alpha > \alpha_*$ let $(T_{\alpha}(t))_{t \geq 0} \subset L(\E_{\alpha})$ be a strongly continuous semigroup with generator $(A_{\alpha},D(A_{\alpha}))$ on $\E_{\alpha}$. 
 Suppose that
 \begin{enumerate}
  \item[(i)] $T_{\alpha'}(t)|_{\E_{\alpha}} = T_{\alpha}(t)$, for all $t \geq 0$ and $\alpha' < \alpha$.
  \item[(ii)] $\E_{\alpha} \subset D(A_{\alpha'})$ and $A_{\alpha'}i_{\alpha \alpha'} \in L(\E_{\alpha}, \E_{\alpha'})$.
 \end{enumerate}
 Combining properties (i) and (ii) together with \cite[p.123, Theorem 5.5]{PAZ83} gives $A_{\alpha}k = A_{\alpha'}k$ for all
 \[
  \ k \in D(A_{\alpha}) = \{ h \in D(A_{\alpha'}) \cap \E_{\alpha} \ | \ A_{\alpha'}h \in \E_{\alpha} \}
  = \{ h \in \E_{\alpha} \| \ A_{\alpha'}h \in \E_{\alpha} \}.
 \]
 Hence $A_{\alpha \alpha'} := A_{\alpha'}i_{\alpha \alpha'}$ and $V_{\alpha \alpha'}(t,s) := T_{\alpha\alpha'}(t-s) := T_{\alpha'}(t-s)i_{\alpha \alpha'}$
 satisfy properties (A1) -- (A3). Moreover, one can show that, for any $\alpha' < \alpha$ and $k \in \E_{\alpha}$, the function
$t \longmapsto T_{\alpha\alpha'}(t)k$ is infinitely often differentiable in $\E_{\alpha'}$ such that, for any $n \geq 1$,
\[
 \frac{d^n}{dt^n}T_{\alpha\alpha'}(t)k = A^n_{\alpha'' \alpha'}T_{\alpha \alpha''}(t)k = T_{\alpha''\alpha'}(t)A^n_{\alpha \alpha''}k, \qquad t \geq 0.
\]
\end{Remark}

\subsection{The backward evolution system}
Consider a family of operators $(V(s,t))_{t \geq s \geq 0}$ on the scale $\E$ satisfying the following conditions
\begin{enumerate}
 \item[(A2)$^*$] We have $(V(s,t))_{t \geq s \geq 0} \subset L(\E)$ with $V_{\alpha \alpha'}(t,t) = i_{\alpha \alpha'}$ for all $\alpha' < \alpha$ and $t \geq 0$.
  Moreover, for any $\alpha' <  \alpha$ and $k \in \E_{\alpha}$, the mapping $(s,t) \longmapsto V_{\alpha \alpha'}(s,t)k$ is continuous in $\E_{\alpha'}$.
 \item[(A3)$^*$] For any $\alpha' < \alpha'' < \alpha$, $k \in \E_{\alpha}$ and $0 \leq s \leq t$, 
  \begin{align*}
   V_{\alpha \alpha'}(s,t)k = k + \int \limits_{s}^{t}V_{\alpha'' \alpha'}(s,r)A_{\alpha \alpha''}(r)k dr, \qquad
   V_{\alpha \alpha'}(s,t)k = k - \int \limits_{s}^{t}A_{\alpha'' \alpha'}(r)V_{\alpha \alpha''}(r,t)k dr.
  \end{align*}
\end{enumerate}
The following properties are immediate consequences of (A1), and (A2)$^*$, (A3)$^*$
and can be deduced by similar arguments to Lemma \ref{LINEARLEMMA:00}.
\begin{Lemma}\label{LINEARLEMMA:10}
 Let $(A(t))_{t \geq 0}$ be given by (A1) and $(V(s,t))_{t \geq s \geq 0}$ satisfy properties (A2)$^*$,(A3)$^*$.
 Then the following assertions hold
 \begin{enumerate}
  \item[(a)] For any $\alpha' < \alpha$, $k \in \E_{\alpha}$ and $s \geq 0$, the mapping $[s,\infty) \ni t \longmapsto V_{\alpha \alpha'}(s,t)k$
  is continuously differentiable in $\E_{\alpha'}$, such that for all $\alpha'' \in (\alpha', \alpha)$,
 \begin{align}\label{EQ:09} 
  \frac{\partial }{\partial t}V_{\alpha\alpha'}(t,s)k &= V_{\alpha'' \alpha'}(s,t)A_{\alpha \alpha''}(t)k.
 \end{align}
  \item[(b)] For any $\alpha' < \alpha$, $k \in \E_{\alpha}$ and $t > 0$, the mapping $[0,t] \ni s \longmapsto V_{\alpha \alpha'}(s,t)k$
  is continuously differentiable in $\E_{\alpha'}$, such that for all $\alpha'' \in (\alpha', \alpha)$,
 \begin{align}\label{EQ:10} 
   \frac{\partial }{\partial s}V_{\alpha \alpha'}(t,s)k &= - A_{\alpha'' \alpha'}(s)V_{\alpha \alpha''}(s,t)k.
 \end{align}
  \item[(c)] $(V(s,t))_{t \geq s \geq 0}$ is uniquely determined by \eqref{EQ:09} and \eqref{EQ:10}.
  \item[(d)] Suppose that, for all $\alpha' < \alpha$, the mapping $t \longmapsto A_{\alpha\alpha'}(t)$ is continuous with respect to the operator norm on  $L(\E_{\alpha}, \E_{\alpha'})$. Then assertions (a) and (b) also hold with respect to the operator norm on $L(\E_{\alpha}, \E_{\alpha'})$ for all $\alpha'  < \alpha$.
 \end{enumerate}
\end{Lemma}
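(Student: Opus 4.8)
The plan is to transcribe the proof of Lemma~\ref{LINEARLEMMA:00} with the two time variables interchanged. In the backward setting the first identity of (A3)$^*$ plays the role of the first identity of (A3) -- it represents $V_{\alpha\alpha'}(s,t)k$ as $k$ plus an integral with variable \emph{upper} limit $t$ whose integrand does not depend on $t$ -- while the second identity of (A3)$^*$ plays the role of the second identity of (A3), with variable \emph{lower} limit $s$. For parts (a) and (b), fix $\alpha'<\alpha''<\alpha$ and $k\in\E_\alpha$. First I would verify that the integrands $r\longmapsto V_{\alpha''\alpha'}(s,r)A_{\alpha\alpha''}(r)k$ and $r\longmapsto A_{\alpha''\alpha'}(r)V_{\alpha\alpha''}(r,t)k$ are continuous in $\E_{\alpha'}$: by (A1), $r\mapsto A_{\alpha\alpha''}(r)k\in\E_{\alpha''}$ and $r\mapsto A_{\alpha''\alpha'}(r)$ (in operator norm) are continuous; by (A2)$^*$ the families $(s,r)\mapsto V_{\alpha''\alpha'}(s,r)$ and $(r,t)\mapsto V_{\alpha\alpha''}(r,t)$ are strongly continuous, and locally bounded in the operator norm by the uniform boundedness principle; a routine two-term estimate then gives continuity of the compositions. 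Since the integrals converge in $\E_{\alpha'}$ by hypothesis, the fundamental theorem of calculus applied to the two identities of (A3)$^*$ yields that $t\mapsto V_{\alpha\alpha'}(s,t)k$ is $C^1$ with derivative the first integrand at $r=t$, and $s\mapsto V_{\alpha\alpha'}(s,t)k$ is $C^1$ with derivative the second integrand at $r=s$; these are precisely \eqref{EQ:09} and \eqref{EQ:10}, and since $\alpha''\in(\alpha',\alpha)$ was arbitrary the formulas hold for all such $\alpha''$.

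For part (c), let $(\widetilde{V}(s,t))_{t\geq s\geq 0}$ also satisfy (A2)$^*$ and (A3)$^*$ with the same $(A(t))_{t\geq 0}$. Fix $\alpha'<\alpha_0<\alpha_1<\alpha$, $k\in\E_\alpha$ and $0\leq s\leq t$, and consider the map
\[
 [s,t]\ni r\longmapsto \Psi(r):=\widetilde{V}_{\alpha_0\alpha'}(s,r)\,i_{\alpha_1\alpha_0}\,V_{\alpha\alpha_1}(r,t)k\in\E_{\alpha'}.
\]
Using part (a) for $\widetilde{V}$ in its second time argument and part (b) for $V$ in its first time argument, the product rule shows that $\Psi$ is continuously differentiable with $\Psi'(r)=0$, the two contributions cancelling once the auxiliary exponents are reconciled via \eqref{LINEAR:00} (exactly as in the proof of Lemma~\ref{LINEARLEMMA:00}). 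Integrating over $r\in[s,t]$ and using $V_{\alpha\alpha'}(t,t)=i_{\alpha\alpha'}$, $\widetilde{V}_{\alpha\alpha'}(s,s)=i_{\alpha\alpha'}$ and $V,\widetilde{V}\in L(\E)$ gives $\Psi(t)-\Psi(s)=\widetilde{V}_{\alpha\alpha'}(s,t)k-V_{\alpha\alpha'}(s,t)k=0$, which is the uniqueness claim.

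For part (d) I would argue exactly as in Lemma~\ref{LINEARLEMMA:00}(d): from the two identities of (A3)$^*$ one obtains, for $|h|$ small,
\[
 \|V_{\alpha\alpha'}(s,t+h)k-V_{\alpha\alpha'}(s,t)k\|_{\alpha'}\leq |h|\sup_{r}\|V_{\alpha''\alpha'}(s,r)\|_{\alpha''\alpha'}\sup_{r}\|A_{\alpha\alpha''}(r)\|_{\alpha\alpha''}\|k\|_\alpha
\]
and a similar bound in $s$, so that $t\mapsto V_{\alpha\alpha'}(s,t)$ and $s\mapsto V_{\alpha\alpha'}(s,t)$ are continuous in the operator norm; assuming in addition that $t\mapsto A_{\alpha\alpha'}(t)$ is operator-norm continuous, the integrands in (A3)$^*$ become operator-norm continuous, the integrals converge in $L(\E_\alpha,\E_{\alpha'})$, and the differentiation carried out in (a) and (b) goes through in the operator norm. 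I do not expect a genuine obstacle here, since the whole argument is a transcription of Lemma~\ref{LINEARLEMMA:00} with $s\leftrightarrow t$; the only points requiring care are bookkeeping ones -- keeping the chain $\alpha'<\alpha_0<\alpha_1<\alpha$ (and the further intermediate exponents in (a), (b)) consistent and invoking \eqref{LINEAR:00} at each composition -- together with the (routine) continuity-of-integrand check needed to apply the fundamental theorem of calculus.
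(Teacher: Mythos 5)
Your proposal is correct and is exactly what the paper does: the paper gives no separate proof of this lemma, stating only that it ``can be deduced by similar arguments to Lemma \ref{LINEARLEMMA:00}'', which is precisely the transcription with $s\leftrightarrow t$ (FTC on the two integral identities for (a), (b); differentiating the mixed product $\widetilde{V}(s,r)V(r,t)k$ in $r$ for (c); operator-norm difference quotients for (d)) that you carry out. The only caveat is a sign-bookkeeping issue inherited from the paper's own statement of (A3)/(A3)$^*$ — the second integral identities need a plus sign in front of the integral for \eqref{LINEAR:31} and \eqref{EQ:10} to come out with the stated minus signs — which affects your transcription no more than it affects the original.
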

As before, the next remark explains why $(V(s,t))_{t \geq s \geq 0}$ is called backward evolution system.
\begin{Remark}
 Suppose that (A1), (A2)$^*$ and (A3)$^*$ are satisfied. Then $V(s,t) = V(s,r)V(r,t)$ holds for all $t \geq r \geq s \geq 0$ in the sense of \eqref{LINEAR:00}.
\end{Remark}
Clearly a similar estimate to the one proved in Lemma \ref{LEMMA:STABILITY} can be also obtained in this case.

\section{Construction of the perturbation}
\subsection{Construction of forward evolution system}
Let $(V(t,s))_{t  \geq s \geq 0}$ be a family of operators as in (A2). Suppose, in addition, that 
\begin{enumerate}
  \item[(A4)] There exists a constant $K \geq 1$ such that for all $\alpha' < \alpha$
 \begin{align*}
  \Vert V_{\alpha \alpha'}(t,s)\Vert_{\alpha \alpha'} \leq K, \ \ 0 \leq s \leq t.
 \end{align*}
\end{enumerate}
Below we provide the construction of a perturbed family of operators 
\begin{align}\label{EQ:15}
  \{ W_{\alpha \alpha'}(t,s) \in L(\E_{\alpha}, \E_{\alpha'}) \ | \ \alpha' < \alpha, \ \ 0 \leq t - s < T(\alpha', \alpha) \},
\end{align}
where $T(\alpha', \alpha) := \frac{\alpha - \alpha'}{2Ke M(\alpha)}$, $\alpha' < \alpha$, with the convention that $1 / 0 := + \infty$.
Such a construction is based on the Ovsyannikov technique, see e.g. \cite{F15} and \cite{FK18} for some recent related results.
\begin{Theorem}\label{THEOREM:00}
 Suppose that $(V(t,s))_{t \geq s \geq 0}$ satisfies (A2), (A4) and $(B(t))_{t \geq 0}$ satisfies (B1), (B2).
 Then there exists a family of operators $W(t,s)$ as in \eqref{EQ:15} with the properties
 \begin{enumerate}
  \item[(a)] For any $\alpha' < \alpha$ and $0 \leq t-s < T(\alpha', \alpha)$,
  $(t,s) \longmapsto W_{\alpha \alpha'}(t,s)$ is strongly continuous on $L(\E_{\alpha}, \E_{\alpha'})$ with $W_{\alpha \alpha'}(t,t) = i_{\alpha \alpha'}$, and
  \[
   \Vert W_{\alpha \alpha'}(t,s) \Vert_{\alpha \alpha'} \leq  \frac{T(\alpha', \alpha)}{T(\alpha',\alpha) - (t-s)}K.
  \]
  \item[(b)] For all $\alpha' < \alpha'' < \alpha$ with $0 \leq t - s < \min\{ T(\alpha', \alpha), T(\alpha'', \alpha)\}$ we have
  \[
   W_{\alpha \alpha'}(t,s) = i_{\alpha'' \alpha'}W_{\alpha \alpha''}(t,s),
  \]
  and for  all $\alpha' < \alpha'' < \alpha$ with $0 \leq t - s < \min\{ T(\alpha', \alpha), T(\alpha', \alpha'')\}$ we have
  \[
   W_{\alpha \alpha'}(t,s) = W_{\alpha'' \alpha'}(t,s)i_{\alpha \alpha''}.
  \]
  \item[(c)] For all $\alpha' < \alpha'' < \alpha$ with $0 \leq t - s < \min\{ T(\alpha', \alpha), T(\alpha'', \alpha)\}$ we have
  \begin{align}\label{EQ:06}
   W_{\alpha \alpha'}(t,s)k = V_{\alpha \alpha'}(t,s)k + \int \limits_{s}^{t}(V(t,r)B(r))_{\alpha'' \alpha'}W_{\alpha \alpha''}(r,s)k dr, \ \ \forall k \in \E_{\alpha}.
  \end{align}
   Moreover $W(t,s)$ is unique with such property.
  \item[(d)] For all $\alpha' < \alpha'' < \alpha$ with $0 \leq t - s < \min\{ T(\alpha', \alpha), T(\alpha', \alpha'')\}$ we have
  \begin{align*}
   W_{\alpha \alpha'}(t,s)k = V_{\alpha \alpha'}(t,s)k + \int \limits_{s}^{t}W_{\alpha'' \alpha'}(t,r)(B(r)V(r,s))_{\alpha \alpha''}k dr, \ \ \forall k \in \E_{\alpha}.
  \end{align*}
   Moreover $W(t,s)$ is also unique with such property.
 \end{enumerate}
\end{Theorem}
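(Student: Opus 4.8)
The plan is to construct $W(t,s)$ by the classical Ovcyannikov/Dyson-series iteration, using estimate (B2) together with the factorial gain from the nested time integrations to beat the $\frac{1}{\alpha-\alpha'}$ blow-up of the operator norms. Concretely, I would fix the target pair $\alpha' < \alpha$ and, for an auxiliary strictly decreasing chain, define the iterates $W^{(0)}_{\alpha\alpha'}(t,s) := V_{\alpha\alpha'}(t,s)$ and
\[
 W^{(n)}_{\alpha\alpha'}(t,s)k := \int_s^t (V(t,r)B(r))_{\beta\alpha'} W^{(n-1)}_{\alpha\beta}(r,s)k\, dr, \qquad n\ge 1,
\]
where $\beta$ is chosen appropriately between $\alpha'$ and $\alpha$. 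The first task is the bookkeeping of the intermediate indices: to iterate $n$ times one splits the interval $(\alpha',\alpha)$ into $n$ equal (or geometrically shrinking) subintervals, so that at the $j$-th step the gap available is of size $\frac{\alpha-\alpha'}{n}$, and one uses (A4) to bound the $V$-factors by $K$ and (B2) to bound each $B$-factor by $\frac{M(\alpha)}{(\alpha-\alpha')/n} = \frac{nM(\alpha)}{\alpha-\alpha'}$. Combined with the $\frac{(t-s)^n}{n!}$ from the $n$-fold iterated integral and the elementary bound $\frac{n^n}{n!} \le e^n$, this yields
\[
 \| W^{(n)}_{\alpha\alpha'}(t,s)\|_{\alpha\alpha'} \le K \left( \frac{2KeM(\alpha)(t-s)}{\alpha-\alpha'}\right)^n = K\left(\frac{t-s}{T(\alpha',\alpha)}\right)^n
\]
after reorganizing constants (the factor $2$ and one extra $K$ come from summing the geometric-type losses in the index splitting). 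Summing the geometric series gives absolute convergence in $L(\E_\alpha,\E_{\alpha'})$ for $t-s < T(\alpha',\alpha)$ and the bound in (a); the limit $W_{\alpha\alpha'}(t,s) := \sum_{n\ge0} W^{(n)}_{\alpha\alpha'}(t,s)$ is the candidate.

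Next I would verify the structural properties. Strong continuity in (a): each $W^{(n)}$ is strongly continuous by (A2), (B1) and dominated convergence, and the series converges uniformly on compact time sets (away from $T(\alpha',\alpha)$), so the sum is strongly continuous; $W_{\alpha\alpha'}(t,t) = V_{\alpha\alpha'}(t,t) = i_{\alpha\alpha'}$ since all higher iterates vanish at $t=s$. The consistency relations (b) follow by induction on $n$ from the consistency \eqref{LINEAR:00} of the operators $V$ and $B$ in the scale together with the fact that \eqref{LINEAR:01} is independent of the intermediate index — one checks that $i_{\alpha''\alpha'}W^{(n)}_{\alpha\alpha''}$ and $W^{(n)}_{\alpha''\alpha'}i_{\alpha\alpha''}$ both satisfy the same recursion as $W^{(n)}_{\alpha\alpha'}$ (restricted to the appropriate range of $t-s$), so they coincide. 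The integral equation (c): summing the defining recursion $W^{(n)} = \int_s^t (V(t,r)B(r))_{\alpha''\alpha'} W^{(n-1)}_{\alpha\alpha''}(r,s)\,dr$ over $n\ge 1$ and adding $W^{(0)} = V_{\alpha\alpha'}$ gives \eqref{EQ:06}, where interchanging sum and integral is justified by the uniform bounds just established; here one must check that the fixed intermediate index $\alpha''$ in the statement can be used in place of the running chain, which again reduces to property (b) and index-independence of \eqref{LINEAR:01}.

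For uniqueness in (c), suppose $\widetilde W$ also satisfies \eqref{EQ:06}; then $D := W - \widetilde W$ satisfies the homogeneous equation $D_{\alpha\alpha'}(t,s)k = \int_s^t (V(t,r)B(r))_{\alpha''\alpha'} D_{\alpha\alpha''}(r,s)k\,dr$, and iterating this $n$ times while splitting indices exactly as above forces $\| D_{\alpha\alpha'}(t,s)\|_{\alpha\alpha'} \le \big(\tfrac{t-s}{T}\big)^n \sup \|D\|$ for every $n$, hence $D = 0$ on $t-s<T(\alpha',\alpha)$. Part (d) is proved identically, using instead the second integral identity in (A3) — i.e. expanding $V$ from the left via $(B(r)V(r,s))$ rather than from the right — with the iteration run in the mirror-image order; the decreasing-index chain now sits between $\alpha''$ and $\alpha$ near the right endpoint, which is why the admissible range becomes $t-s < \min\{T(\alpha',\alpha),T(\alpha',\alpha'')\}$. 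I expect the main obstacle to be precisely the index bookkeeping in the $n$-fold iteration — keeping track of which subinterval of $(\alpha',\alpha)$ is consumed at each of the $n$ steps so that the product of the $n$ bounds $\frac{nM(\alpha)}{\alpha-\alpha'}$ (rather than something worse, like $\prod_j$ with shrinking gaps) is what appears, and matching the resulting constant to the stated $T(\alpha',\alpha) = \frac{\alpha-\alpha'}{2KeM(\alpha)}$; everything else is routine dominated-convergence and geometric-series manipulation.
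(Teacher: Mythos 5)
Your proposal is correct and follows essentially the same route as the paper: the Ovcyannikov/Dyson series $W=\sum_{n\ge 0}W_n$ with $W_{n+1}=\int_s^t V(t,r)B(r)W_n(r,s)\,dr$, the splitting of $(\alpha',\alpha)$ into $2n$ subintervals (whence the factor $2$ in $T(\alpha',\alpha)=\frac{\alpha-\alpha'}{2KeM(\alpha)}$), the bound $\Vert W_n(t,s)\Vert_{\alpha\alpha'}\le K\left(\frac{t-s}{T(\alpha',\alpha)}\right)^n$, term-by-term integration for (c), and iteration of the homogeneous equation for uniqueness. One minor correction: part (d) does not use the second identity in (A3) (which is not among the hypotheses of this theorem) — it follows, as you also indicate, from the purely combinatorial fact that the same iterates satisfy the mirror recursion $W_{n+1}(t,s)=\int_s^t W_n(t,r)B(r)V(r,s)\,dr$.
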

Recall \eqref{LINEAR:00} and \eqref{LINEAR:01}.
In order to simply the notation in the proof, we omit the subscripts $\alpha \alpha'$, whenever no confusion may arise.
\begin{proof}
 Define a sequence of operators $(W_n(t,s))_{0 \leq s \leq t} \subset L(\E)$ by $W_0(t,s) = V(t,s)$ and 
 \begin{align}\label{LINEAR:18}
  W_{n+1}(t,s) := \int \limits_{s}^{t}V(t,r)B(r)W_n(r,s) dr,
 \end{align}
 where the integrals are, for all $\alpha' < \alpha$, defined in the strong topology on $L(\E_{\alpha}, \E_{\alpha'})$,
 while the composition $V(t,r)B(r)W_n(r,s)$ is defined by \eqref{LINEAR:01}. 
 Then, for any $\alpha' < \alpha$, $n \geq 0$ and $k \in \E_{\alpha}$, the function $W_n(t,s)k$ is continuous in $\E_{\alpha'}$ and satisfies
 \[
  \Vert W_n(t,s)k\Vert_{\alpha'} \leq \Vert k \Vert_{\alpha} \left(\frac{t-s}{T(\alpha',\alpha)}\right)^nK.
 \]
 Indeed, for $n = 0$ this certainly holds true due to (A4).
 Consider $n \geq 1$, set $\alpha_j := \alpha' + j \frac{\alpha - \alpha'}{2n}$, $j \in \{0, \dots, 2n \}$ and for $s \leq t_1 \leq \dots \leq t_{n} \leq t$ let
 \[
  Q_n(t,t_1,\dots, t_n,s) := V(t,t_1)B(t_1)\cdots V(t_{2n-2}, t_{2n-1} ) B(t_{2n-1}) V(t_{2n},s) \in L(\E_{\alpha}, \E_{\alpha'}),
 \]
 where the composition is again defined by \eqref{LINEAR:01}.
 Using \eqref{LINEAR:33} and (A4), we obtain by \eqref{LINEAR:18}
 \begin{align*}
  \Vert W_n(t,s)k\Vert_{\alpha'} &\leq \int \limits_{s}^{t}\cdots \int \limits_{s}^{t_{n-1}}\Vert Q_n(t,t_1,\dots, t_n,s)k \Vert_{\alpha'}dt_n \dots dt_1
  \\ &\leq K^{n} \Vert k \Vert_{\alpha}\frac{(2n)^n}{(\alpha - \alpha')^n} \int \limits_{s}^{t}\cdots \int \limits_{s}^{t_{n-1}}\prod \limits_{j=0}^{n-1}M(\alpha_{2j+1}) dt_n \dots dt_1
  \\ &\leq \Vert k \Vert_{\alpha} \frac{(t-s)^n}{n!} \frac{(2M(\alpha)n K)^n}{(\alpha-\alpha')^n}
  \\ &\leq \| k \|_{\alpha} \left( \frac{t-s}{T(\alpha', \alpha)} \right)^n,
 \end{align*}
 where we have used $\frac{1}{n!} \leq \left( \frac{e}{n}\right)^n$, $n \geq 1$. 
 Hence $\sum_{n=0}^{\infty}W_n(t,s)k =: W(t,s)k$ converges locally uniformly in $\E_{\alpha'}$
 for all $0 \leq t - s  < T(\alpha', \alpha)$, i.e. $(t,s) \longmapsto W(t,s)k \in \E_{\alpha'}$ is continuous and satisfies
 \begin{align*}
  \Vert W(t,s)k \Vert_{\alpha'} &\leq \Vert k \Vert_{\alpha} \sum \limits_{n=0}^{\infty} \left( \frac{t-s}{T(\alpha', \alpha)}\right)^n K
  = \Vert k \Vert_{\alpha}\frac{T(\alpha', \alpha)}{T(\alpha',\alpha) - (t-s)}K.
 \end{align*}
 This proves property (a). Property (b) is a direct consequence of 
 \[
   (W_n(t,s))_{\alpha' \alpha} = i_{\alpha'' \alpha'}(W_n(t,s))_{\alpha \alpha''} = (W_n(t,s))_{\alpha'' \alpha'}i_{\alpha \alpha''}.
 \]
 Let us prove property (c). 
 Take $\alpha_j := \alpha' + j\frac{\alpha - \alpha'}{2(n+1)}$, $j \in \{0, \dots, 2(n+1)\}$. Then, for $s \leq r \leq t$, we obtain
 \begin{align*}
  \Vert V(t,r)B(r)W_n(r,s)k \Vert_{\alpha'} 
  &\leq  (KM(\alpha))^{n+1} \frac{2(n+1)}{\alpha - \alpha'} \frac{(t-s)^n}{n!} \frac{(2(n+1))^n}{(\alpha - \alpha')^n}\Vert k \Vert_{\alpha}
  \\ &\leq  \Vert k \Vert_{\alpha} \frac{4e K M(\alpha)}{\alpha - \alpha'} \left( \frac{t-s}{T(\alpha', \alpha)}\right)^{n} n.
 \end{align*}
 Hence the series $\sum_{n = 0}^{\infty}V(t,r) B(r)W_n(r,s)k \in \E_{\alpha'}$ is locally uniformly convergent in $s \leq r \leq t$,
 provided one has $0 \leq t - s < T(\alpha', \alpha)$. Take $\alpha' < \alpha'' < \alpha$ with $0 \leq t-s < \min\{ T(\alpha'', \alpha'), T(\alpha', \alpha)\}$.
 Note that for given $t - s < T(\alpha', \alpha)$ such $\alpha''$ always exists.
 Then $W(r,s)k = \sum_{n =0}^{\infty}W_n(r,s)k$ converges locally uniformly in $\E_{\alpha''}$ and hence is also continuous in $r$.
 Since $V(t,r)B(r) \in L(\E_{\alpha''}, \E_{\alpha'})$ is strongly continuous, it follows that
 \begin{align*}
  W_{\alpha \alpha'}(t,s)k &= V_{\alpha \alpha'}(t,s)k + \sum \limits_{n=1}^{\infty}\int \limits_{s}^{t}(V(t,r)B(r))_{\alpha'' \alpha'}(W_{n-1}(r,s))_{\alpha \alpha''} kdr
  \\ &= V_{\alpha \alpha'}(t,s)k + \int \limits_{s}^{t}(V(t,r)B(r))_{\alpha'' \alpha'}W_{\alpha \alpha''}(r,s)k dr,
 \end{align*}
 which implies that $W(t,s)$ satisfies the desired integral equation \eqref{EQ:06}. Let us prove that $W(t,s)$ is unique with such property.
 Take another family of operators $\widetilde{W}(t,s)$ satisfying (a), (b) and \eqref{EQ:06}.
 Let $\alpha' < \alpha'' < \alpha$ with $0 \leq t-s < \min\{ T(\alpha'', \alpha), T(\alpha', \alpha)\}$
 and set $\alpha_j = \alpha' + j \frac{\alpha'' - \alpha'}{2n}$, $j \in \{0,\dots, 2n\}$ where $n \geq 1$.
 Observe that, for all $s \leq t_n \leq t_{n-1} \leq \dots \leq t_1 \leq t$, one has
 \begin{align*}
  \widetilde{Q}_n(t,t_1, \dots, t_n) := V(t,t_1)B(t_1)\cdots V(t_{n-1}, t_n)B(t_n) \in L(\E_{\alpha''}, \E_{\alpha'}).
 \end{align*}
 Then, as before, we obtain
 \begin{align*}
  \Vert \widetilde{Q}_n(t,t_1, \dots, t_n)\Vert_{\alpha' \alpha''} \leq K^{n} \frac{M(\alpha'')^n(2n)^n}{(\alpha'' - \alpha')^n}
 \end{align*}
 and hence we deduce that, for all $k \in \E_{\alpha}$,
 \begin{align*}
  &\ \| W_{\alpha \alpha'}(t,s)k - \widetilde{W}_{\alpha \alpha'}(t,s)k \|_{\alpha''} 
 \\  &\leq \int \limits_s^t \cdots \int \limits_{s}^{t_{n-1}}\| \widetilde{Q}_n(t,t_1,\dots, t_n) \|_{\alpha'' \alpha'} \| W_{\alpha \alpha''}(t_n,s)k - \widetilde{W}_{\alpha \alpha''}(t_n,s)k \|_{\alpha''} dt_n \dots dt_1
 \\ &\leq C(\alpha, \alpha'', k)  K^{n} \frac{M(\alpha'')^n(2n)^n}{(\alpha'' - \alpha')^n} \frac{(t-s)^n}{n!},
 \end{align*}
 where $C(\alpha, \alpha'',k) = \sup_{r \in [s,t]}  \| W_{\alpha \alpha''}(r,s) - \widetilde{W}_{\alpha \alpha''}(r,s) \|_{\alpha \alpha''}$
 is finite by strong continuity and the uniform boundedness principle.
 Since the right-hand side tends to zero as $n \to \infty$, we conclude the assertion. 
 For the last property (d), observe that the sequence $(W_n(t,s)k)_{n \in \N}$ also satisfies the relation
 \[
  W_{n+1}(t,s)k = \int \limits_{s}^{t}W_{n}(t,r)B(r)V(r,s)k dr.
 \]
 A repetition of above arguments proves (d).
\end{proof}
If we suppose, in addition, that also (A1) and (A3) are satisfied, then $W(t,s)$ is continuously differentiable.
Consequently, we are able to solve the corresponding forward evolution equation given in the next definition.
\begin{Definition}
 Fix $\alpha_* < \alpha' < \alpha$, $k \in \E_{\alpha}$ and $s \geq 0$. 
 A solution on $[s,s+T]$, $T > 0$, to the forward evolution equation
  \begin{align}\label{EQ:04}
   \frac{d}{dt} i_{\alpha' \alpha''}u(t) = (A_{\alpha' \alpha''}(t) + B_{\alpha' \alpha''}(t))u(t), \ \ u(s) = k, \ \ t \in [s,s + T]
  \end{align}
  is, by definition, a function $u \in C([s, s+T]; \E_{\alpha'})$, such that $i_{\alpha' \alpha''}u \in C^1([s,s+T]; \E_{\alpha''})$ and \eqref{EQ:04} 
  holds for all $\alpha'' \in (\alpha_*, \alpha')$.
\end{Definition}
Note that the right-hand side in \eqref{EQ:04} is independent of the particular choice of $\alpha'' \in (\alpha_*, \alpha')$.
Concerning existence and uniqueness for \eqref{EQ:04} we obtain the following.
\begin{Corollary}\label{CORR:00}
 Let $(A(t))_{t \geq 0}$ be given as in (A1) and let $(V(t,s))_{t \geq s \geq 0}$ be such that (A2) -- (A4) holds.
 Suppose that $(B(t))_{t \geq 0}$ satisfies (B1), (B2). Let $W(t,s)$ be given by Theorem \ref{THEOREM:00}.
 Then the following assertions hold
 \begin{enumerate}
  \item[(a)] For all $\alpha' < \alpha'' < \alpha$ and $k \in \E_{\alpha}$,
  \[ 
   [s, s + \min\{ T(\alpha', \alpha), T(\alpha'', \alpha)\} ) \ni t \longmapsto W_{\alpha \alpha'}(t,s)k \in \E_{\alpha'}
  \]
  is continuously differentiable with
  \begin{align*}
   \frac{\partial}{\partial t}W_{\alpha \alpha'}(t,s)k = (A_{\alpha'' \alpha'}(t) + B_{\alpha'' \alpha'}(t))W_{\alpha \alpha''}(t,s)k.
  \end{align*}
  \item[(b)] For all $\alpha' < \alpha'' < \alpha$ and $k \in \E_{\alpha}$,
  \[
   (t - \min\{ T(\alpha', \alpha), T(\alpha', \alpha'')\}, t] \ni s \longmapsto W_{\alpha \alpha'}(t,s)k \in \E_{\alpha'}
  \]
  is continuously differentiable with
  \begin{align} \label{LINEAR:13}
 \frac{\partial}{\partial s}W_{\alpha \alpha'}(t,s)k = - W_{\alpha'' \alpha'}(t,s)(A_{\alpha \alpha''}(s) + B_{\alpha \alpha''}(s))k.
  \end{align}
  \item[(c)] Fix $s \geq 0$, $\alpha' < \alpha$ and $k \in \E_{\alpha}$.
  Suppose that there exists $T > 0$ and a function $u \in C([s,s+T]; \E_{\alpha'})$ which is a solution to \eqref{EQ:04}.
  Then $u(t) = W_{\alpha \alpha'}(t,s)k$ holds for any $t$ with $s \leq t < s + \min\{T, T(\alpha', \alpha)\}$.
\end{enumerate}
\end{Corollary}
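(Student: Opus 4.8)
The plan is to read off (a) and (b) by differentiating the two Volterra equations of Theorem~\ref{THEOREM:00}, and to obtain (c) from a mild‑equation representation of an arbitrary solution together with an Ovcyannikov iteration and a connectedness argument.

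\textit{On (a).} I would start from \eqref{EQ:06} with a fixed chain $\alpha'<\nu<\alpha''<\alpha$ and differentiate in $t$. The integrand $r\mapsto(V(t,r)B(r))_{\alpha''\alpha'}W_{\alpha\alpha''}(r,s)k$ is jointly continuous by (A1), (B1) and Theorem~\ref{THEOREM:00}.(a), and, by Lemma~\ref{LINEARLEMMA:00}.(a) applied to $V$, it is $t$‑differentiable with $\partial_t(V(t,r)B(r))_{\alpha''\alpha'}=A_{\nu\alpha'}(t)(V(t,r)B(r))_{\alpha''\nu}$; hence Leibniz's rule applies. The boundary term at $r=t$ equals, via $V(t,t)=i$ and \eqref{LINEAR:01}, exactly $B_{\alpha''\alpha'}(t)W_{\alpha\alpha''}(t,s)k$, while the term from differentiating $V(t,r)$ is $A_{\nu\alpha'}(t)$ times an integral which, by \eqref{EQ:06} read with bottom index $\nu$, equals $A_{\nu\alpha'}(t)\bigl(W_{\alpha\nu}(t,s)k-V_{\alpha\nu}(t,s)k\bigr)$. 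Adding $\partial_tV_{\alpha\alpha'}(t,s)k=A_{\nu\alpha'}(t)V_{\alpha\nu}(t,s)k$ from Lemma~\ref{LINEARLEMMA:00}.(a), the two $V$‑terms cancel; since the composition \eqref{LINEAR:01} is independent of the middle index, $A_{\nu\alpha'}(t)W_{\alpha\nu}(t,s)k=A_{\alpha''\alpha'}(t)W_{\alpha\alpha''}(t,s)k$, which is the asserted identity, and continuity of the derivative follows again from (A1), (B1) and Theorem~\ref{THEOREM:00}.(a). Part (b) is the mirror statement: differentiate the integral equation of Theorem~\ref{THEOREM:00}.(d) in $s$; the boundary term at $r=s$ gives $-W_{\alpha''\alpha'}(t,s)B_{\alpha\alpha''}(s)k$, and differentiating $V(r,s)$ inside (Lemma~\ref{LINEARLEMMA:00}.(b)) yields an integral which by Theorem~\ref{THEOREM:00}.(d) reconstitutes $-W_{\alpha''\alpha'}(t,s)A_{\alpha\alpha''}(s)k$, the $V$‑terms again cancelling.

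\textit{On (c).} First record a mild (Duhamel) equation relative to $V$ satisfied by \emph{any} $u$ solving \eqref{EQ:04}: fix $\gamma\in(\alpha_*,\alpha')$ and $\gamma<\mu<\alpha'$, write $V_{\alpha'\gamma}(t,r)u(r)=V_{\mu\gamma}(t,r)i_{\alpha'\mu}u(r)$ by \eqref{LINEAR:00}, and note that $r\mapsto V_{\mu\gamma}(t,r)i_{\alpha'\mu}u(r)$ is $C^1$ on $[s,t]$ (use Lemma~\ref{LINEARLEMMA:00}.(b) for $V$, the fact that $i_{\alpha'\mu}u\in C^1$ with derivative $(A_{\alpha'\mu}(r)+B_{\alpha'\mu}(r))u(r)$, and a product rule legitimate by strong continuity plus uniform boundedness). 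By \eqref{LINEAR:00}--\eqref{LINEAR:01} its $r$‑derivative collapses to $(V(t,r)B(r))_{\alpha'\gamma}u(r)$, and integrating over $[s,t]$ with $V(t,t)=i$, $u(s)=k$ yields
\[
 i_{\alpha'\gamma}u(t)=V_{\alpha\gamma}(t,s)k+\int_{s}^{t}(V(t,r)B(r))_{\alpha'\gamma}u(r)\,dr,\qquad \gamma\in(\alpha_*,\alpha').
\]
By Theorem~\ref{THEOREM:00}.(b),(c), $i_{\alpha'\gamma}W_{\alpha\alpha'}(t,s)k=W_{\alpha\gamma}(t,s)k$ satisfies the same equation for $t-s<T(\alpha',\alpha)$; subtracting, $d(r):=u(r)-W_{\alpha\alpha'}(r,s)k$ obeys the homogeneous equation $i_{\alpha'\gamma}d(t)=\int_s^t(V(t,r)B(r))_{\alpha'\gamma}d(r)\,dr$. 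Iterating this $n$ times along an equidistant ladder from $\gamma$ to $\alpha'$, splitting each composition at its midpoint and bounding it by $2nKM(\alpha')/(\alpha'-\gamma)$ via (A4) and \eqref{LINEAR:33}, exactly as in the proof of Theorem~\ref{THEOREM:00}, one gets
\[
 \|i_{\alpha'\gamma}d(t)\|_{\gamma}\le\Bigl(\sup_{r\in[s,t]}\|d(r)\|_{\alpha'}\Bigr)\Bigl(\frac{2eKM(\alpha')(t-s)}{\alpha'-\gamma}\Bigr)^{n}\longrightarrow 0\quad(n\to\infty)
\]
whenever $t-s<(\alpha'-\gamma)/(2eKM(\alpha'))$; taking $\gamma$ close to $\alpha_*$ and using injectivity of the embeddings, $u(t)=W_{\alpha\alpha'}(t,s)k$ for all $t-s$ below a threshold $\delta_0>0$ depending only on $\alpha',\alpha_*,K,M(\alpha')$. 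The same computation, with $s$ and $k$ replaced by an arbitrary time $\sigma$ and an arbitrary datum in $\E_{\alpha'}$, shows that two solutions of \eqref{EQ:04} which agree at $\sigma$ agree on $[\sigma,\sigma+\delta_0)$.

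To pass to all $t<s+\min\{T,T(\alpha',\alpha)\}$, observe that by (a) and \eqref{LINEAR:00}--\eqref{LINEAR:01} the map $t\mapsto W_{\alpha\alpha'}(t,s)k$ is itself a solution of \eqref{EQ:04} on $[s,s+T(\alpha',\alpha))$, hence, restricted to $[t_0,\,\cdot\,)$, the solution with datum $W_{\alpha\alpha'}(t_0,s)k$ at $t_0$. Therefore $S:=\{t\in[s,s+\min\{T,T(\alpha',\alpha)\}):u(t)=W_{\alpha\alpha'}(t,s)k\}$ is closed (continuity of $u$ and of $W_{\alpha\alpha'}(\cdot,s)k$), contains $[s,s+\delta_0)$, and is forward‑open: if $t_0\in S$ then $u$ and $W_{\alpha\alpha'}(\cdot,s)k$ are two solutions of \eqref{EQ:04} with the common value $u(t_0)$ at $t_0$, so they coincide on $[t_0,t_0+\eta)$ for some $\eta>0$ by the local uniqueness just proved. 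Connectedness of the interval then forces $S$ to be all of it, which is (c). I expect this last passage — from the local uniqueness, which holds only for $t-s<(\alpha'-\alpha_*)/(2eKM(\alpha'))$ because $B$ genuinely loses a full index, to the interval $[s,s+\min\{T,T(\alpha',\alpha)\})$ — to be the only delicate point; (a) and (b) are routine differentiations of \eqref{EQ:06} and of the equation in Theorem~\ref{THEOREM:00}.(d).
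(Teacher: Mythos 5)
Your proof is correct and follows essentially the same route as the paper: (a) and (b) are the differentiations of the two Volterra equations from Theorem \ref{THEOREM:00} that the paper simply cites, and (c) is the same homogeneous Volterra iteration for the difference $w=u-W(\cdot,s)k$ with the same Ovcyannikov bound $\bigl(2eKM(\alpha')(t-s)/(\alpha'-\gamma)\bigr)^n$. The only cosmetic difference is the globalization step, where you use a closed/forward-open connectedness argument while the paper restarts explicitly at $s+\tfrac12 T_0(\alpha',\alpha)$ with a uniform step size; these are the same continuation idea.
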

\begin{proof}
 Assertions (a) and (b) are direct consequences of Theorem \ref{THEOREM:00}.(c) and Theorem \ref{THEOREM:00}.(d) 
 combined with Lemma \ref{LINEARLEMMA:00}. Let us prove (c).

 Define $w(t) := W_{\alpha \alpha'}(t,s)k - u(t)$, where $s \leq t < s + \min\{T, T(\alpha', \alpha)\}$. 
 Then $w(s) = 0$ and it suffices to show that $w = 0$. 
 Since $w$ solves \eqref{EQ:04} it follows that $w$ satisfies,
 for any $t$ with $s \leq t < s + \min\{T, T(\alpha',\alpha)\}$ and $\alpha'' \in (\alpha_*, \alpha')$,
 \[
  i_{\alpha' \alpha''}w(t) = \int \limits_{s}^{t}(V(t,r)B(r))_{\alpha'' \alpha'}w(r) dr.
 \]
 Iterating this equality yields
 \[
  i_{\alpha' \alpha''}w(t) = \int \limits_{s}^{t}\dots \int \limits_{s}^{t_{n-1}}\widetilde{Q}_n(t,t_1,\dots, t_n,s)_{\alpha'' \alpha'}w(t_n)dt_n \dots, dt_1,
 \]
 where $\widetilde{Q}_n(t,t_1, \dots, t_n) := V(t,t_1)B(t_1)\cdots V(t_{n-1}, t_n)B(t_n) \in L(\E_{\alpha'}, \E_{\alpha''})$.
 In order to estimate this integral, we let $\alpha_j := \alpha'' + j \frac{\alpha' - \alpha''}{2n}$, $j \in \{0, \dots, 2n\}$. Then
 \begin{align*}
  \Vert \widetilde{Q}_n(t,t_1, \dots, t_n)\Vert_{\alpha' \alpha''} \leq K^{n} \frac{M(\alpha')^n(2n)^n}{(\alpha' - \alpha'')^n}, \ \ n \geq 1.
 \end{align*}
 Letting $C_{\alpha'} := \sup_{r \in [s,t]}\ \Vert w(r) \Vert_{\alpha'} < \infty$ 
 and using $\Vert w(t_n) \Vert_{\alpha'} \leq C_{\alpha'}$ we obtain
 \begin{align*}
  \Vert w(t)\Vert_{\alpha''} \leq C_{\alpha'} \left( \frac{2eK M(\alpha')(t-s)}{\alpha' - \alpha''}\right)^n.
 \end{align*}
 If, in addition, $0 \leq t - s < T(\alpha'', \alpha')$, then the right-hand side tends to zero as $n \to \infty$ from which we deduce $w(t) = 0$ in $\E_{\alpha''}$ 
 and thus also in $\E_{\alpha}$ for all $t$ satisfying
 \[
  s \leq t < s + \min \left\{ T, T(\alpha', \alpha),  T(\alpha'', \alpha') \right\}.
 \]
 Setting $\alpha'' = \frac{\alpha' + \alpha_*}{2} \in (\alpha_*, \alpha')$ shows that, 
 for any $\alpha' < \alpha$, $k \in \E_{\alpha}$ and $s \geq 0$, equation \eqref{EQ:04} has a unique solution on $[s, s + \frac{1}{2}T_0(\alpha', \alpha)]$ 
 where $T_0(\alpha', \alpha) := \min \{ T(\alpha', \alpha), \frac{\alpha' - \alpha_*}{4eKM(\alpha')}, T \}$. 
 
 Changing $s$ to $s + \frac{1}{2}T_0(\alpha', \alpha)$ and iterating this procedure yields the assertion. 
 Note that such an iteration is possible since
 the new initial condition satisfies $w(s + \frac{1}{2}T_0(\alpha',\alpha)) = 0 \in \E_{\alpha}$.
\end{proof}
\begin{Remark}
 The proofs show that, if $t \longmapsto B_{\alpha \alpha'}(t) \in L(\E_{\alpha}, \E_{\alpha'})$ is continuous in the operator norm for any $\alpha' < \alpha$, 
 then $W_{\alpha \alpha'}(t,s)$ is continuously differentiable in the operator norm on $L(\E_{\alpha}, \E_{\alpha'})$.
\end{Remark}
As a consequence of the uniqueness results we see that $W(t,s)$ satisfies the forward evolution property.
\begin{Corollary}
 Let $(A(t))_{t \geq 0}$ be given as in (A1) and let $(V(t,s))_{t \geq s \geq 0}$ be such that (A2) -- (A4) holds.
 Suppose that $(B(t))_{t \geq 0}$ satisfies (B1), (B2). Let $W(t,s)$ be the evolution system given by Theorem \ref{THEOREM:00}.
 Then, for all $\alpha' < \alpha'' < \alpha$ and $0 \leq s \leq r \leq t$ satisfying the relations 
 \[
  t - s < T(\alpha', \alpha), \qquad r-s < T(\alpha'', \alpha), \qquad t-r < T(\alpha', \alpha''),
 \]
 it holds that $W_{\alpha \alpha'}(t,s) = W_{\alpha'' \alpha'}(t,r) W_{\alpha \alpha''}(r,s)$.
\end{Corollary}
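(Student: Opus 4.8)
The plan is to reduce the claimed identity to the uniqueness statement in Theorem~\ref{THEOREM:00}.(c). Fix $\alpha' < \alpha'' < \alpha$, fix $r$ and $s$ with $r - s < T(\alpha'',\alpha)$, and view $t \longmapsto W_{\alpha''\alpha'}(t,r)W_{\alpha\alpha''}(r,s)$ as a candidate solution, on the relevant $t$-interval, to the same integral equation that characterizes $t \longmapsto W_{\alpha\alpha'}(t,s)$ with initial value $W_{\alpha\alpha''}(r,s)k \in \E_{\alpha''}$ at $t = r$. Concretely, I would introduce an auxiliary intermediate index $\alpha'''$ with $\alpha' < \alpha''' < \alpha''$ (chosen, as in the proof of Theorem~\ref{THEOREM:00}.(c), small enough that $t - r < T(\alpha',\alpha''')$ as well), apply \eqref{EQ:06} to the outer factor $W_{\alpha''\alpha'}(t,r)$ acting on the vector $h := W_{\alpha\alpha''}(r,s)k$, and obtain
\begin{align*}
 W_{\alpha''\alpha'}(t,r)h = V_{\alpha''\alpha'}(t,r)h + \int\limits_r^t (V(t,\rho)B(\rho))_{\alpha'''\alpha'}W_{\alpha''\alpha'''}(\rho,r)h\, d\rho .
\end{align*}
Then I would substitute $h = W_{\alpha\alpha''}(r,s)k$ back in, using property~(b) of Theorem~\ref{THEOREM:00} (the consistency relations $W_{\alpha''\alpha'''}(\rho,r)i_{\alpha\alpha''} = W_{\alpha\alpha'''}(\rho,r)$ etc.) together with the composition rule \eqref{LINEAR:01}, to rewrite $W_{\alpha''\alpha'''}(\rho,r)W_{\alpha\alpha''}(r,s)$ as an operator from $\E_\alpha$ to $\E_{\alpha'}$ in a form amenable to the integral equation.

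The key computational step is then handling the term $V_{\alpha''\alpha'}(t,r)W_{\alpha\alpha''}(r,s)k$: I would expand $W_{\alpha\alpha''}(r,s)k$ itself via \eqref{EQ:06} at the pair $(r,s)$, which produces $V_{\alpha''\alpha'}(t,r)V_{\alpha\alpha''}(r,s)k = V_{\alpha\alpha'}(t,s)k$ (by the forward evolution property of $V$ recorded in the Remark after Lemma~\ref{LINEARLEMMA:00}) plus a double integral term $\int_r^t \cdots$ of the form $V(t,r)\!\int_s^r (V(r,\sigma)B(\sigma))W_{\alpha\alpha''}(\sigma,s)k\,d\sigma$, which collapses to $\int_s^r (V(t,\sigma)B(\sigma))_{\cdot}W_{\alpha\alpha''}(\sigma,s)k\,d\sigma$ again using the evolution property of $V$. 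Assembling these pieces, the candidate $\Phi(t) := W_{\alpha''\alpha'}(t,r)W_{\alpha\alpha''}(r,s)k$ should be seen to satisfy
\begin{align*}
 \Phi(t) = V_{\alpha\alpha'}(t,s)k + \int\limits_s^t (V(t,\rho)B(\rho))_{\alpha'''\alpha'}\,\widetilde\Phi(\rho)\, d\rho,
\end{align*}
where $\widetilde\Phi(\rho)$ is the natural $\E_{\alpha'''}$-valued analogue of $\Phi(\rho)$ for $\rho \le r$ equal to $W_{\alpha\alpha'''}(\rho,s)k$ and for $\rho \ge r$ equal to $W_{\alpha''\alpha'''}(\rho,r)W_{\alpha\alpha''}(r,s)k$; continuity in $\rho$ across $\rho = r$ follows from Theorem~\ref{THEOREM:00}.(a),(b). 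Since $W_{\alpha\alpha'}(\cdot,s)k$ solves the identical integral equation with the identical data (split the integral in \eqref{EQ:06} at $\rho = r$ and use the evolution property on $[s,r]$), the uniqueness clause of Theorem~\ref{THEOREM:00}.(c) forces $\Phi(t) = W_{\alpha\alpha'}(t,s)k$ for all admissible $t$; since $k \in \E_\alpha$ was arbitrary, the operator identity follows.

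The main obstacle is purely bookkeeping: keeping the four indices $\alpha' < \alpha''' < \alpha'' < \alpha$ straight while repeatedly invoking the two-sided consistency relations of Theorem~\ref{THEOREM:00}.(b) and the definition \eqref{LINEAR:01} of composition — in particular checking that every integrand genuinely lands in $L(\E_\alpha,\E_{\alpha'})$ and that all time-intervals involved ($t-s$, $r-s$, $t-r$, and the $T(\cdot,\cdot)$ thresholds for the auxiliary index $\alpha'''$) are simultaneously compatible, which is exactly why the hypotheses $t-s<T(\alpha',\alpha)$, $r-s<T(\alpha'',\alpha)$, $t-r<T(\alpha',\alpha'')$ are stated. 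The analytic content — absolute convergence of the series, interchange of sum and integral, the evolution property of $V$ — is already available from Theorem~\ref{THEOREM:00} and the earlier Remark, so no new estimates are needed. As an alternative, one could instead verify that both sides solve the backward integral equation \eqref{EQ:06}'s companion from Theorem~\ref{THEOREM:00}.(d) in the variable $s$ and invoke that uniqueness clause; the two routes are symmetric and I would present whichever makes the index juggling shortest.
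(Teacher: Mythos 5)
Your argument is workable and the conclusion is right, but the paper does not actually prove this corollary at the level of the integral equation: it states it as an immediate consequence of Corollary \ref{CORR:00}, and the intended one\--line proof is the differential one. Namely, fix $k \in \E_{\alpha}$ and set $h := W_{\alpha \alpha''}(r,s)k \in \E_{\alpha''}$; by Corollary \ref{CORR:00}.(a) both $t \longmapsto W_{\alpha'' \alpha'}(t,r)h$ and $t \longmapsto W_{\alpha \alpha'}(t,s)k$ are solutions of the forward Cauchy problem \eqref{EQ:04} on $[r, r + T(\alpha', \alpha''))$ with the same initial datum $h$ at time $r$ (using Theorem \ref{THEOREM:00}.(b) to identify $W_{\alpha\alpha'}(r,s)k$ with $i_{\alpha''\alpha'}h$), so the uniqueness statement of Corollary \ref{CORR:00}.(c), applied with initial time $r$ and initial space $\E_{\alpha''}$, gives the identity; the three time restrictions are exactly what is needed for all objects to be defined. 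Your route through \eqref{EQ:06} buys nothing here and costs extra bookkeeping, and it has one loose joint: the uniqueness clause of Theorem \ref{THEOREM:00}.(c) is stated for \emph{families of operators} satisfying (a), (b) and \eqref{EQ:06} for all index pairs, not for a single $\E_{\alpha'}$\--valued trajectory satisfying a fixed\--index integral identity with a piecewise\--defined integrand $\widetilde\Phi$. So you cannot literally ``invoke'' that clause; you must re\-run the $n$\--fold iteration for the difference $w(t) = \Phi(t) - W_{\alpha\alpha'}(t,s)k$, which vanishes on $[s,r]$ and satisfies $w(t) = \int_r^t (V(t,\rho)B(\rho))_{\alpha'''\alpha'}\,w_{\alpha'''}(\rho)\,d\rho$ with a descending chain of intermediate indices in $(\alpha', \alpha'')$. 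The estimate is identical to the one already written out in the uniqueness parts of Theorem \ref{THEOREM:00}.(c) and Corollary \ref{CORR:00}.(c), so this is a presentational rather than a mathematical gap, but it should be made explicit if you take this route.
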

\begin{Remark}
 If $B(t) = 0$ for all $t \geq 0$, then $M(\alpha) = 0$ and hence $T(\alpha', \alpha) = + \infty$.
 Consequently $W_{\alpha \alpha'}(t,s) = V_{\alpha \alpha'}(t,s)$ is defined for all $t \geq s \geq 0$.
\end{Remark}
Below we provide one sufficient condition under which $W(t,s)$ is defined for all $t \geq s \geq 0$.
\begin{Remark}\label{LINEARTH:06}
 Suppose that the same conditions as for Theorem \ref{THEOREM:00} are satisfied, and assume that
 \[
 \sup\limits_{\alpha > \alpha_*}  M(\alpha) =: M^* < \infty.
 \]
 Let $W(t,s)$ be the evolution system given by Theorem \ref{THEOREM:00} with $T(\alpha', \alpha) = \frac{\alpha - \alpha'}{2eKM(\alpha)}$,
 and let $W^0(t,s)$ be the evolution system given by Theorem \ref{THEOREM:00} with $T_0(\alpha', \alpha) = \frac{\alpha - \alpha'}{2eK M^*}$.
 By uniqueness we obtain, for all $\alpha' < \alpha$,
 \[
  W_{\alpha \alpha'}(t,s) = W_{\alpha \alpha'}^0(t,s), \ \ 0 \leq t - s < T_0(\alpha', \alpha) \leq T(\alpha', \alpha).
 \]
 Moreover, for each $T > 0$ and $\alpha' > \alpha_*$ there exists $\alpha = \alpha(T) > \alpha'$ such that $T < T_0(\alpha', \alpha)$,
 i.e. $W_{\alpha \alpha'}(t,s)$ is defined on $[0,T]$.
 This operator is clearly independent of the particular choice of $\alpha(T)$ as long as $T < T_0(\alpha', \alpha)$,
 see also Theorem \ref{THEOREM:00}.(b).
\end{Remark}

\subsection{Stability for the forward evolution system}
In this section we study stability for the constructed evolution systems.
Consider, for each $n \in \N$, the following set of conditions
\begin{enumerate}
 \item[(S1)] Let $(A^{(n)}(t))_{t \geq 0} \subset L(\E)$ be such that
 $t \longmapsto A_{\alpha \alpha'}^{(n)}(t) \in L(\E_{\alpha}, \E_{\alpha'})$, $\alpha' < \alpha$, is continuous in the operator topology. 
 \item[(S2)] Let $(V^{(n)}(t,s))_{t \geq s \geq 0} \subset L(\E)$ satisfy (A2) and (A3) for the operators $(A^{(n)}(t))_{t \geq 0}$. 
 Moreover, suppose that there exists a constant $K \geq 1$ such that
 \begin{align}\label{LINEAR:15}
  \Vert V^{(n)}_{\alpha \alpha'}(t,s)\Vert_{\alpha \alpha'} \leq K, \ \ 0 \leq s \leq t, \ \alpha' < \alpha, \ \ n \in \N.
 \end{align}
 \item[(S3)] Let $(B^{(n)}(t))_{t \geq 0} \subset L(\E)$ be such that
 $t \longmapsto B_{\alpha \alpha'}^{(n)}(t) \in L(\E_{\alpha}, \E_{\alpha'})$, $\alpha' < \alpha$, is strongly continuous. 
 Moreover, suppose that there exists an increasing continuous function 
 $M: (\alpha_*, \infty) \longrightarrow [0,\infty)$ independent of $n$ such that
 \begin{align}\label{LINEAR:16}
  \Vert B^{(n)}_{\alpha \alpha'}(t)\Vert_{\alpha \alpha'} \leq \frac{M(\alpha)}{\alpha - \alpha'}, \ \ \alpha' < \alpha, \ \ t \geq 0.
 \end{align}
\end{enumerate}
Then we obtain the following.
\begin{Theorem}\label{LINEARTH:05}
 Suppose that (S1) -- (S3) are satisfied. 
 Let $(A(t))_{t \geq 0}$ and $(B(t))_{t \geq 0}$ be two families of operators in $L(\E)$ such that, for any $T > 0$ and $\alpha' < \alpha$,
 \begin{align}\label{LINEAR:20}
  \lim \limits_{n \to \infty} \sup \limits_{t \in [0,T]} \Vert B^{(n)}_{\alpha \alpha'}(t) - B_{\alpha \alpha'}(t)\Vert_{\alpha \alpha'} = 0
 \end{align}
 and
 \begin{align}\label{LINEAR:17}
  \lim \limits_{n \to \infty} \sup \limits_{t \in [0,T]} \Vert A^{(n)}_{\alpha \alpha'}(t) - A_{\alpha \alpha'}(t)\Vert_{\alpha \alpha'} = 0.
 \end{align}
 Then $(A(t))_{t \geq 0}$ satisfies (A1), $(B(t))_{t \geq 0}$ satisfies (B1) and (B2) with $M(\alpha)$ being the same as in \eqref{LINEAR:16}, 
 and there exists $(V(t,s))_{t \geq s \geq 0}$ satisfying (A2) -- (A4) with a constant $K$ being the same as in \eqref{LINEAR:15}.
 Moreover, for any $n \in \N$, there exist operators $W^{(n)}(t,s)$ and $W(t,s)$ given by Theorem \ref{THEOREM:00}
 with $T(\alpha', \alpha) = \frac{\alpha - \alpha'}{2eKM(\alpha)}$ and,
 for any $\alpha' < \alpha$ and any compact $\Delta_{\alpha' \alpha} \subset \{ (t,s) \ | \ 0 \leq t-s < T(\alpha', \alpha) \}$, it holds that
 \begin{align}\label{STABILITY:EST}
  \lim \limits_{n \to \infty} \sup \limits_{(t,s) \in \Delta_{\alpha' \alpha}} \| W^{(n)}_{\alpha \alpha'}(t,s)k - W_{\alpha \alpha'}(t,s)k\|_{\alpha'} = 0, \ \ k \in \E_{\alpha}.
 \end{align}
\end{Theorem}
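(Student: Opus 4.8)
The plan is to proceed in three steps: first check that the limiting families inherit the structural hypotheses, then construct the limiting evolution system $(V(t,s))_{t\ge s\ge0}$ by a Cauchy argument, and finally pass to the limit term by term in the Ovcyannikov series of Theorem~\ref{THEOREM:00}.

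\emph{Step 1 (structural hypotheses).} Property (A1) for $(A(t))_{t\ge0}$ is immediate: by \eqref{LINEAR:17} each $t\mapsto A_{\alpha\alpha'}(t)$ is a locally uniform (operator-norm) limit of the continuous maps $t\mapsto A^{(n)}_{\alpha\alpha'}(t)$, hence continuous. For $(B(t))_{t\ge0}$, property (B2) with the same $M$ follows by letting $n\to\infty$ in \eqref{LINEAR:16} and using \eqref{LINEAR:20}; property (B1) follows because, for fixed $k\in\E_\alpha$, each $t\mapsto B^{(n)}_{\alpha\alpha'}(t)k$ is continuous and converges uniformly on compacts to $t\mapsto B_{\alpha\alpha'}(t)k$ by \eqref{LINEAR:20}.

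\emph{Step 2 (construction of $V$).} Applying Lemma~\ref{LEMMA:STABILITY} to the pairs $(V^{(n)},A^{(n)})$ and $(V^{(m)},A^{(m)})$ --- whose hypotheses hold by (S1), (S2) --- and bounding the constant there by $K^2$ via \eqref{LINEAR:15}, one gets for $\alpha'<\alpha_0<\alpha_1<\alpha$ and $0\le s\le t\le R$
\[
 \| V^{(n)}_{\alpha\alpha'}(t,s) - V^{(m)}_{\alpha\alpha'}(t,s)\|_{\alpha\alpha'} \le K^2 \int_s^t \| A^{(n)}_{\alpha_1\alpha_0}(r) - A^{(m)}_{\alpha_1\alpha_0}(r)\|_{\alpha_1\alpha_0}\,dr .
\]
Thus \eqref{LINEAR:17} makes $(V^{(n)}_{\alpha\alpha'}(t,s))_n$ uniformly Cauchy in the operator norm on $\{0\le s\le t\le R\}$ for every $R>0$; denote the limit by $V_{\alpha\alpha'}(t,s)$. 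Passing to the limit in the consistency relations \eqref{LINEAR:00}, in $V^{(n)}_{\alpha\alpha'}(t,t)=i_{\alpha\alpha'}$, and in the two integral identities of (A3) --- the latter being legitimate because the integrands converge uniformly on $[s,t]$, using $\|V^{(n)}\|\le K$, $\sup_{n,\,r\in[0,R]}\|A^{(n)}_{\alpha''\alpha'}(r)\|<\infty$ and \eqref{LINEAR:17} --- shows that $(V(t,s))_{t\ge s\ge0}$ satisfies (A2), (A3) and (A4) with the same $K$. Since $(B(t))_{t\ge0}$ satisfies (B1), (B2) (Step 1) and each $(B^{(n)}(t))_{t\ge0}$ satisfies (B1), (B2) by (S3) and \eqref{LINEAR:16}, Theorem~\ref{THEOREM:00} applies to $(V,B)$ and to every $(V^{(n)},B^{(n)})$; as $K$ and $M$ are common, all yield the same $T(\alpha',\alpha)=\frac{\alpha-\alpha'}{2eKM(\alpha)}$.

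\emph{Step 3 (passage to the limit; the main point).} Recall from the proof of Theorem~\ref{THEOREM:00} the iterates $W_0=V$, $W_{m+1}(t,s)=\int_s^t V(t,r)B(r)W_m(r,s)\,dr$ (and likewise $W^{(n)}_m$) together with the bound $\|W^{(n)}_m(t,s)k\|_{\alpha'}\le\|k\|_\alpha\big(\tfrac{t-s}{T(\alpha',\alpha)}\big)^m K$, uniform in $n$, and the same for $W_m$. Given a compact $\Delta_{\alpha'\alpha}\subset\{0\le t-s<T(\alpha',\alpha)\}$, set $\theta:=\sup_{(t,s)\in\Delta_{\alpha'\alpha}}\tfrac{t-s}{T(\alpha',\alpha)}<1$ and $R:=\sup_{(t,s)\in\Delta_{\alpha'\alpha}}t<\infty$; then $\sum_{m\ge N}\|W^{(n)}_m(t,s)k\|_{\alpha'}$ and $\sum_{m\ge N}\|W_m(t,s)k\|_{\alpha'}$ are each $\le\|k\|_\alpha K\theta^N/(1-\theta)$ uniformly in $n$ and $(t,s)\in\Delta_{\alpha'\alpha}$. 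Hence it suffices to show, for each fixed $m$, each scale pair, and each $R>0$, that $W^{(n)}_m(t,s)k\to W_m(t,s)k$ uniformly on $\{0\le s\le t\le R\}$, which I would prove by induction on $m$: the case $m=0$ is Step 2, and for the inductive step one writes $(W^{(n)}_{m+1}(t,s))_{\alpha\alpha'}k=\int_s^t (V^{(n)}(t,r))_{\gamma\alpha'}(B^{(n)}(r))_{\beta\gamma}(W^{(n)}_m(r,s))_{\alpha\beta}k\,dr$ with fixed intermediate $\alpha'<\gamma<\beta<\alpha$, where the three factors converge --- uniformly in the relevant variables --- by Step 2, by \eqref{LINEAR:20}, and by the induction hypothesis at the pair $(\beta,\alpha)$, while all three are uniformly bounded (by $K$, by $M(\beta)/(\beta-\gamma)$, and by $\|k\|_\alpha K(R/T(\beta,\alpha))^m$); so the integrand converges uniformly on $[s,t]$, and integrating over the bounded interval gives the claim. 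Combining the finite-sum convergence with the uniform tail estimate yields \eqref{STABILITY:EST}. The one delicate point is ensuring that every estimate in this step is uniform in $n$ simultaneously with being uniform in $(t,s)$ over the chosen compact set; but this is exactly what the $n$-independent constants $K$, $M$ and the Ovcyannikov bound of Theorem~\ref{THEOREM:00} deliver, so no idea beyond Lemma~\ref{LEMMA:STABILITY} and that bound is needed.
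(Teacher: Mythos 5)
Your proposal is correct and follows essentially the same route as the paper: deduce (A1), (B1), (B2) for the limits, use Lemma \ref{LEMMA:STABILITY} to show $(V^{(n)}_{\alpha\alpha'}(t,s))_n$ is uniformly Cauchy and that the limit satisfies (A2)--(A4), then exploit the $n$-uniform Ovcyannikov bound $\|W^{(n)}_j(t,s)\|_{\alpha\alpha'}\le(\tfrac{t-s}{T(\alpha',\alpha)})^jK$ to reduce \eqref{STABILITY:EST} to termwise convergence of the series, proved by induction on $j$. You merely spell out the induction step and the verification of (A2)--(A4) in more detail than the paper does.
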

\begin{proof}
 From \eqref{LINEAR:17} it follows that $t \longmapsto A_{\alpha \alpha'}(t)$ is continuous in the operator norm and hence satisfies (A1).
 Analogously, one shows that \eqref{LINEAR:20} implies (B1) and (B2).
 Applying Lemma \ref{LEMMA:STABILITY} to $V^{(n)}(t,s)$ and $V^{(m)}(t,s)$ yields,
 for all $t \geq s \geq 0$ and $\alpha ' < \alpha_0 < \alpha_1 < \alpha$, 
 \[
  \| V^{(n)}_{\alpha \alpha'}(t,s) - \widetilde{V}^{(m)}_{\alpha \alpha'}(t,s) \|_{\alpha \alpha'} 
  \leq C(\alpha_0, \alpha_1, \alpha' , \alpha)K^2 \int \limits_{s}^{t} \| A^{(n)}_{\alpha_1 \alpha_0}(r) - A^{(m)}_{\alpha_1\alpha_0}(r) \|_{\alpha_1 \alpha_0}dr.
 \]
 Hence $(V_{\alpha \alpha'}^{(n)}(t,s))_{n \in \N}$ is a Cauchy sequence in $L(\E_{\alpha}, \E_{\alpha'})$.
 Denote by $V_{\alpha \alpha'}(t,s)$ its limit. Then
 \begin{align}\label{LINEAR:19}
  \lim \limits_{n \to \infty}\sup \limits_{(t,s) \in \Delta_{\alpha' \alpha}} \Vert V^{(n)}_{\alpha \alpha'}(t,s) - V_{\alpha \alpha'}(t,s) \Vert_{\alpha\alpha'} = 0.
 \end{align}
 It is not difficult to show that $(V(t,s))_{t \geq s \geq 0}$ satisfies the properties (A2) -- (A4).
 Hence we may apply Theorem \ref{THEOREM:00} and obtain the existence of $W^{(n)}(t,s), W(t,s)$ given by 
 \begin{align}\label{SERIES}
  W(t,s) = \sum \limits_{j=0}^{\infty}W_j(t,s), \qquad W^{(n)}(t,s) = \sum \limits_{j=0}^{\infty} W_j^{(n)}(t,s),
 \end{align}
 where $W_j(t,s)$ and $W_j^{(n)}(t,s)$ are defined in the proof of Theorem \ref{THEOREM:00}.
 Arguing as in the proof of Theorem \ref{THEOREM:00},
 we obtain from \eqref{LINEAR:15} and \eqref{LINEAR:16} the inequality
 \[
  \| W_j(t,s) \|_{\alpha \alpha'}, \ \Vert W_j^{(n)}(t,s) \Vert_{\alpha \alpha'} \leq \left( \frac{t-s}{T(\alpha', \alpha)}\right)^j K, \ \ n \geq 1, \ \ j \geq 0.
 \]
 This implies that the series in \eqref{SERIES} converges in the strong operator topology on $L(\E_{\alpha}, \E_{\alpha'})$, $\alpha' < \alpha$,
 uniformly with respect to $(t,s) \in \Delta_{\alpha' \alpha}$ and $n \in \N$.
 Thus it suffices to show that, for all $j \geq 0$ and $\alpha' < \alpha$,
 \[
  \lim \limits_{n \to \infty} \sup \limits_{(t,s) \in \Delta_{\alpha' \alpha}} \| W_j^{(n)}(t,s)k - W_j(t,s)k \|_{\alpha'} = 0, \ \ k \in \E_{\alpha}.
 \]
 For $j = 0$ this follows from \eqref{LINEAR:19}. For $j \geq 1$ we proceed by induction using the definition of $W_j^n(t,s), W_j(t,s)$ and \eqref{LINEAR:20}.
\end{proof}
\begin{Remark}
 If, in addition to the conditions of Theorem \ref{LINEARTH:05}, the mapping
 \[
  [0, \infty) \ni t \longmapsto B_{\alpha \alpha'}^{(n)}(t)\in L(\E_{\alpha}, \E_{\alpha'})
 \]
 is continuous in the operator norm for all $\alpha' < \alpha$. Then \eqref{SERIES} converges in the operator norm and hence 
 \eqref{STABILITY:EST} also holds in the operator norm.
 This includes e.g. the time-homogeneous case.
\end{Remark}

\subsection{Construction of backward evolution system}
In this section we give an analogous construction for the corresponding backward evolution system.
Namely, we consider a family of operators $(V(s,t))_{t \geq s \geq 0}$ with the property (A2)$^*$ and we assume that
\begin{enumerate}
 \item[(A4)$^*$]  There exists a constant $K \geq 1$ such that for all $\alpha' < \alpha$
 \begin{align*}
  \Vert V_{\alpha \alpha'}(s,t)\Vert_{\alpha \alpha'} \leq K, \ \ 0 \leq s \leq t.
 \end{align*}
\end{enumerate}
The next statement is proved analogously to Theorem \ref{THEOREM:00}.
\begin{Theorem}\label{LINEARTH:03}
 Suppose that $(V(s,t))_{t \geq s \geq 0}$ satisfies (A2)$^*$, (A4)$^*$ and $(B(t))_{t \geq 0}$ satisfies (B1), (B2).
 Define $T(\alpha', \alpha) := \frac{\alpha - \alpha'}{2Ke M(\alpha)}$, $\alpha' < \alpha$.
 Then there exists a family of operators 
 \[
  \{ W_{\alpha \alpha'}(s,t) \in L(\E_{\alpha}, \E_{\alpha'}) \ | \ \alpha' < \alpha, \ \ 0 \leq t-s < T(\alpha', \alpha) \}
 \]
 with the properties
 \begin{enumerate}
  \item[(a)] For any $\alpha' < \alpha$ and $0 \leq t-s < T(\alpha', \alpha)$ we have $W_{\alpha\alpha'}(s,t) \in L(\E_{\alpha}, \E_{\alpha'})$,
  $(s,t) \longmapsto W_{\alpha \alpha'}(s,t)$ is strongly continuous on $L(\E_{\alpha}, \E_{\alpha'})$ with $W_{\alpha \alpha'}(t,t) = i_{\alpha \alpha'}$, and
  \[
   \Vert W_{\alpha \alpha'}(s,t) \Vert_{\alpha \alpha'} \leq  \frac{T(\alpha', \alpha)}{T(\alpha',\alpha) - (t-s)}K.
  \]
  \item[(b)] For all $\alpha' < \alpha'' < \alpha$ with $0 \leq t - s < \min\{ T(\alpha', \alpha), T(\alpha'', \alpha)\}$ we have
  \[
   W_{\alpha \alpha'}(s,t) = i_{\alpha'' \alpha'}W_{\alpha \alpha''}(s,t),
  \]
  and for  all $\alpha' < \alpha'' < \alpha$ with $0 \leq t - s < \min\{ T(\alpha', \alpha), T(\alpha', \alpha'')\}$ we have
  \[
   W_{\alpha \alpha'}(s,t) = W_{\alpha'' \alpha'}(s,t)i_{\alpha \alpha''}.
  \]
  \item[(c)] For all $\alpha' < \alpha'' < \alpha$ with $0 \leq t - s < \min\{ T(\alpha', \alpha), T(\alpha', \alpha'')\}$ we have
  \begin{align*}
   W_{\alpha \alpha'}(s,t)k = V_{\alpha \alpha'}(s,t)k + \int \limits_{s}^{t}W_{\alpha'' \alpha'}(s,r)(B(r)V(r,t))_{\alpha \alpha''}k dr, \ \ \forall k \in \E_{\alpha}.
  \end{align*}
   Moreover $W(s,t)$ is unique with such property.
  \item[(d)] For all $\alpha' < \alpha'' < \alpha$ with $0 \leq t - s < \min\{ T(\alpha', \alpha), T(\alpha'', \alpha)\}$ we have
  \begin{align*}
   W_{\alpha \alpha'}(s,t)k = V_{\alpha \alpha'}(s,t)k + \int \limits_{s}^{t}(V(s,r)B(r))_{\alpha'' \alpha'}W_{\alpha \alpha''}(r,t)k dr, \ \ \forall k \in \E_{\alpha}.
  \end{align*}
   Moreover $W(s,t)$ is also unique with such property.
 \end{enumerate}
\end{Theorem}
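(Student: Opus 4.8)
The plan is to transcribe the proof of Theorem \ref{THEOREM:00}, reversing the direction of time so that the birth operator $B(r)$ is composed on the appropriate side of the propagators. Fix the radius $T(\alpha',\alpha) = \frac{\alpha-\alpha'}{2eKM(\alpha)}$ and define a sequence $(W_n(s,t))_{0 \le s \le t} \subset L(\E)$ by $W_0(s,t) := V(s,t)$ and
\[
 W_{n+1}(s,t) := \int_s^t W_n(s,r)\,B(r)\,V(r,t)\, dr ,
\]
where for $\alpha' < \alpha$ the integral is taken in the strong topology of $L(\E_\alpha,\E_{\alpha'})$ and the triple compositions are understood via \eqref{LINEAR:01} (splitting $[\alpha',\alpha]$ into three sub-intervals). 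Using (A2)$^*$ and (B1) one checks inductively that $r \longmapsto W_n(s,r)B(r)V(r,t)k$ is continuous, so that each $W_n(s,t)k$ is continuous in $(s,t)$ and the integrals are well defined.

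First I would prove, for all $\alpha' < \alpha$, $n \ge 0$ and $k \in \E_\alpha$, the estimate
\[
 \| W_n(s,t)k \|_{\alpha'} \le \| k \|_{\alpha} \left( \frac{t-s}{T(\alpha',\alpha)} \right)^{n} K ,
\]
exactly as in Theorem \ref{THEOREM:00}. For $n=0$ this is (A4)$^*$. For $n \ge 1$ I would write $W_n(s,t)k$ as an $n$-fold iterated integral over the simplex $s \le t_n \le \dots \le t_1 \le t$ of $V(s,t_1)B(t_1)\cdots B(t_n)V(t_n,t)k$, insert the intermediate levels $\alpha_j := \alpha' + j\tfrac{\alpha-\alpha'}{2n}$ ($j=0,\dots,2n$), bound each of the $n$ factors $\|B(t_j)\|$ by $\tfrac{M(\alpha)}{\alpha-\alpha'}\cdot 2n$ using \eqref{LINEAR:33} and the monotonicity of $M$, bound each $V$-factor by $K$ via (A4)$^*$, and bound the simplex volume by $(t-s)^n/n!$; the elementary inequality $\tfrac1{n!} \le (e/n)^n$ then gives the claimed bound with $T(\alpha',\alpha) = \tfrac{\alpha-\alpha'}{2eKM(\alpha)}$. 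Summing the geometric series yields locally uniform convergence of $W(s,t)k := \sum_{n\ge 0} W_n(s,t)k$ in $\E_{\alpha'}$ on $\{0 \le t-s < T(\alpha',\alpha)\}$, hence continuity of $(s,t) \mapsto W_{\alpha\alpha'}(s,t)k$, the bound $\|W_{\alpha\alpha'}(s,t)\|_{\alpha\alpha'} \le \tfrac{T(\alpha',\alpha)}{T(\alpha',\alpha)-(t-s)}K$, and $W_{\alpha\alpha'}(t,t) = i_{\alpha\alpha'}$; this is part (a). Part (b) follows term by term from the identities $(W_n(s,t))_{\alpha''\alpha'}i_{\alpha\alpha''} = (W_n(s,t))_{\alpha\alpha'} = i_{\alpha''\alpha'}(W_n(s,t))_{\alpha\alpha''}$, which $W_n$ inherits from $V(s,t), B(t) \in L(\E)$.

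Next I would pass to the limit in the recursion to obtain the integral equation (c). Given $\alpha' < \alpha$ with $t-s < T(\alpha',\alpha)$, continuity of $T(\cdot,\alpha)$ lets us pick $\alpha'' \in (\alpha',\alpha)$ with $t-s < \min\{T(\alpha',\alpha),T(\alpha',\alpha'')\}$; the estimate above shows $\sum_{n\ge 0}(W_n(s,r))_{\alpha''\alpha'}(B(r)V(r,t))_{\alpha\alpha''}k$ converges locally uniformly in $r\in[s,t]$, and since $W_{\alpha''\alpha'}(s,r)$ is strongly continuous and $(B(r)V(r,t))_{\alpha\alpha''}\in L(\E_\alpha,\E_{\alpha''})$ acts continuously in $r$, we may interchange summation and integration:
\begin{align*}
 W_{\alpha\alpha'}(s,t)k &= V_{\alpha\alpha'}(s,t)k + \sum_{n\ge 1}\int_s^t (W_{n-1}(s,r))_{\alpha''\alpha'}(B(r)V(r,t))_{\alpha\alpha''}k\, dr \\
 &= V_{\alpha\alpha'}(s,t)k + \int_s^t W_{\alpha''\alpha'}(s,r)(B(r)V(r,t))_{\alpha\alpha''}k\, dr .
\end{align*}
For part (d) I would note, just as in Theorem \ref{THEOREM:00}, that the same sequence also satisfies $W_{n+1}(s,t) = \int_s^t V(s,r)B(r)W_n(r,t)\, dr$ (an induction using (A2)$^*$ and associativity of the composition \eqref{LINEAR:01}), and then repeat the limiting argument with $V$ and $W$ interchanged. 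Uniqueness in (c) and (d) follows the template of Theorem \ref{THEOREM:00}.(c): subtracting the integral equation for a second family $\widetilde W$ with properties (a), (b), iterating the resulting identity $n$ times produces a kernel $\widetilde Q_n = V(s,t_1)B(t_1)\cdots V(t_{n-1},t_n)B(t_n)$ (resp.\ its reversal) bounded in $L(\E_{\alpha''},\E_{\alpha'})$ by $K^n M(\alpha'')^n(2n)^n/(\alpha''-\alpha')^n$ via the subdivision $\alpha_j = \alpha' + j\tfrac{\alpha''-\alpha'}{2n}$; combined with the uniform bound on $\sup_r\|(W-\widetilde W)(s,r)\|_{\alpha\alpha''}$ from (a) and the simplex factor $(t-s)^n/n!$ this gives $\|W-\widetilde W\| = O\!\big((C(t-s))^n/n!\big)\to 0$.

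I do not expect a conceptual obstacle here: every step is the mirror image of Theorem \ref{THEOREM:00}, and the only place that needs care is the index bookkeeping — arranging the chains $\alpha'=\alpha_0<\alpha_1<\dots<\alpha_{2n}=\alpha$ (and, in the uniqueness step, the analogous chain on $[\alpha',\alpha'']$) so that each use of \eqref{LINEAR:33} costs exactly a factor $(\alpha_{j+1}-\alpha_j)^{-1}=2n/(\alpha-\alpha')$ while $M$ is evaluated only at points $\le\alpha$, and then verifying that the $1/n!$ gained from the time-simplex precisely cancels the $n^n$ loss so that the series has radius of convergence $T(\alpha',\alpha)$. All the asymmetry between the forward and backward theorems is absorbed into whether $B(r)$ sits to the left or to the right of the propagators, which affects none of these estimates.
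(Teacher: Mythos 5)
Your proposal is correct and follows essentially the same route as the paper: the paper's proof of Theorem \ref{LINEARTH:03} likewise defines $W_0(s,t)=V(s,t)$, $W_{n+1}(s,t)=\int_s^t W_n(s,r)B(r)V(r,t)\,dr$, establishes the bound $\|W_n(s,t)k\|_{\alpha'}\le \|k\|_\alpha\big(\tfrac{t-s}{T(\alpha',\alpha)}\big)^nK$ by the same subdivision argument, and obtains (a)--(d) exactly as you describe, including the alternative recursion $W_{n+1}(s,t)=\int_s^t V(s,r)B(r)W_n(r,t)\,dr$ for part (d). The index bookkeeping and the placement of $B(r)$ relative to the propagators are handled correctly in your version.
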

\begin{proof}
 Since the proof is very similar to Theorem \ref{THEOREM:00}, we only sketch the main differences.
 Define a sequence of operators $(W_n(s,t))_{0 \leq s \leq t} \subset L(\E)$ by $W_0(s,t) = V(s,t)$ and 
 \begin{align*}
  W_{n+1}(s,t) := \int \limits_{s}^{t}W_n(s,r)B(r)V(r,t) dr,
 \end{align*}
 where the integrals are, for all $\alpha' < \alpha$, defined in the strong topology on $L(\E_{\alpha}, \E_{\alpha'})$.
 As before we can show that, for any $\alpha' < \alpha$, $n \geq 0$ and $k \in \E_{\alpha}$, the function $W_n(s,t)k \in \E_{\alpha'}$ 
 is continuous in $(s,t)$ and satisfies
 \[
  \Vert W_n(s,t)k\Vert_{\alpha'} \leq \Vert k \Vert_{\alpha} \left(\frac{t-s}{T(\alpha',\alpha)}\right)^nK.
 \]
 From this we readily deduce assertion (a). Assertion (b) is again a direct consequence of 
 \[
   (W_n(s,t))_{\alpha' \alpha} = i_{\alpha'' \alpha'}(W_n(s,t))_{\alpha \alpha''} = (W_n(s,t))_{\alpha'' \alpha'}i_{\alpha \alpha''}.
 \]
 Assertion (c) can be shown in the same way as Theorem \ref{THEOREM:00}.(c).
 For the last property (d), observe that the sequence $(W_n(s,t))_{n \in \N}$ also satisfies the relation
 \[
  W_{n+1}(s,t) = \int \limits_{s}^{t}V(s,r)B(r)W_{n}(r,t) dr
 \]
 from which we may deduce (d).
\end{proof}
Below we relate this evolution family $W(s,t)$ with a backward
Cauchy problem.
\begin{Definition}
 Fix $\alpha_* < \alpha' < \alpha$, $k \in \E_{\alpha}$ and $t \geq 0$. A solution on $[t-T,t]$, $T \in (0,t]$, to the backward Cauchy problem
 \begin{align}\label{EQ:05}
  \frac{d}{ds}i_{\alpha' \alpha''}v(s) = - (A_{\alpha' \alpha''}(s) + B_{\alpha' \alpha''}(s))v(s), \ \ v(t) = k, \ \ s \in [t-T,t]
 \end{align}
 is, by definition, a function $v \in C([0,t]; \E_{\alpha'})$, such that $i_{\alpha' \alpha''}u \in C^1([t-T,t]; \E_{\alpha''})$ and 
\eqref{EQ:05} holds for all $\alpha'' \in (\alpha_*, \alpha')$.
\end{Definition}
\begin{Corollary}
 Let $(A(t))_{t \geq 0}$ be given as in (A1) and let $(V(s,t))_{t \geq s \geq 0}$ be such that (A2)$^*$ -- (A4)$^*$ holds.
 Suppose that $(B(t))_{t \geq 0}$ satisfies (B1), (B2). Let $W(s,t)$ be the evolution system given by Theorem \ref{LINEARTH:03}.
 Then the following assertions hold
 \begin{enumerate}
  \item[(a)] For all $\alpha' < \alpha'' < \alpha$ and $k \in \E_{\alpha}$,
  \[
   [s, s +  \min\{ T(\alpha', \alpha), T(\alpha', \alpha'')\} ) \ni t \longmapsto W_{\alpha \alpha'}(s,t)k \in \E_{\alpha'}
  \]
  is continuously differentiable with
  \begin{align*}
   \frac{\partial}{\partial t}W_{\alpha \alpha'}(s,t)k = W_{\alpha''\alpha'}(s,t)(A_{\alpha \alpha''}(t) + B_{\alpha \alpha''}(t))k.
  \end{align*}
  \item[(b)] For all $\alpha' < \alpha'' < \alpha$ and $k \in \E_{\alpha}$,
  \[
   (t -  \min\{ T(\alpha', \alpha), T(\alpha'', \alpha)\}, t] \ni s \longmapsto W_{\alpha \alpha'}(s,t)k \in \E_{\alpha'}
  \]
  is continuously differentiable with
  \begin{align*}
 \frac{\partial}{\partial s}W_{\alpha \alpha'}(s,t)k = - (A_{\alpha'' \alpha'}(s) + B_{\alpha'' \alpha'}(s))W_{\alpha \alpha''}(s,t)k.
  \end{align*}
  \item[(c)] Fix $t > 0$, $\alpha' < \alpha$ and $k \in \E_{\alpha}$.
  Let $u$ be a solution to \eqref{EQ:05} on $[T-t,t]$, where $T \in (0,t]$.
  Then $u(s) = W_{\alpha \alpha'}(s,t)k$ holds for any $s$ with $t - \min\{T, T(\alpha', \alpha)\} < s \leq t$.
\end{enumerate}
\end{Corollary}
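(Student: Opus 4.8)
The plan is to follow the proof of Corollary~\ref{CORR:00} almost verbatim, replacing each forward object by its backward counterpart: Theorem~\ref{THEOREM:00} by Theorem~\ref{LINEARTH:03}, Lemma~\ref{LINEARLEMMA:00} by Lemma~\ref{LINEARLEMMA:10}, and the properties (A2)--(A4) by (A2)$^*$--(A4)$^*$.

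First I would prove (a) and (b). For (a) I differentiate the integral equation of Theorem~\ref{LINEARTH:03}.(c) in $t$, and for (b) I differentiate that of Theorem~\ref{LINEARTH:03}.(d) in $s$; in both cases the differentiated variable occurs only in the leading term $V_{\alpha\alpha'}(s,t)k$, in a single $V$-factor of the integrand, and as an endpoint of the integral, while the $W$-factor of the integrand does not depend on it. The integrands are continuous in the integration variable by (A1), (B1) and the strong continuity of $W$ from Theorem~\ref{LINEARTH:03}.(a), so the fundamental theorem of calculus applies; differentiating the $V$-terms with Lemma~\ref{LINEARLEMMA:10}.(a),(b) and re-inserting the integral equation for the resulting bulk integral, the boundary term contributes the $B$-part and the bulk term the $A$-part, exactly as in Corollary~\ref{CORR:00}.(a),(b). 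By \eqref{LINEAR:00} the right-hand sides are independent of the auxiliary index $\alpha'' \in (\alpha_*,\alpha')$.

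For the uniqueness assertion (c) I would fix $t>0$, $\alpha'<\alpha$, $k\in\E_\alpha$ and a solution $u$ of \eqref{EQ:05} on $[t-T,t]$, and set $I := (t-\min\{T,T(\alpha',\alpha)\},t\,]$ and $w(s) := W_{\alpha\alpha'}(s,t)k-u(s)$ for $s \in I$. Then $w$ is continuous on $I$ by Theorem~\ref{LINEARTH:03}.(a), and by assertion (b) together with the definition of a solution, $w(t)=0$ and
\[
 \frac{d}{ds}\,i_{\alpha'\alpha''}w(s) = -\big(A_{\alpha'\alpha''}(s)+B_{\alpha'\alpha''}(s)\big)w(s), \qquad \alpha'' \in (\alpha_*,\alpha'),\ \ s\in I .
\]
Next I would establish the backward Duhamel identity $i_{\alpha'\alpha''}w(s) = \int_s^t (V(s,r)B(r))_{\alpha'\alpha''}w(r)\,dr$ for $s\in I$, by differentiating $r\mapsto V(s,r)w(r)$ (with enough intermediate scales below $\alpha'$ for the product rule to make sense in the scale), using Lemma~\ref{LINEARLEMMA:10} and the equation for $w$ so that the two $A$-terms cancel, and then integrating over $r\in[s,t]$ with $w(t)=0$ and (A2)$^*$. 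Iterating this identity $n$ times yields
\[
 i_{\alpha'\alpha''}w(s) = \int_s^t\!\int_{t_1}^t\!\cdots\!\int_{t_{n-1}}^t \widetilde{Q}_n(s,t_1,\dots,t_n)_{\alpha'\alpha''}\,w(t_n)\,dt_n\cdots dt_1 ,
\]
where $\widetilde{Q}_n(s,t_1,\dots,t_n) := V(s,t_1)B(t_1)V(t_1,t_2)B(t_2)\cdots V(t_{n-1},t_n)B(t_n) \in L(\E_{\alpha'},\E_{\alpha''})$.

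Finally I would run the Ovcyannikov estimate and a bootstrap, just as in Corollary~\ref{CORR:00}. Choosing the intermediate scales $\alpha_j := \alpha''+j\frac{\alpha'-\alpha''}{2n}$, $j=0,\dots,2n$, and using (A4)$^*$ and \eqref{LINEAR:33} gives $\Vert\widetilde{Q}_n(s,t_1,\dots,t_n)\Vert_{\alpha'\alpha''}\le K^n\frac{M(\alpha')^n(2n)^n}{(\alpha'-\alpha'')^n}$, hence with $C_{\alpha'} := \sup_{r\in[s,t]}\Vert w(r)\Vert_{\alpha'}<\infty$ and $\frac{1}{n!}\le(\tfrac{e}{n})^n$ one obtains
\[
 \Vert w(s)\Vert_{\alpha''} \le C_{\alpha'}\left(\frac{2eKM(\alpha')(t-s)}{\alpha'-\alpha''}\right)^n, \qquad n\ge1 .
\]
If $0\le t-s<T(\alpha'',\alpha')$ the right-hand side tends to $0$, so $w(s)=0$ in $\E_{\alpha''}$ and hence in $\E_\alpha$ because the embeddings are injective; taking $\alpha'':=\tfrac{\alpha'+\alpha_*}{2}$ and $T_0(\alpha',\alpha):=\min\{T(\alpha',\alpha),\tfrac{\alpha'-\alpha_*}{4eKM(\alpha')}\}$ this shows $u(s)=W_{\alpha\alpha'}(s,t)k$ on $[t-\tfrac12 T_0(\alpha',\alpha),t]$, and since $w$ vanishes in $\E_\alpha$ at $s=t-\tfrac12 T_0(\alpha',\alpha)$ the argument restarts with $t$ replaced by that point, a finite number of such steps covering all of $I$. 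I expect the one genuinely delicate point to be the rigorous derivation of the backward Duhamel identity — differentiating $r\mapsto V(s,r)w(r)$ in the strong (or, if $B$ is operator-norm continuous, the operator-norm) topology, which needs Lemma~\ref{LINEARLEMMA:10}.(d) and a careful choice of auxiliary indices so that every composition lands in the correct space in the scale; once it is in place, the remainder is exactly the iteration-and-estimate scheme of Corollary~\ref{CORR:00}.
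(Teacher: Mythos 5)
Your proposal is correct and matches the paper's intent exactly: the paper's own proof of this corollary consists of the single line ``follows by similar arguments to Corollary \ref{CORR:00}'', and what you have written is precisely that argument carried out — differentiating the integral equations of Theorem \ref{LINEARTH:03}.(c),(d) via Lemma \ref{LINEARLEMMA:10} for (a) and (b), and for (c) the backward Duhamel identity, the Ovcyannikov iteration with intermediate scales $\alpha_j$, and the bootstrap in steps of $\tfrac12 T_0(\alpha',\alpha)$. (The only cosmetic slip is ``$w(s)=0$ in $\E_{\alpha}$'' where you mean $\E_{\alpha'}$, an imprecision the paper itself commits in the forward case.)
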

\begin{proof}
 Follows by similar arguments to Corollary \ref{CORR:00}.
\end{proof}
\begin{Remark}
 The proofs show that, if $t \longmapsto B_{\alpha \alpha'}(t) \in L(\E_{\alpha}, \E_{\alpha'})$ is continuous in the operator norm for any $\alpha' < \alpha$, 
 then $W_{\alpha \alpha'}(s,t)$ is continuously differentiable in the operator norm on $L(\E_{\alpha}, \E_{\alpha'})$.
\end{Remark}
As a consequence we can deduce the backward evolution property for $W(s,t)$.
\begin{Corollary}
 Let $(A(t))_{t \geq 0}$ be given as in (A1) and let $(V(s,t))_{t \geq s \geq 0}$ be such that (A2)$^*$ -- (A4)$^*$ holds.
 Suppose that $(B(t))_{t \geq 0}$ satisfies (B1), (B2). Let $W(s,t)$ be the evolution system given by Theorem \ref{LINEARTH:03}.
 Then, for all $\alpha' < \alpha'' < \alpha$ and $0 \leq s \leq r \leq t$ satisfying the relations 
 \[
  t - s < T(\alpha', \alpha), \qquad r-s < T(\alpha'', \alpha), \qquad t-r < T(\alpha', \alpha''),
 \]
 it holds that $W_{\alpha \alpha'}(s,t) = W_{\alpha'' \alpha'}(s,r) W_{\alpha \alpha''}(r,t)$.
\end{Corollary}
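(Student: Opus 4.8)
The plan is to imitate the uniqueness arguments of Lemma~\ref{LINEARLEMMA:00}.(c) and Corollary~\ref{CORR:00}: fix $\alpha' < \alpha'' < \alpha$, the outer times $0 \le s \le t$ and $k \in \E_\alpha$ subject to the stated relations, and regard the right-hand side as a function of the intermediate time $r$,
\[
 F(r) := W_{\alpha'' \alpha'}(s,r)\, W_{\alpha \alpha''}(r,t)k \in \E_{\alpha'}.
\]
The goal is to show $F$ is constant with value $W_{\alpha \alpha'}(s,t)k$; since $k$ is arbitrary, this gives the claimed operator identity. The value at an endpoint is read off from Theorem~\ref{LINEARTH:03}: at $r=s$ one has $F(s) = W_{\alpha'' \alpha'}(s,s)W_{\alpha\alpha''}(s,t)k = i_{\alpha''\alpha'}W_{\alpha\alpha''}(s,t)k = W_{\alpha\alpha'}(s,t)k$ by (a) and the first relation of (b), and similarly at $r=t$ using $W_{\alpha\alpha''}(t,t)=i_{\alpha\alpha''}$ and the second relation of (b); so it suffices to prove $F'(r)\equiv 0$.

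Differentiability of $F$ follows from the preceding Corollary (differentiability of $W(s,t)$ and uniqueness for \eqref{EQ:05}): for fixed $v\in\E_{\alpha''}$ the map $r\mapsto W_{\alpha''\alpha'}(s,r)v$ is $C^1$ in $\E_{\alpha'}$ with $\partial_r W_{\alpha''\alpha'}(s,r)v = W_{\beta\alpha'}(s,r)(A_{\alpha''\beta}(r)+B_{\alpha''\beta}(r))v$ for any $\beta\in(\alpha',\alpha'')$, while $r\mapsto W_{\alpha\alpha''}(r,t)k$ is $C^1$ in $\E_{\alpha''}$ with $\partial_r W_{\alpha\alpha''}(r,t)k = -(A_{\gamma\alpha''}(r)+B_{\gamma\alpha''}(r))W_{\alpha\gamma}(r,t)k$ for any $\gamma\in(\alpha'',\alpha)$. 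Since the operators $W_{\beta\alpha'}(s,r)$ are uniformly bounded and strongly continuous (Theorem~\ref{LINEARTH:03}.(a)), $A$ is norm-continuous by (A1) and $B$ strongly continuous by (B1), the product rule applies in exactly the form used in the proof of Lemma~\ref{LINEARLEMMA:00}.(c) — here no embedding need be inserted, since the intermediate space $\E_{\alpha''}$ already matches — and yields
\begin{align*}
 F'(r) &= W_{\beta\alpha'}(s,r)(A_{\alpha''\beta}(r)+B_{\alpha''\beta}(r))W_{\alpha\alpha''}(r,t)k \\
 &\quad - W_{\alpha''\alpha'}(s,r)(A_{\gamma\alpha''}(r)+B_{\gamma\alpha''}(r))W_{\alpha\gamma}(r,t)k .
\end{align*}

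The heart of the matter is the purely algebraic observation that the two terms cancel, which is where the consistency condition \eqref{LINEAR:01} (equivalently \eqref{LINEAR:00}) enters. Choosing $\alpha' < \beta < \alpha'' < \gamma < \alpha$, the composition $(A(r)+B(r))W(r,t)$ may be routed through $\E_{\alpha''}$ or through $\E_\gamma$, so $(A_{\alpha''\beta}(r)+B_{\alpha''\beta}(r))W_{\alpha\alpha''}(r,t) = (A_{\gamma\beta}(r)+B_{\gamma\beta}(r))W_{\alpha\gamma}(r,t)$; likewise $W(s,r)(A(r)+B(r))$ may be routed through $\E_\beta$ or through $\E_{\alpha''}$, so $W_{\beta\alpha'}(s,r)(A_{\gamma\beta}(r)+B_{\gamma\beta}(r)) = W_{\alpha''\alpha'}(s,r)(A_{\gamma\alpha''}(r)+B_{\gamma\alpha''}(r))$. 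Substituting, the first term of $F'(r)$ becomes the second, whence $F'(r)=0$ on the interval of definition. Combined with the endpoint evaluation this proves the identity first for $|t-s|$ so small that $F$ is defined on all of $[s,t]$, and then for the full range permitted by the hypotheses by splitting $[s,t]$ into short subintervals and composing, exactly as in the iteration closing the proof of Corollary~\ref{CORR:00}.

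I expect the only real work to lie in this bookkeeping — choosing the auxiliary indices $\beta,\gamma$, and in the localization step the pieces of $[s,t]$, so that all smallness conditions of Theorem~\ref{LINEARTH:03} hold simultaneously — together with the routine verification that the product rule is legitimate when $B$ is merely strongly continuous, handled as in Lemma~\ref{LINEARLEMMA:00}.(c) (inserting an embedding $i_{\alpha_1\alpha''}$ with $\alpha_1\in(\alpha'',\alpha)$ if extra room in the scale is convenient). Equivalently, one can bypass the differentiation: fixing $r$ as a pivot, both $\sigma\mapsto W_{\alpha\alpha'}(\sigma,t)k$ and $\sigma\mapsto W_{\alpha''\alpha'}(\sigma,r)W_{\alpha\alpha''}(r,t)k$ solve the backward Cauchy problem \eqref{EQ:05} on a left neighbourhood of $r$ with the common terminal value $W_{\alpha\alpha'}(r,t)k = i_{\alpha''\alpha'}W_{\alpha\alpha''}(r,t)k$ at time $r$, so the uniqueness part of the preceding Corollary identifies them, and the identity is again propagated to the whole range by iteration.
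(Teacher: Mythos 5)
The paper states this corollary without proof, presenting it as a direct consequence of the preceding corollary on differentiability and uniqueness for the backward Cauchy problem \eqref{EQ:05}; your argument --- showing that the derivative in the intermediate time $r$ of $W_{\alpha''\alpha'}(s,r)W_{\alpha\alpha''}(r,t)k$ vanishes via the consistency relations \eqref{LINEAR:00} and Theorem \ref{LINEARTH:03}(b), or equivalently invoking uniqueness for \eqref{EQ:05} --- is correct and is exactly the intended derivation, mirroring the proof of Lemma \ref{LINEARLEMMA:00}(c). One small point worth recording: for $W_{\alpha''\alpha'}(s,r)$ and $W_{\alpha\alpha''}(r,t)$ to be defined one needs $r-s<T(\alpha',\alpha'')$ and $t-r<T(\alpha'',\alpha)$, so the two middle constraints in the statement appear to have been carried over unchanged from the forward version and should be interchanged; your localization/iteration step should be run with these corrected constraints.
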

\begin{Remark}
 A similar stability result as for the forward evolution system can be also obtained in this case.
\end{Remark}

\subsection{The dual Cauchy problem}
Set $\B_{\alpha} := \E_{\alpha}^*$ with $\| \cdot \|_{\B_{\alpha}} =: \vertiii{\cdot}_{\alpha}$, for $\alpha > \alpha_*$. 
Then, for any $\alpha' < \alpha$, $\B_{\alpha'} \ni \ell \longmapsto \ell|_{\E_{\alpha}} \in \B_{\alpha}$ defines an embedding with
$\vertiii{\ell|_{\E_{\alpha}}}_{\alpha} \leq \vertiii{\ell}_{\alpha'}$. By abuse of notation, we denote this embedding also 
by $i_{\alpha' \alpha}$. Hence $\B = (\B_{\alpha})_{\alpha > \alpha_*}$ is a scale of Banach spaces satisfying
\begin{align}\label{LINEAR:50}
 \B_{\alpha'} \subset \B_{\alpha}, \qquad \vertiii{\cdot}_{\alpha} \leq \vertiii{\cdot}_{\alpha'}, \qquad \alpha' < \alpha.
\end{align}
Bounded linear operators in the scale $\B$, the space $L(\B)$ and the composition of bounded linear operators on $\B$, 
are defined analogously to the scale $\E$. Denote by $\langle k, \ell \rangle_{\alpha} := \ell(k)$, where $k \in \E_{\alpha}$, 
$\ell \in \B_{\alpha}$ and $\alpha > \alpha_*$, the dual pairing between $\E_{\alpha}$ and $\B_{\alpha}$. Then, for all $\alpha' < \alpha$, 
\begin{align*}
 \langle i_{\alpha \alpha'} k, \ell \rangle_{\alpha'} = \langle k, i_{\alpha' \alpha} \ell \rangle_{\alpha}, \ \ k \in \E_{\alpha}, \ \ \ell \in \B_{\alpha'}.
\end{align*}
We omit the subscript $\alpha$, if no confusion may arise.
For given $Q \in L(\E)$ the adjoint operator $Q^* \in L(\B)$ is defined by
\[
 (Q_{\alpha' \alpha}^*\ell)(k) := \ell(Q_{\alpha \alpha'}k), \ \  k \in \E_{\alpha}, \ \ \ell \in \B_{\alpha'}, \ \ \alpha' < \alpha,
\]
and hence satisfies $\langle Q_{\alpha \alpha'}k, \ell \rangle = \langle k, Q_{\alpha' \alpha}^* \ell \rangle$ 
for $k \in \E_{\alpha}$ and $\ell \in \B_{\alpha'}$.
\begin{Definition}
 Let $(A(t))_{t \geq 0}$ be given as in (A1) and let 
 $(B(t))_{t \geq 0}$ satisfy (B1) and (B2).
 Let $\mathcal{Y} \subset \bigcap_{\beta > \alpha_*}\E_{\beta}$ be such that $\mathcal{Y}$ is dense in $\E_{\beta}$ 
 for each $\beta > \alpha_*$.
 Fix $\alpha' < \alpha$ and $\ell \in \B_{\alpha'}$.
 \begin{enumerate}
  \item[(a)] Fix $t > 0$ and $T \in (0,t]$. A solution to 
  \begin{align}\label{EQ:12}
   \langle k, \ell(s) \rangle = \langle k, \ell \rangle + \int \limits_{s}^{t} \langle ( A(r) + B(r))_{\alpha'' \alpha}k, \ell(r) \rangle dr,
  \ \ s \in [T-t,t]
  \end{align}
  is, by definition, a family $(\ell(s))_{s \in [T-t,t]} \subset \B_{\alpha}$ such that 
 \begin{enumerate}
  \item[(i)] $[T-t,t] \ni s \longmapsto \langle k, \ell(s) \rangle$ is continuous for any $k \in \E_{\alpha}$.
  \item[(ii)] \eqref{EQ:12} holds for any $k \in \mathcal{Y}$ and all $\alpha'' > \alpha$.
 \end{enumerate}
  \item[(b)] Fix $s \geq 0$ and $T > 0$. A solution to
  \begin{align}\label{EQ:13}
   \langle k, \ell(t) \rangle = \langle k, \ell \rangle + \int \limits_{s}^{t} \langle  ( A(r) + B(r))_{\alpha'' \alpha}k, \ell(r) \rangle dr,
  \ \ t \in [s,s+T]
  \end{align}
  is, by definition, a family $(\ell(t))_{t \in [s,s+T]} \subset \B_{\alpha}$ such that 
 \begin{enumerate}
  \item[(i)] $[s,s+T] \ni t \longmapsto \langle k, \ell(t) \rangle$ is continuous for any $k \in \E_{\alpha}$.
  \item[(ii)] \eqref{EQ:13} holds for any $k \in \mathcal{Y}$ and all $\alpha'' > \alpha$.
 \end{enumerate}
 \end{enumerate}
\end{Definition}
Note that, as before, the right-hand sides in \eqref{EQ:12} and \eqref{EQ:13} are independent of the particular choice of $\alpha'' > \alpha$.
The following is our main result for the dual Cauchy problems.
\begin{Theorem}\label{LINEARTH:11}
 Let $(A(t))_{t \geq 0}$ be given as in (A1) and let 
 $(B(t))_{t \geq 0}$ satisfy (B1) and (B2).
 Let $\mathcal{Y} \subset \bigcap_{\beta > \alpha_*} \E_{\beta}$ be such that $\mathcal{Y}$ is dense in $\E_{\alpha}$ is dense for any $\alpha > \alpha_*$.
 Fix $\alpha' < \alpha$ and $\ell \in \B_{\alpha'}$. Then the following assertions hold:
 \begin{enumerate}
  \item[(a)] Let $t > 0$, $T \in (0,t]$, and suppose that (A2) -- (A4) are satisfied. 
  Then each solution $(\ell(s))_{t - T \leq s \leq t} \subset \B_{\alpha}$ to \eqref{EQ:12} satisfies
  \[
   \ell(s) = W_{\alpha' \alpha}(t,s)^*\ell, \ \ \forall s \text{ with } t - \min\{T, T(\alpha',\alpha)\} < s \leq t.
  \]
 Moreover, for all $k \in \E_{\alpha}$ and $\alpha'' \in (\alpha', \alpha)$, the function
 \[
   (t - \min\{ T(\alpha', \alpha), T(\alpha', \alpha'')\} , t] \ni s \longmapsto \langle k, W_{\alpha' \alpha}(t,s)^* \ell \rangle
 \]
 is continuously differentiable with
 \[
  \frac{d}{ds} \langle k, W_{\alpha' \alpha}(t,s)^* \ell \rangle = - \langle (A_{\alpha'' \alpha}(s) + B_{\alpha \alpha''}(s))k, W_{\alpha' \alpha''}(t,s)^* \ell \rangle.
 \]
  \item[(b)] Let $s \geq 0$ and $T > 0$. Suppose that (A2)$^*$ -- (A4)$^*$ are satisfied. 
 Then each solution $(\ell(t))_{t \in [s, s+T]} \subset \B_{\alpha}$ to \eqref{EQ:13} satisfies
 \[
  \ell(t) = W_{\alpha' \alpha}(s,t)^* \ell, \ \ \forall t \text{ with } s \leq t < s + \min\{T, T(\alpha', \alpha)\}.
 \]
 Moreover, for all $k \in \E_{\alpha}$ and $\alpha'' \in (\alpha', \alpha)$, the function
 \[
  [s, s + \min\{ T(\alpha', \alpha), T(\alpha', \alpha'')\}) \ni t \longmapsto \langle k, W_{\alpha' \alpha}(s,t)^* \ell \rangle
 \]
 is continuously differentiable with
 \[
  \frac{d}{dt} \langle k, W_{\alpha' \alpha}(s,t)^* \ell \rangle = \langle (A_{\alpha \alpha''}(t) + B_{\alpha \alpha''}(t))k, W_{\alpha' \alpha''}(s,t)^* \ell \rangle.
 \]
 \end{enumerate}
\end{Theorem}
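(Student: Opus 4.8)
The plan is to treat the two parts symmetrically and focus on part (a), since part (b) follows by the analogous argument using the backward evolution system and Theorem \ref{LINEARTH:03} in place of Theorem \ref{THEOREM:00}. First I would record what we already know: Theorem \ref{THEOREM:00} and Corollary \ref{CORR:00} provide the operators $W_{\alpha'\alpha''}(t,s)$ and their differentiability in $s$, namely \eqref{LINEAR:13}, and hence the adjoints $W_{\alpha'\alpha}(t,s)^*$ are well defined on $\B_\alpha$ for $0 \le t-s < T(\alpha',\alpha)$. The displayed differentiability formula for $s \longmapsto \langle k, W_{\alpha'\alpha}(t,s)^*\ell\rangle$ is then immediate: for $k \in \E_\alpha$ we have $\langle k, W_{\alpha'\alpha}(t,s)^*\ell\rangle = \langle W_{\alpha\alpha'}(t,s)k, \ell\rangle$, and by Corollary \ref{CORR:00}.(b) the map $s \longmapsto W_{\alpha\alpha'}(t,s)k$ is $C^1$ in $\E_{\alpha'}$ with derivative $-W_{\alpha''\alpha'}(t,s)(A_{\alpha\alpha''}(s)+B_{\alpha\alpha''}(s))k$; pairing with the continuous functional $\ell \in \B_{\alpha'}$ and using the definition of the adjoint gives exactly the stated formula. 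So the differentiability part is routine once the uniqueness part is in hand; I would dispatch it in a sentence or two at the end.

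The substantive part is showing that any solution $(\ell(s))$ to \eqref{EQ:12} must coincide with $W_{\alpha'\alpha}(t,s)^*\ell$. The key observation is that for $k \in \mathcal{Y}$ we have $k \in \E_\beta$ for every $\beta$, so $W_{\alpha\beta}(t,\cdot)k$ makes sense for $\beta$ both above and below $\alpha$, and $\langle W_{\alpha\alpha}(t,s)k, \ell(s)\rangle$ can be differentiated in $s$ by combining the $C^1$-in-$s$ property of $W_{\cdot\cdot}(t,s)k$ from Corollary \ref{CORR:00}.(b) with the integral equation \eqref{EQ:12} for $\ell(s)$. Concretely, fix $k \in \mathcal{Y}$, pick intermediate indices $\alpha < \alpha_1 < \alpha''$ (one needs the freedom in $\alpha''$ guaranteed by the remark after the definition and by $\mathcal{Y} \subset \bigcap_\beta \E_\beta$), and consider the function $r \longmapsto \langle W_{\alpha\alpha_1}(t,r)k, \ell(r)\rangle$ on a suitable subinterval of $(t-\min\{T,T(\alpha',\alpha)\}, t]$. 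Its derivative in $r$ has two contributions: from the $r$-dependence of $W$, which by Corollary \ref{CORR:00}.(b) produces $-\langle W_{\alpha''\alpha_1}(t,r)(A_{\alpha\alpha''}(r)+B_{\alpha\alpha''}(r))k, \ell(r)\rangle$ wait — more carefully, the derivative of $W_{\alpha\alpha_1}(t,r)k$ is $-W_{\alpha_2\alpha_1}(t,r)(A+B)_{\alpha\alpha_2}(r)k$ for an intermediate $\alpha_2$; from the $r$-dependence of $\ell(r)$, which by \eqref{EQ:12} (applied with $k$ replaced by $W_{\alpha_2\alpha_1}(t,r)k \in \mathcal Y$-closure, or better: by differentiating the integral identity) produces $+\langle (A+B)_{\alpha'''\alpha_1}W_{\alpha\alpha_1}(t,r)k, \ell(r)\rangle$. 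These two terms are designed to cancel, using the intertwining relations \eqref{LINEAR:00}–\eqref{LINEAR:01} for products, so that the function is constant in $r$. Evaluating at $r = t$ (where $W_{\alpha\alpha_1}(t,t) = i$) gives $\langle k, \ell(t)\rangle = \langle k, \ell\rangle$, wait that is just the initial condition; evaluating at a general $r=s$ and comparing endpoints $r=s$ and $r=t$ gives $\langle W_{\alpha\alpha_1}(t,s)k, \ell(s)\rangle = \langle i_{\alpha\alpha_1}k, \ell\rangle = \langle k, W_{\alpha_1\alpha}(t,s)^* \ell\rangle$ — hmm, I must track the indices so that the left side is $\langle k, \ell(s)\rangle$ and the right is $\langle k, W_{\alpha'\alpha}(t,s)^*\ell\rangle$. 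The correct bookkeeping: one wants the "outer" lower index to drop to $\alpha'$ so that $\ell \in \B_{\alpha'}$ can be applied, and the chain of \eqref{LINEAR:00}-type identities lets one rewrite $\langle W(t,s)k,\ell\rangle$ as $\langle k, W(t,s)^*\ell\rangle$ with $\ell$ sitting in $\B_{\alpha'}$.

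The main obstacle, and the step I would spend the most care on, is the differentiability of $r \longmapsto \langle W_{\cdot\cdot}(t,r)k, \ell(r)\rangle$ and the exact cancellation: one must choose the auxiliary indices $\alpha < \cdots < \alpha''$ and the subinterval carefully so that all the $W$-operators involved are defined (i.e. the relevant time-differences stay below the appropriate $T(\cdot,\cdot)$), so that the strong continuity/differentiability hypotheses of Corollary \ref{CORR:00} apply, and so that the integrand in \eqref{EQ:12} — which is only assumed continuous in $r$, not differentiable — can still be handled. The clean way is not to differentiate $\ell(r)$ directly but to substitute the integral equation \eqref{EQ:12} into $\langle W_{\alpha\alpha_1}(t,s)k, \ell(s)\rangle$ and use Fubini together with the integral equation \eqref{LINEAR:13}-type identity for $W$ (Corollary \ref{CORR:00}.(b) in integrated form, which is just Theorem \ref{THEOREM:00}.(d) read backwards in $s$); then the double integrals match term by term and one is left with $\langle W_{\alpha\alpha_1}(t,s)k,\ell(s)\rangle$ expressed purely through $\ell$ and $W(t,s)^*\ell$. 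Then, as in the proof of Corollary \ref{CORR:00}.(c), one first obtains the identity on a short time interval of length $\tfrac12 T_0$ for some $T_0 > 0$ depending on the scale parameters and $K, M$, and extends to the full interval $(t-\min\{T,T(\alpha',\alpha)\},t]$ by the step-by-step iteration argument used there, each step being legitimate because at the splitting point the two solutions already agree as elements of every $\B_\beta$. Finally, density of $\mathcal{Y}$ in $\E_\alpha$ and the continuity assumption (i) on $s \longmapsto \langle k, \ell(s)\rangle$ upgrade the identity from $k \in \mathcal{Y}$ to all $k \in \E_\alpha$, which is exactly the assertion $\ell(s) = W_{\alpha'\alpha}(t,s)^*\ell$ in $\B_\alpha$. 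Part (b) is proved the same way with $(t,s)$ and forward/backward roles interchanged.
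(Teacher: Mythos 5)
Your outline of the easy parts (the differentiability statement via duality with Corollary \ref{CORR:00}, the density upgrade from $\mathcal{Y}$ to $\E_{\alpha}$ using (i) and the uniform boundedness of $\Vert \ell(r)\Vert$, and the step-by-step extension in time) matches the paper. But the core of your uniqueness argument contains a genuine error: the function $r \longmapsto \langle W_{\alpha\alpha_1}(t,r)k,\ell(r)\rangle$ is \emph{not} constant, and the two contributions to its derivative do not cancel. From \eqref{EQ:12} one gets $\frac{d}{dr}\langle h,\ell(r)\rangle = -\langle (A(r)+B(r))h,\ell(r)\rangle$ (note the minus sign: the integral runs from $r$ to $t$), while Corollary \ref{CORR:00}.(b) gives $\partial_r W(t,r)k = -W(t,r)(A(r)+B(r))k$. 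So the total derivative is
\[
 -\langle W(t,r)(A(r)+B(r))k,\ell(r)\rangle \;-\; \langle (A(r)+B(r))W(t,r)k,\ell(r)\rangle,
\]
i.e.\ both terms carry the \emph{same} sign, and even with opposite signs they would only cancel if $W(t,r)$ commuted with $A(r)+B(r)$, which is false in general in the time-inhomogeneous setting. The object that makes this strategy work is $r\longmapsto \langle W_{\alpha\beta}(r,s)k,\ell(r)\rangle$ with $s$ fixed, differentiated in its \emph{first} time argument via Corollary \ref{CORR:00}.(a): there $\partial_r W(r,s)k = +(A(r)+B(r))W(r,s)k$, the two contributions cancel exactly with no commutation needed, and evaluating at $r=s$ and $r=t$ yields $\langle k,\ell(s)\rangle = \langle W(t,s)k,\ell\rangle = \langle k,W(t,s)^*\ell\rangle$. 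You sense the bookkeeping is off ("I must track the indices\dots") but never resolve it, and as written the cancellation claim is the load-bearing step.

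It is also worth noting that the paper avoids this product-rule route entirely: it sets $w(s):=\ell(s)-W_{\alpha'\alpha}(t,s)^*\ell$, shows after the density extension that $\langle k,w(s)\rangle = \int_s^t\langle (A(r)+B(r))_{\alpha''\alpha}k, w(r)\rangle\,dr$, and then iterates this homogeneous integral identity $n$ times, estimating the resulting product $Q_n=(A(t_n)+B(t_n))\cdots(A(t_1)+B(t_1))$ by the Ovcyannikov scale-splitting so that the $n$-fold integral is $O\bigl((C(t-s))^n n^n/n!\bigr)\to 0$ for $t-s$ small, followed by the same time-stepping you describe. Your "substitute \eqref{EQ:12} and use Fubini" fallback is in the same spirit as this iteration but is left entirely unexecuted, so it cannot carry the proof as it stands. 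Either repair — the corrected pairing $\langle W(r,s)k,\ell(r)\rangle$, or the paper's iteration — would complete the argument; part (b) then indeed follows symmetrically.
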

\begin{proof}
 We will prove only assertion (a). Assertion (b) can be deduced by similar arguments.
 It is clear that $\ell(s)$ given by $\ell(s) = W_{\alpha' \alpha}(t,s)^* \ell$ is, by duality and Corollary \ref{CORR:00}, 
 a solution to \eqref{EQ:12}.
 Let us show that it is the only solution. Take any solution $(\ell(s))_{t - T \leq s \leq t} \subset \B_{\alpha}$  to \eqref{EQ:12}. 
 Then, for all $\alpha'' > \alpha$ and $k \in \E_{\alpha''}$, $s \longmapsto \langle i_{\alpha'' \alpha}k, \ell(s)\rangle$ is continuously differentiable with
 \begin{align}\label{EQ:20}
  \frac{d}{ds} \langle i_{\alpha'' \alpha}k, \ell(s) \rangle = - \langle (A_{\alpha'' \alpha}(s) + B_{\alpha'' \alpha}(s))k, \ell(s) \rangle.
 \end{align}
 Indeed, let $(k_n)_{n \in \N} \subset \mathcal{Y}$ be such that $\| k_n - k\|_{\alpha''} \longrightarrow 0$ as $n \to \infty$. Then
 \begin{align*}
  \langle i_{\alpha'' \alpha}k_n, \ell(s) \rangle = \langle i_{\alpha'' \alpha'}k_n, \ell \rangle
  + \int \limits_{s}^{t} \langle (A_{\alpha'' \alpha}(r) + B_{\alpha'' \alpha}(r))_{\alpha'' \alpha}k_n, \ell(r) \rangle dr, \ \ n \geq 1
 \end{align*}
 and one has
 \[
  \|  (A_{\alpha'' \alpha}(r) + B_{\alpha'' \alpha}(r))_{\alpha'' \alpha}k_n \|_{\alpha}
  \leq \sup \limits_{n \geq 1} \| k_n \|_{\alpha''} \sup \limits_{r \in [s,t]} \|  (A_{\alpha'' \alpha}(r) + B_{\alpha'' \alpha}(r))_{\alpha'' \alpha} \|_{\alpha'' \alpha} < \infty.
 \]
 Since $(A_{\alpha'' \alpha}(r) + B_{\alpha'' \alpha}(r))_{\alpha'' \alpha} \in L(\E_{\alpha''}, \E_{\alpha})$, we can take the limit $n \to \infty$
 to deduce that, for any $s \in [t-T,t]$,
 \begin{align*}
  \langle i_{\alpha'' \alpha}k, \ell(s) \rangle = \langle i_{\alpha'' \alpha'}k,  \ell \rangle
  + \int \limits_{s}^{t} \langle (A_{\alpha'' \alpha}(r) + B_{\alpha'' \alpha}(r))_{\alpha'' \alpha}k, \ell(r) \rangle dr,
 \end{align*}
 i.e. \eqref{EQ:20} is satisfied.

 Define $w(s) := \ell(s) - W_{\alpha' \alpha}(t,s)^*\ell$. Then, for all $\alpha'' > \alpha$ and $k \in \E_{\alpha''}$, it follows that
 \begin{align*}
  \langle k, w(s) \rangle &=  \int \limits_{s}^{t} \langle \left( A(r) + B(r)\right)_{\alpha'' \alpha}k, w(r) \rangle dr
 \\ &=  \int \limits_{s}^{t}\cdots \int \limits_{s}^{t_{n-1}}\langle Q_n(t,t_1,\dots,t_n)_{\alpha'' \alpha}k, w(r)\rangle dt_n \dots dt_1,
 \end{align*}
 where $Q_n(t,t_1,\dots, t_n)_{\alpha'' \alpha} =  \left(  (A(t_n) + B(t_n)) \cdots (A(t_1) + B(t_1))\right)_{\alpha'' \alpha}$
 can be estimated in the same way as in Theorem \ref{THEOREM:00}, i.e.
 \[
  \| Q_n(t,t_1,\dots, t_n)_{\alpha'' \alpha} \|_{\alpha'' \alpha} \leq \left(\frac{2n K M(\alpha'')}{\alpha'' - \alpha}\right)^n, \ \ n \geq 1.
 \]
 Hence we obtain
 \begin{align*}
  | \langle k, w(s) \rangle | &\leq \int \limits_{s}^{t}\cdots \int \limits_{s}^{t_{n-1}} \| Q_n(t,t_1,\dots, t_n)_{\alpha'' \alpha}\|_{\alpha'' \alpha} \| k \|_{\alpha''} \sup \limits_{r \in [s,t]}\| w(r) \|_{\alpha}dt_n \dots dt_1
 \\ &\leq \| k \|_{\alpha''} \sup \limits_{r \in [s,t]} \| w(r) \|_{\alpha} \left(\frac{2n K M(\alpha'')}{\alpha'' - \alpha}\right)^n \frac{(t-s)^n}{n!}.
 \end{align*}
 Since for $0 \leq t - s < \min\{ T, T(\alpha', \alpha), T(\alpha, \alpha'')\}$ the right-hand side tends to zero as $n \to \infty$, we conclude that
 $w(s) = 0$. Letting $\alpha'' = \alpha + 1$ yields for $T_0(\alpha', \alpha) = \min \left\{T, T(\alpha', \alpha), \frac{1}{2eKM(\alpha+1)} \right\}$
 \[
 w(s) = 0, \ \ t - \frac{1}{2}T_0(\alpha', \alpha) \leq s \leq t.
 \]
 Since $w(t - \frac{1}{2}T_0(\alpha', \alpha)) = 0 \in \B_{\alpha'}$, we may iterate this argument by changing $t \longmapsto t - \frac{1}{2}T_0(\alpha', \alpha)$.
 This proves the assertion.
\end{proof}
Let us close this section with the example considered before.
\begin{Remark}
 Let $\E = (\E_{\alpha})_{\alpha > \alpha_*}$ be a scale of Banach spaces and let, for each $\alpha > \alpha_*$,
 $(T_{\alpha}(t))_{t \geq 0}$ be a strongly continuous
 semigroup with generator $(A_{\alpha}, D(A_{\alpha}))$ satisfying
 properties (i) and (ii) of Remark \ref{TIMEHOMOGENEOUS}.
 Let $B(t)$ be given with properties (B1) and (B2).
 Then all results of this section are applicable to this case.
\end{Remark}

\section{Infinite system of ordinary differential equations}
Let $(a_{nk})_{n,k = 0}^{\infty}$ be an infinite matrix having complex-valued entries and $x = (x_n)_{n=0}^{\infty}$ be the initial condition. 
We apply our results to the infinite system of ordinary differential equations 
\begin{align}\label{LINEAR:47}
 \frac{d u_n(t)}{dt} = \sum \limits_{k=0}^{\infty}a_{nk}u_k(t), \ \ u_n(0) = x_n, \ \ n \in \N_0,
\end{align}
where $u(t) = (u_n(t))_{n = 0}^{\infty}$ is a sequence of complex numbers.
For $\alpha \in \R$ let
\[
 \E_{\alpha} := \left \{ u = (u_n)_{n=0}^{\infty} \ | \ \| u \|_{\alpha} := \sum \limits_{n=0}^{\infty}|u_n| e^{\alpha n} < \infty \right\}.
\]
be the Banach space of all complex-valued sequences with norm $\| \cdot \|_{\alpha}$.
Then $\E = (\E_{\alpha})_{\alpha}$ defines a scale of Banach spaces in the sense of \eqref{LINEAR:29}.
\begin{Remark}
 Let $B$ be a linear mapping given by
 \[
  (Bu)_n := \sum \limits_{k = 0}^{\infty}b_{nk}u_k, \ \ n \in \N_0,
 \]
 provided $u = (u_n)_{n = 0}^{\infty}$ is such that the series is absolutely convergent.
 Then $B \in L(\E)$ iff one has
 \[
  \sup \limits_{k \geq 0} e^{- \alpha k} \sum \limits_{n=0}^{\infty}|b_{nk}|e^{\alpha' n} < \infty, \ \ \alpha' < \alpha.
 \]
 In such a case it holds that $B_{\alpha \alpha'}u = Bu$ and 
 $\| B_{\alpha \alpha'}\| = \sup_{k \geq 0} e^{- \alpha k} \sum_{n=0}^{\infty}|b_{nk}|e^{\alpha' n}$. 
\end{Remark}
We study \eqref{LINEAR:47} under the following conditions
\begin{enumerate}
 \item[(E1)] There exist $(d_n)_{n \in \N_0} \subset (0,\infty)$ and $(b_{nk})_{n,k \in \N_0}, (c_{nk})_{n,k \in \N_0} \subset \C$ such that
\[
 a_{nk} = - \delta_{nn}d_{n} + b_{nk} + c_{nk}, \ \ n,k \in \N_0,
\]
where $\delta_{nk}$ denotes the Kronecker-delta symbol.
\item[(E2)] There exists $\alpha_* \in \R$ and, for all $\alpha > \alpha_*$, a constant $q(\alpha) \in (0,1)$ with
 \[
  e^{-\alpha k}\sum \limits_{n=0}^{\infty}|b_{nk}|e^{\alpha n} \leq q(\alpha)d_k, \ \ k \in \N_0, \ \ \alpha > \alpha_*.
 \]
 \item[(E3)] We have $\sup_{n \in \N}\ d_n e^{- \nu n} < \infty$ for all $\nu > 0$.
 \item[(E4)] There exists a continuous increasing function $M: (\alpha_*, \infty) \longrightarrow (0,\infty)$ such that
 \[
  e^{-\alpha k}\sum \limits_{n=0}^{\infty}|c_{nk}|e^{\alpha' n} \leq \frac{M(\alpha)}{\alpha - \alpha'}, \ \ k \in \N_0, \ \ \alpha' < \alpha.
 \]
\end{enumerate}
Let us show that under conditions (E1) -- (E4), equation \eqref{LINEAR:47} is a particular case of the results obtained in Section 3.
\begin{Theorem}
 Suppose that (E1) -- (E4) are satisfied. 
 Then, for all $\alpha' < \alpha$ and $x \in \E_{\alpha}$, 
 there exists a unique classical solution $(u(t))_{t \in [0,T(\alpha', \alpha))}$ in $\E_{\alpha'}$ with $T(\alpha', \alpha) = \frac{\alpha - \alpha'}{2eM(\alpha)}$
 to \eqref{LINEAR:47}.
\end{Theorem}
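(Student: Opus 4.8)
The strategy is to recognize equation \eqref{LINEAR:47} as an instance of the abstract perturbed Cauchy problem \eqref{EQ:04} with $A(t)$ the diagonal part and $B(t)$ the remainder, and then invoke Theorem~\ref{THEOREM:00} together with Corollary~\ref{CORR:00}. First I would split the matrix according to (E1): let $A$ be the diagonal operator with entries $-d_n\delta_{nk}$ together with the contribution of $(b_{nk})$, and let $B$ be the operator associated with $(c_{nk})$. More precisely, I would put $A_{nk} := -\delta_{nk}d_n + b_{nk}$ and $B_{nk} := c_{nk}$, so that $a_{nk} = A_{nk} + B_{nk}$. The operator $B$ is bounded in the scale $\E$ by (E4) and the preceding Remark, with $\|B_{\alpha\alpha'}\|_{\alpha\alpha'} \le M(\alpha)/(\alpha-\alpha')$, which is exactly (B2); time-independence gives (B1) trivially. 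Note the constant in $T(\alpha',\alpha) = \frac{\alpha-\alpha'}{2eM(\alpha)}$ corresponds to $K=1$ in the abstract theorem.

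The heart of the argument is to build the evolution system $V(t,s)$ (here time-homogeneous, so a semigroup $T_\alpha(t)$) generated by $A$ in each $\E_\alpha$ and to verify (A2)--(A4), in particular the crucial uniform bound $\|V_{\alpha\alpha'}(t,s)\|_{\alpha\alpha'}\le K$ with $K=1$. For this I would observe that $A$ generates a strongly continuous (indeed, since the off-diagonal part is $A$-bounded by (E2), a perturbation of the diagonal) semigroup on each $\E_\alpha$. The diagonal part $-d_n$ generates the contraction semigroup $u\mapsto (e^{-d_n t}u_n)_n$ on $\E_\alpha$ (using $d_n>0$); condition (E3) ensures the diagonal operator is densely defined with the finitely-supported sequences as a core, and that $\E_\alpha \subset D$ of the diagonal operator acting into $\E_{\alpha'}$ for $\alpha'<\alpha$ (since $d_n e^{\alpha' n} = d_n e^{(\alpha'-\alpha)n}e^{\alpha n}$ and $d_n e^{(\alpha'-\alpha)n}$ is bounded by (E3)). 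The off-diagonal $(b_{nk})$ part is then a bounded perturbation in the sense that $\|b_\cdot u\|_\alpha \le q(\alpha)\|(d_n u_n)_n\|_\alpha$ by (E2) with $q(\alpha)<1$; a standard bounded-perturbation-of-a-contraction-semigroup estimate (Dyson--Phillips series, or the simple observation that $-d_n + b_{nk}$ still has the right sign structure because $q(\alpha)<1$) shows the resulting semigroup $T_\alpha(t)$ is a contraction on $\E_\alpha$, giving (A4) with $K=1$. The compatibility (A2), i.e. $T_{\alpha'}(t)|_{\E_\alpha} = i_{\alpha\alpha'}T_\alpha(t)$, holds because the operators act by the same matrix formula on all scales, and (A3) follows from the integral equation for the semigroup as in Remark~\ref{TIMEHOMOGENEOUS} and the Example preceding it.

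Once (A1)--(A4) and (B1), (B2) are in place, Theorem~\ref{THEOREM:00} produces $W(t,s)$ on $0\le t-s<T(\alpha',\alpha)$, and Corollary~\ref{CORR:00}(a),(c) gives that $t\mapsto W_{\alpha\alpha'}(t,0)x$ is the unique classical solution to \eqref{EQ:04} on $[0,T(\alpha',\alpha))$ in $\E_{\alpha'}$; I would then check that \eqref{EQ:04} written out componentwise is precisely \eqref{LINEAR:47}, i.e. that the abstract notion of classical solution specializes to the componentwise ODE system with the series converging absolutely — this is where I would use that $(A(t)+B(t))_{\alpha'\alpha''}\in L(\E_{\alpha'},\E_{\alpha''})$ for $\alpha''<\alpha'$ so that all the series $\sum_k a_{nk}u_k(t)$ converge and the map into $\E_{\alpha''}$ is $C^1$. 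The main obstacle I anticipate is not any single hard estimate but the verification that $A$ genuinely generates a contraction evolution system satisfying (A4) with $K=1$ uniformly across the scale: one must simultaneously control the unboundedness of the diagonal part (handled by (E3) and the embeddings) and ensure the off-diagonal $b$-part does not spoil contractivity (handled by $q(\alpha)<1$ in (E2)); getting these two to cooperate cleanly — perhaps by a direct Gronwall/Dyson-series bound on $\|T_\alpha(t)u\|_\alpha$ rather than abstract generation theorems — is the technical crux. Everything else is a routine translation through the Remark characterizing $L(\E)$ and an application of the Section~3 machinery.
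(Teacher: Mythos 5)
Your route is the paper's route: split $a_{nk}=(-\delta_{nk}d_n+b_{nk})+c_{nk}$, let the first part generate the unperturbed (time-homogeneous) evolution system with $K=1$, treat the $c$-part as the perturbation satisfying (B1)--(B2) via (E4) and the Remark characterizing $L(\E)$, and conclude with Theorem~\ref{THEOREM:00} and Corollary~\ref{CORR:00}; the uses you make of (E2) and (E3) also match the paper's. The one place where your argument is not yet a proof is precisely the step you flag as the crux: that the diagonal-plus-$b$ part generates a \emph{contraction} semigroup on each $\E_\alpha$, i.e.\ (A4) with $K=1$. Neither justification you sketch works as stated. The Dyson--Phillips series applies to bounded perturbations, whereas here $B_\alpha$ is only $A_\alpha$-bounded with relative bound $q(\alpha)<1$ (and even a bounded perturbation of a contraction generator need not generate contractions); and the ``right sign structure'' remark does not apply directly because the $b_{nk}$ are complex. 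The paper closes this gap by a two-step use of positive-perturbation theory: first it applies \cite{TV06} and \cite{AR91} to the modulus operator $(B'_\alpha u)_n=\sum_k|b_{nk}|u_k$, using $\Vert B'_\alpha u\Vert_\alpha\le q(\alpha)\Vert A_\alpha u\Vert_\alpha$ with $q(\alpha)<1$ to obtain a holomorphic positive contraction semigroup generated by $A_\alpha+B'_\alpha$ on $D(A_\alpha)$, and then uses a domination theorem from \cite{AR91} to transfer generation and contractivity to $A_\alpha+B_\alpha$. Your fallback idea of a direct norm bound is viable --- in the weighted $\ell^1$ norm the column-sum condition (E2) yields dissipativity of $A_\alpha+B'_\alpha$, and hence of $A_\alpha+B_\alpha$ by $|(B_\alpha u)_n|\le (B'_\alpha|u|)_n$ --- but one must still supply a generation (range) argument, which is exactly what the cited perturbation theorems provide; as written, this step is a gap rather than a proof.
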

\begin{proof}
 Define linear mappings $A,B,C$ by
 \begin{align*}
  (Au)_n = - d_n u_n, \ \ (Bu)_n = \sum \limits_{k=0}^{\infty}b_{nk}u_k,\ \ (Cu)_n = \sum \limits_{k=0}^{\infty}c_{nk}u_k,
 \end{align*}
 where $n \in \N_0$ and $u = (u_n)_{n=0}^{\infty}$ is such that the sums are absolutely convergent.
 In view of (E1), \eqref{LINEAR:47} is equivalent to
 \[
  \frac{du_n(t)}{dt} = (Au(t))_n + (Bu(t))_n + (Cu(t))_n, \ \ u_n(0) = x_n, \ \ n \in \N_0.
 \]
 Hence it suffices to show that Theorem \ref{THEOREM:00} is applicable.

 For $\alpha \in (\alpha_*, \alpha^*)$ let $(U_{\alpha}(t)u)_n := e^{- t d_n}u_n$.
 Then $(U_{\alpha}(t))_{t \geq 0}$ is a holomorphic, positive semigroup with generator $(A_{\alpha}, D(A_{\alpha}))$ given by
 \[
  (A_{\alpha}u)_n = (Au)_n = - d_n u_n, \qquad u \in D(A_{\alpha}) = \{ v \in \E_{\alpha} \ | \ (d_n u_n)_{n = 0}^{\infty} \in \E_{\alpha} \}.
 \]
 Define $(B_{\alpha}, D(A_{\alpha}))$ and $(B_{\alpha}', D(A_{\alpha}))$ by 
 \[
  (B_{\alpha}u)_n = (Bu)_n = \sum \limits_{k = 0}^{\infty}b_{nk}u_k, \qquad
  (B_{\alpha}'u)_n = \sum \limits_{k=0}^{\infty}|b_{nk}|u_k, \ \ u \in D(A_{\alpha}).
 \]
 Then $(B_{\alpha}', D(A_{\alpha}))$ is a well-defined positive operator satisfying
 \begin{align*}
  \| B_{\alpha}' u\|_{\alpha} \leq \sum \limits_{k=0}^{\infty}e^{\alpha k} \left( \sum \limits_{n=0}^{\infty}|b_{nk}|e^{\alpha n} \right)e^{- \alpha k}|u_k|
  \leq q(\alpha) \| A_{\alpha} u \|_{\alpha}, \ \ u \in D(A_{\alpha}).
 \end{align*}
 Hence by \cite[Theorem 2.2]{TV06} and \cite[Theorem 1.1]{AR91} it follows that $(A_{\alpha} + B'_{\alpha},  D(A_{\alpha}))$ 
 is the generator of a holomorphic contraction semigroup on $\E_{\alpha}$. 
 Applying \cite[Theorem 1.2]{AR91} it follows that also $(A_{\alpha} + B_{\alpha}, D(A_{\alpha}))$ is the generator
 of a holomorphic contraction semigroup $(T_{\alpha}(t))_{t \geq 0}$ on $\E_{\alpha}$. 
 Then $(T_{\alpha}(t))_{t \geq 0}$ with generator $(A_{\alpha} + B_{\alpha}, D(A_{\alpha}))$ satisfies the conditions
 of Remark \ref{TIMEHOMOGENEOUS}.
 Moreover, one has $A, B, C \in L(\E)$ and $C$ satisfies conditions (B1) and (B2).
 Hence Theorem \ref{THEOREM:00} is applicable.
\end{proof}
The next statement shows that the unique solution to \eqref{LINEAR:47} can be appxoimated by solutions $u^N$ to certain finite-dimensional
ordinary differential equations. Such result may be useful for numerical simulations.
\begin{Theorem}
 Suppose that (E1) -- (E4) are satisfied. Define, for each $N \geq 1$, a new sequence $(a_{nk}^N)_{n,k=0}^{\infty}$ 
 via $a^N_{nk} = - \delta_{nn}d_n^N + b^N_{nk} + c^N_{nk}$, by setting 
 \[
  d_n^N = \1_{\{n \leq N\}} d_n, \qquad b^N_{nk} = \1_{ \{ n,k \leq N\} }b_{nk}, \qquad c_{nk}^N = \1_{\{ n,k \leq N\}} c_{nk}.
 \]
 Then, for all $\alpha' < \alpha$ and $x \in \E_{\alpha}$, there exists a unique classical solution $(u(t))_{0 \leq t < T(\alpha', \alpha)} \subset \E_{\alpha'}$
 to \eqref{LINEAR:47} with $T(\alpha', \alpha) = \frac{\alpha - \alpha'}{2eM(\alpha)}$, and for all $N \geq 1$ there exist unique classical solutions 
 $(u^N(t))_{0 \leq t < T(\alpha', \alpha)} \subset \E_{\alpha'}$ to
 \[
  \frac{d u_n^N(t)}{dt} = \sum \limits_{k = 0}^{\infty}a_{nk}^N u_k(t), \ \ u^N_n(0) = x_n, \ \ n \in \N_0.
 \]
 Moreover it holds that
 \[
  \lim \limits_{N \to \infty} \sup \limits_{t \in [0,T]} \sum \limits_{n = 0}^{\infty}|u_n^N(t) - u_n(t)| e^{\alpha n} = 0,  \ \ T \in (0, T(\alpha', \alpha)).
 \]
\end{Theorem}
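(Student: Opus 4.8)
The plan is to reduce the statement to the abstract stability result, Theorem~\ref{LINEARTH:05}, applied to the sequence of truncated systems. As in the proof of the foregoing theorem, introduce the operators $A,B,C \in L(\E)$ with matrices $(-\delta_{nk}d_n)$, $(b_{nk})$, $(c_{nk})$, so that $A+B$ generates, in the sense of Remark~\ref{TIMEHOMOGENEOUS}, a forward evolution system $V$ with $V_{\alpha\alpha'}(t,s)=T_{\alpha'}(t-s)i_{\alpha\alpha'}$ built from consistent contraction semigroups $T_\alpha$ on $\E_\alpha$, $C$ satisfies (B1), (B2) with the $M$ of (E4), and $u(t)=W_{\alpha\alpha'}(t,0)x$, where $W$ is the family of Theorem~\ref{THEOREM:00} attached to $V$ and $C$ (this is exactly the content of the foregoing theorem together with Corollary~\ref{CORR:00}). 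The first observation is that the truncated data $d^N,b^N,c^N$ satisfy (E1)--(E4) again \emph{with the same} $q(\alpha)$ and $M(\alpha)$: replacing entries by $0$ only decreases the left-hand sides in (E2) and (E4), and (E2) survives because $b^N_{nk}$ and $d^N_k$ carry the identical truncation factor $\1_{\{k\le N\}}$, while (E3) is trivial. Hence the foregoing theorem applies to each truncated system with the unchanged $T(\alpha',\alpha)=\frac{\alpha-\alpha'}{2eM(\alpha)}$, giving the unique classical solutions $u^N$, and Corollary~\ref{CORR:00} identifies $u^N(t)=W^{(N)}_{\alpha\alpha'}(t,0)x$, where $W^{(N)}$ is the Theorem~\ref{THEOREM:00}-family attached to $V^{(N)}_{\alpha\alpha'}(t,s)=T^N_{\alpha'}(t-s)i_{\alpha\alpha'}$ and $C^N$.

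Next I would verify that the families $A^N+B^N$, $V^{(N)}$, $C^N$ fulfil (S1)--(S3). Since $A^N,B^N,C^N$ are finite-rank they are bounded on every $\E_\alpha$, hence lie in $L(\E)$, and being $t$-independent the maps $t\mapsto (A^N+B^N)_{\alpha\alpha'}$, $t\mapsto C^N_{\alpha\alpha'}$ are operator-norm continuous; this gives (S1) and the continuity in (S3). Re-running the dissipativity argument of the foregoing proof with the truncated data (the truncated (E2) still has the same $q(\alpha)<1$) shows that $A^N_\alpha+B^N_\alpha$ generates a contraction semigroup $T^N_\alpha$, and the $T^N_\alpha$ are consistent across the scale because they are all the exponential of one and the same finite matrix; by Remark~\ref{TIMEHOMOGENEOUS} the family $V^{(N)}$ then satisfies (A2), (A3) for $A^N+B^N$, with $\|V^{(N)}_{\alpha\alpha'}(t,s)\|_{\alpha\alpha'}\le 1$, i.e. (S2) with $K=1$. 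Finally, the truncated (E4) gives $\|C^N_{\alpha\alpha'}\|_{\alpha\alpha'}\le M(\alpha)/(\alpha-\alpha')$ with $M$ independent of $N$, which is (S3).

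The only genuinely new input is the operator-norm convergence required by Theorem~\ref{LINEARTH:05}, namely \eqref{LINEAR:17} and \eqref{LINEAR:20}: $A^N+B^N\to A+B$ and $C^N\to C$ in $L(\E_\alpha,\E_{\alpha'})$ for every $\alpha'<\alpha$ (the supremum over $t$ being immaterial as the data are $t$-independent). For the diagonal part, $(A^N-A)_{\alpha\alpha'}$ is diagonal with entries $d_n$, $n>N$, so its norm is $\sup_{n>N}d_n e^{(\alpha'-\alpha)n}$; writing $d_n e^{(\alpha'-\alpha)n}=(d_n e^{-\nu n})e^{-\nu n}$ with $\nu=(\alpha-\alpha')/2$, (E3) forces this to be $O(e^{-\nu N})$. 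For $B^N-B$ one has
\begin{align*}
 \|(B^N-B)_{\alpha\alpha'}\|_{\alpha\alpha'} = \sup_{k\ge 0} e^{-\alpha k}\sum_{n}|b_{nk}|\,\1_{\{\max(n,k)>N\}}e^{\alpha' n}.
\end{align*}
Fix $\beta,\gamma\in(\alpha',\alpha)$. When $k\le N$ the indicator forces $n>N$, so $e^{\alpha' n}\le e^{(\alpha'-\beta)N}e^{\beta n}$, and bounding $e^{-\alpha k}\sum_n|b_{nk}|e^{\beta n}\le q(\beta)d_k e^{(\beta-\alpha)k}$ by (E2) and $d_k e^{(\beta-\alpha)k}$ by (E3) makes this term $O(e^{(\alpha'-\beta)N})$. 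When $k>N$ the indicator equals $1$; factoring $e^{-\alpha k}=e^{(\gamma-\alpha)k}e^{-\gamma k}$, using $e^{-\gamma k}\sum_n|b_{nk}|e^{\alpha' n}\le e^{-\gamma k}\sum_n|b_{nk}|e^{\gamma n}\le q(\gamma)d_k$ from (E2) and $d_k e^{(\gamma-\alpha)k}=O(e^{(\gamma-\alpha)k/2})$ from (E3) makes this term $O(e^{(\gamma-\alpha)N/2})$. Both vanish as $N\to\infty$; the identical computation with $c_{nk}$ and (E4) in place of $b_{nk}$ and (E2) settles $C^N-C$, and this establishes \eqref{LINEAR:17} and \eqref{LINEAR:20}.

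With (S1)--(S3), \eqref{LINEAR:17}, \eqref{LINEAR:20} in hand, and $A+B,C\in L(\E)$ from the foregoing proof, Theorem~\ref{LINEARTH:05} applies: it produces $W^{(N)}$ and $W$ as above with $T(\alpha',\alpha)=\frac{\alpha-\alpha'}{2eM(\alpha)}$ and, for every compact $\Delta_{\alpha'\alpha}\subset\{0\le t-s<T(\alpha',\alpha)\}$, $\sup_{(t,s)\in\Delta_{\alpha'\alpha}}\|W^{(N)}_{\alpha\alpha'}(t,s)k-W_{\alpha\alpha'}(t,s)k\|_{\alpha'}\to 0$. Taking $\Delta_{\alpha'\alpha}=[0,T]\times\{0\}$, which is compact in that set precisely because $T<T(\alpha',\alpha)$, and $k=x$, gives $\sup_{t\in[0,T]}\|u^N(t)-u(t)\|_{\alpha'}\to 0$, which yields the asserted convergence. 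The main obstacle is the uniform-in-$N$ bookkeeping of this plan, and in particular the intermediate-exponent estimates of the third paragraph; everything else — the preservation of (E1)--(E4) under truncation, the verification of (S1)--(S3), and the passage to the limit — is routine, and the conclusion is handed over by the abstract stability theorem.
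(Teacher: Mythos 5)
Your proof is correct and follows the route the paper clearly intends (the paper states this theorem without proof): reduce everything to the stability Theorem~\ref{LINEARTH:05} applied to the truncated systems, after checking that truncation preserves (E1)--(E4) with the same $q$ and $M$ and hence (S1)--(S3) with $K=1$; the only substantive new work is the operator-norm convergence \eqref{LINEAR:17}, \eqref{LINEAR:20}, which you establish correctly via intermediate exponents combined with (E2)--(E4). Two small remarks: for the tail $k>N$ of $C^N-C$ the computation is not literally the one used for $B$ --- one factors $e^{-\alpha k}=e^{-(\alpha-\beta)k}e^{-\beta k}$ and applies (E4) to the pair $(\alpha',\beta)$, which is what your sketch amounts to and does work; and your conclusion is convergence in $\E_{\alpha'}$ (weight $e^{\alpha' n}$), which is the only sensible reading of the statement since $u(t)$ need not lie in $\E_{\alpha}$ for $t>0$, so the weight $e^{\alpha n}$ in the printed theorem is evidently a typo.
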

\begin{proof}
 Apply Theorem \ref{LINEARTH:05}.
\end{proof}
\begin{Remark}
 Using the results of Section 3, we are also able to prove the existence of solutions to the adjoint equation
 \[
  \frac{dv_n(t)}{dt} = \sum \limits_{k = 0}^{\infty}a_{kn}v_n(t), \ \ v_n(0) = x_n, \ \ n \in \N_0,
 \]
  in the dual scale given by a weighted $\ell^{\infty}$ space.
 Uniqueness holds in such a case among all component-wise solutions.
\end{Remark}
 It is not difficult to adapt such arguments to systems of Banach-space valued differential equations. 
 Such equations arise naturally from the analysis of spatial birth-and-death processes, see, e.g., \cite{BKKK13, KK18}.

\section{The spatial logistic model in the continuum}

\subsection{Description of the model}
Let $\Gamma$ be the space of all locally finite subsets of $\R^d$, see \eqref{GAMMA}.
We endow $\Gamma$ with the smallest topology such that, for any continuous function 
$f: \R^d \longrightarrow \R$ having compact support, $\Gamma \ni \gamma \longmapsto \sum_{x \in \gamma}f(x)$ is continuous. 
Then $\Gamma$ is a Polish space, see, e.g., \cite{KK06}. 
The space of finite configurations is defined by
\[
 \Gamma_0 = \bigsqcup \limits_{n=0}^{\infty}\Gamma_0^{(n)}, 
 \ \ \Gamma_0^{(n)} = \{ \eta \subset \R^d \ | \ |\eta| = n \}, \ \ \Gamma_0^{(0)} = \{ \emptyset\}.
\]
We endow $\Gamma_0$ with the $\sigma$-algebra generated by 
cylinder sets of the form
\[
 \{ \eta \in \Gamma_0 \ | \ |\eta \cap \Lambda| = n \}, \qquad n \geq 0, \ \ \Lambda \subset \R^d \ \text{ compact. }
\]
Let $B_{bs}(\Gamma_0)$ be the space of all functions $G: \Gamma_0 \longrightarrow \R$ such that 
\begin{enumerate}
 \item[(i)] $G$ is bounded and measurable.
 \item[(ii)] There exists $N(G) \in \N$ and a compact $\Lambda(G) \subset \R^d$ with $G(\eta) = 0$, whenever $|\eta| > N(G)$
 or $\eta \cap \Lambda^c \neq  \emptyset$.
\end{enumerate}
The space of all polynomially bounded cylinder functions is defined by
\[
 \mathcal{FP}(\Gamma) = \left\{ F: \Gamma \longrightarrow \R \ | \ \exists G \in B_{bs}(\Gamma_0) \text{ with } F(\gamma) = \sum \limits_{\eta \Subset \gamma}G(\eta) \ \ \forall \gamma \in \Gamma \right\},
\]
where $\Subset$ indicates that the sum runs only over all finite subsets of $\gamma$.
\begin{Remark}\label{REMARK}
 For each $F \in \mathcal{FP}(\Gamma)$ there exists $N(G) \in \N$, $A(G) \geq 0$ and a compact $\Lambda(G) \subset \R^d$ such that 
 $F(\gamma) = F(\gamma \cap \Lambda)$ and
 $|F(\gamma)| \leq A(1+ |\gamma \cap \Lambda|)^N$.
\end{Remark}
Consider a (heuristic) Markov operator $L$ acting on $\mathcal{FP}(\Gamma)$ via the formula
\begin{align*}
 (LF)(\gamma) &= \sum \limits_{x \in \gamma}\left( m + \sum \limits_{y \in \gamma \backslash x}a^-(x-y)\right)(F(\gamma \backslash x) - F(\gamma))
 \\ &\ \ \ + \sum \limits_{x \in \gamma}\int \limits_{\R^d}a^+(x-y)(F(\gamma^+ \cup y) - F(\gamma))dy.
\end{align*}
For simplicity of notation, we have let $\gamma \backslash x$ and $\gamma \cup x$ stand for $\gamma \backslash \{x \}$ and $\gamma \cup \{x\}$.
The first term describes the death of a particle located at $x \in \gamma$.
Such a Markov event may be either caused by a state-independent mortality with parameter $m \geq 0$,
or by an interaction with another particle at position $y \in \gamma \backslash x$ at rate $a^-(x-y)\geq 0$. 
The second term describes the branching mechanism where each particle at position $x \in \gamma$ may create a new particle at position $y \in \R^d$.
The distribution and rate of such a Markov event is described by $a^+(x-y) \geq 0$.

Using Remark \ref{REMARK} it is easily seen that $LF(\gamma)$
is well-defined for any $\gamma \in \Gamma$ and $F \in \mathcal{FP}(\Gamma)$, provided that $a^{\pm}$ are bounded and have compact support.
Such model was studied in \cite{FKK09, KK18} where the following balance condition for the birth-and-death rates has been used
\begin{enumerate}
 \item[(G)] $a^{\pm} \geq 0$ are symmetric, bounded, integrable and that there exists $\vartheta > 0$ and $b \geq 0$ such that
 \begin{align}\label{STABILITY}
  \sum \limits_{x \in \eta}\sum \limits_{y \in \eta \backslash x}\left( a^-(x-y) - \vartheta a^+(x-y)\right) \geq - b |\eta|, \ \ \eta \in \Gamma_0.
 \end{align}
\end{enumerate}
Note that this condition does not imply that $LF(\gamma)$ is well-defined for all $\gamma \in \Gamma$. 
Hence we define $LF$ in a different way explained below.

\subsection{The Fokker-Planck equation}
In the formulation of the corresponding Fokker-Planck equation \eqref{FPE} we will only require that $LF$ is well-defined
for $\mu$-a.a $\gamma \in \Gamma$, where $\mu$ belongs to a certain class of states on $\Gamma$.
Hence we do not assume that $a^{\pm}$ have compact supports.
\begin{Definition}
 Let $\mu_0$ be a Borel probability measure on $\Gamma$. 
 A solution to \eqref{FPE} is a family of Borel probability measures $(\mu_t)_{t \geq 0}$ on $\Gamma$ satisfying,
 for any $F \in \mathcal{FP}(\Gamma)$,
 \begin{enumerate}
  \item[(a)] $F, LF \in L^1(\Gamma, \mu_t)$ for all $t \geq 0$.
  \item[(b)] $t \longmapsto \int_{\Gamma}(LF)(\gamma)d\mu_t(\gamma)$ is locally integrable and  
  \[
   \int \limits_{\Gamma}F(\gamma)d\mu_t(\gamma) = \int \limits_{\Gamma} F(\gamma)d\mu_0(\gamma) + \int \limits_{0}^{t}\int \limits_{\Gamma}(LF)(\gamma)d\mu_s(\gamma) ds, \qquad t \geq 0.
  \]
 \end{enumerate}
\end{Definition}
Below we study the Fokker-Planck equation in terms of the corresponding correlation function evolution obtained from \eqref{BBGKY}.
Namely, introduce the combinatorial transformation
\[
 (KG)(\gamma) := \sum \limits_{\eta \Subset \gamma}G(\eta), \qquad\gamma \in \Gamma, \ G \in B_{bs}(\Gamma_0).
\]
with inverse given by $(K^{-1}F)(\eta) = \sum_{\xi \subset \eta}(-1)^{|\eta \backslash \xi|}F(\xi)$. 
For a Borel probability measure $\mu$ on $\Gamma$ the correlation function $k_{\mu}:\Gamma_0 \longrightarrow \R_+$ is uniquely determined by
\begin{align}\label{DEF:CORR}
 \int \limits_{\Gamma}(KG)(\gamma)d\mu(\gamma) = \int \limits_{\Gamma_0}G(\eta)k_{\mu}(\eta)d\lambda(\eta), \qquad G \in B_{bs}(\Gamma_0),
\end{align}
where $\lambda$ denotes the Lebesgue-Poisson measure on $\Gamma_0$ defined by the relation
\[
 \int \limits_{\Gamma_0}G(\eta)d\lambda(\eta) = G(\{\emptyset\}) + \sum \limits_{n=1}^{\infty}\frac{1}{n!}\int \limits_{(\R^d)^n}G(\{x_1,\dots, x_n\})dx_1\dots dx_n, \ \ G \in B_{bs}(\Gamma_0).
\]
Note that such correlation function does not need to exist.
However, it is necessary and sufficient that $\mu$ has locally finite moments and is locally
absolutely continuous with respect to the Poisson measure on $\Gamma$, see \cite{KK02} and the references therein.

Let us explain how \eqref{BBGKY} can be derived from the Fokker-Planck equation, see \cite{FKO07} for additional details.
Take $\mu$ such that it has correlation function $k_{\mu}$ and let $G \in B_{bs}(\Gamma_0)$. Then, at least formally, 
one obtains for $\widehat{L} := K^{-1}LK$ the relation
\begin{align}\label{DUALITY:01}
 \int \limits_{\Gamma}(LKG)(\gamma)d\mu(\gamma) 
 \int \limits_{\Gamma}(K\widehat{L}G)(\gamma)d\mu(\gamma)
 = \int \limits_{\Gamma_0}(\widehat{L}G)(\eta)k_{\mu}(\eta)d \lambda(\eta).
\end{align}
A computation shows that $\widehat{L}$ is given by $\widehat{L} = \widehat{L}_0 + \widehat{L}_1$ with
\begin{align*}
 (\widehat{L}_0G)(\eta) &= - \left( m |\eta| + \sum \limits_{x \in \eta}\sum \limits_{y \in \eta \backslash x}a^-(x-y)\right)G(\eta)
 + \sum \limits_{x \in \eta}\int \limits_{\R^d}a^+(x-y)G(\eta \cup y)d y,
 \\ (\widehat{L}_1G)(\eta) &= - \sum \limits_{x \in \eta}\sum \limits_{y \in \eta \backslash x}a^-(x-y)G(\eta \backslash x)
 + \sum \limits_{x \in \eta}\int \limits_{\R^d}a^+(x-y)G(\eta \backslash x \cup y)dy.
\end{align*}
Hence using \eqref{DEF:CORR} and \eqref{DUALITY:01} we may reformulate \eqref{FPE} to
\begin{align}\label{WCORR}
 \frac{d}{dt}\int \limits_{\Gamma_0}G(\eta)k_{\mu_t}(\eta)d\lambda(\eta) = \int \limits_{\Gamma_0}(\widehat{L}G)(\eta)k_{\mu_t}(\eta)d\lambda(\eta), \ \ G \in B_{bs}(\Gamma_0).
\end{align}
The operator $L^{\Delta}$ introduced in Section 1 should therefore be related with $\widehat{L}$ by 
\begin{align}\label{DUALITY}
 \int \limits_{\Gamma_0}(\widehat{L}G)(\eta)k(\eta) d\lambda(\eta) = \int \limits_{\Gamma_0}G(\eta)(L^{\Delta}k)(\eta)d\lambda(\eta), \ \ G,k \in B_{bs}(\Gamma_0).
\end{align}
It can be shown that it is given by $L^{\Delta} = L_0^{\Delta} + L_1^{\Delta}$, where
\begin{align*}
 (L_0^{\Delta}k)(\eta) &= - \left( m |\eta| + \sum \limits_{x \in \eta}\sum \limits_{y \in \eta \backslash x}a^-(x-y)\right)k(\eta)
     + \sum \limits_{x \in \eta}\sum \limits_{y \in \eta \backslash x}a^+(x-y)k(\eta \backslash x)
 \\ (L_1^{\Delta}k)(\eta) &= - \sum \limits_{x \in \eta}\int \limits_{\R^d}a^-(x-y)k(\eta \cup y)dy 
 + \int \limits_{x \in \eta}\int \limits_{\R^d}a^+(x-y)k(\eta \backslash x \cup y)dy.
\end{align*}
Hence \eqref{WCORR} is simply a weak formulation to \eqref{BBGKY}.

\subsection{Uniqueness for the Fokker-Planck equation}
Below we introduce corresponding Banach spaces and study uniqueness for \eqref{FPE} and \eqref{WCORR}.
Let $\Lb_{\alpha}$ be the Banach space of functions $G$ equipped with the norm
\[
 \| G\|_{\Lb_{\alpha}} = \int \limits_{\Gamma_0}|G(\eta)|e^{\alpha |\eta|} d\lambda(\eta).
\]
Using the duality $\langle G, k \rangle = \int_{\Gamma_0} G(\eta)k(\eta)d\lambda(\eta)$
we may identify $\Lb_{\alpha}^*$ with the Banach space $\K_{\alpha}$ of functions $k$ equipped with the norm
\[
 \| k \|_{\K_{\alpha}} = \esssup \limits_{\eta \in \Gamma_0} |k(\eta)|e^{- \alpha |\eta|}.
\]
Observe that $\Lb = (\Lb_{\alpha})_{\alpha}$ defines a scale of Banach spaces in the sense of \eqref{LINEAR:29}
while $\K = (\K_{\alpha})_{\alpha}$ corresponds to \eqref{LINEAR:50}.
Then we can show the following lemma.
\begin{Lemma}\label{ESTIMATE}
 Suppose that (G) is satisfied. Then
 \begin{enumerate}
  \item[(a)] $\widehat{L}_0, \widehat{L}_1$ define operators in $L(\Lb)$ such that, for all $\alpha' < \alpha$, one has
  \begin{align*}
   \Vert \widehat{L}_0 \Vert_{L(\Lb_{\alpha}, \Lb_{\alpha'})} &\leq \frac{m}{e(\alpha - \alpha')} + \frac{\Vert a^-\Vert_{\infty} + \Vert a^+ \Vert_{\infty}}{4e^2\left(\alpha -  \alpha'\right)^2},
 \\  \Vert \widehat{L}_1G\Vert_{L(\Lb_{\alpha}, \Lb_{\alpha'})} &\leq \frac{ \| a^- \|_{L^1} e^{\alpha} + \| a^+ \|_{L^1} }{e(\alpha - \alpha')}.
 \end{align*}
 \item[(b)] $L_0^{\Delta}, L_1^{\Delta}$ define operators in $L(\K)$ such that, for all $\alpha' < \alpha$, one has 
 \[
  \| \widehat{L}_0 \|_{L(\Lb_{\alpha}, \Lb_{\alpha'})} = \| L_0^{\Delta} \|_{L(\K_{\alpha}, \K_{\alpha})}, \qquad
 \| \widehat{L}_1 \|_{L(\Lb_{\alpha}, \Lb_{\alpha'})} = \| L_1^{\Delta} \|_{L(\K_{\alpha}, \K_{\alpha})}.
 \]
 \end{enumerate}
\end{Lemma}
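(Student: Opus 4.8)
The plan is to obtain part (a) by direct $L^{1}$-type estimates on $\Gamma_{0}$, bounding each of the pieces of $\widehat L_{0}$ and $\widehat L_{1}$ separately and then collecting, and to deduce part (b) from part (a) by duality, identifying $L_{j}^{\Delta}$ with the Banach-space adjoint of $\widehat L_{j}$.

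For part (a) the only tools needed are the combinatorial identity for the Lebesgue--Poisson measure,
\[
 \int_{\Gamma_{0}}\sum_{x\in\eta}H(\eta\setminus x,x)\,d\lambda(\eta)=\int_{\Gamma_{0}}\int_{\R^{d}}H(\eta,x)\,dx\,d\lambda(\eta),
\]
which inserts or removes a particle, together with the scalar bounds $\sup_{t\ge 0}t e^{-\delta t}=(e\delta)^{-1}$ and $\sup_{t\ge 0}t^{2}e^{-\delta t}=4(e\delta)^{-2}$, applied with $\delta=\alpha-\alpha'$. I would estimate the norm in $\Lb_{\alpha'}$ of each piece. For $\widehat L_{0}$: the mortality term $m|\eta|G(\eta)$ and the competition-death term, bounded pointwise by $\|a^{-}\|_{\infty}|\eta|(|\eta|-1)|G(\eta)|$, involve no particle move and are controlled directly by the two scalar bounds; the birth term $\sum_{x\in\eta}\int a^{+}(x-y)G(\eta\cup y)\,dy$ requires one application of the identity to bring the new point into the configuration, after which a double sum over the enlarged configuration remains, again handled with $\|a^{+}\|_{\infty}$ and $\sup t^{2}e^{-\delta t}$. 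Adding the three contributions yields the bound on $\|\widehat L_{0}\|_{L(\Lb_{\alpha},\Lb_{\alpha'})}$. For $\widehat L_{1}$ the configuration size is preserved, so instead of extracting $\|a^{\pm}\|_{\infty}$ one integrates the kernel out via $\int_{\R^{d}}a^{\pm}(x-y)\,dx=\|a^{\pm}\|_{L^{1}}$; the first summand costs one removal, the second a removal followed by an insertion, and only the first power of $|\eta|$ enters, producing the factor $(e(\alpha-\alpha'))^{-1}$ and the stated constants (the weight $e^{\alpha}$ in the $\|a^{-}\|_{L^{1}}$ term coming from the size shift $|\eta|\mapsto|\eta|-1$). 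Finiteness throughout uses only that, by (G), $a^{\pm}$ are bounded and integrable.

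For part (b), note first that each $\widehat L_{j}$ is given by an explicit formula not referring to the index $\alpha$, so it automatically satisfies the compatibility relation \eqref{LINEAR:00}; with the bounds of part (a) this gives $\widehat L_{0},\widehat L_{1}\in L(\Lb)$. Next, $\K_{\alpha}=\Lb_{\alpha}^{*}$ under the pairing $\langle G,k\rangle$, and the identity \eqref{DUALITY}, stated there for $G,k\in B_{bs}(\Gamma_{0})$, extends to all $G\in\Lb_{\alpha}$, $k\in\K_{\alpha'}$: $B_{bs}(\Gamma_{0})$ is dense in $\Lb_{\alpha}$ (truncate in the number of points, in the spatial region and in the values), and once $\widehat L_{j}\in L(\Lb)$ both sides of \eqref{DUALITY} are continuous in $G$, so the functional $(\widehat L_{j})^{*}k\in\Lb_{\alpha}^{*}=\K_{\alpha}$ is represented by the function $L_{j}^{\Delta}k$. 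Thus $L_{j}^{\Delta}$ is precisely the adjoint of $\widehat L_{j}$ in the dual scale; in particular $L_{j}^{\Delta}\in L(\K)$, and the asserted equality of norms is the standard identity $\|T^{*}\|=\|T\|$ for bounded operators between Banach spaces, applied to each component.

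The routine but slightly delicate step is the bookkeeping in part (a): one must track, piece by piece, whether the configuration size changes and hence which power of $|\eta|$ and which exponential weight is produced, so that the combinatorial identity is applied exactly as often as needed and the constants emerge as stated. In part (b) nothing is genuinely hard once these generalities are set up; the only extra point to check is that $L_{j}^{\Delta}k$ is pointwise well defined for $k\in\K$ — again a consequence of $a^{\pm}$ bounded and integrable — so that it may legitimately be compared with the abstract adjoint.
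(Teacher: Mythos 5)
Your approach coincides with the paper's: the paper disposes of (a) as ``a direct computation'' (deferred to the reference it cites) of exactly the kind you describe — insert/remove particles via the Lebesgue--Poisson identity and use $\sup_{t\ge 0}t^je^{-(\alpha-\alpha')t}$ — and proves (b) precisely by the duality $\Lb_{\alpha}^{*}=\K_{\alpha}$, i.e.\ by identifying $L_j^{\Delta}$ with the adjoint of $\widehat L_j$ and invoking $\|T^{*}\|=\|T\|$, so your filling-in of both halves is sound. One caveat on the bookkeeping you flag as the delicate step: carried out as written, the pair-interaction terms give $\sup_{n}n(n-1)e^{-(\alpha-\alpha')n}\le 4e^{-2}(\alpha-\alpha')^{-2}$, so your route yields a constant of the form $\frac{4(\|a^-\|_{\infty}+\|a^+\|_{\infty})}{e^{2}(\alpha-\alpha')^{2}}$ (plus a harmless extra factor $e^{-\alpha'}$ on the $\|a^+\|_{\infty}$ contribution), not the literal $\frac{\|a^-\|_{\infty}+\|a^+\|_{\infty}}{4e^{2}(\alpha-\alpha')^{2}}$ displayed in the lemma; this looks like a typo in the statement rather than a defect of your argument, and is immaterial for the sequel, where only the $\frac{M(\alpha)}{\alpha-\alpha'}$ bound on $\widehat L_1$ is used to verify (B2).
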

\begin{proof}
 Assertion (a) follows by a direct computation, see, e.g., \cite{KK18} and the references therein.
 Assertion (b) is a consequence of the duality $\Lb_{\alpha}^* = \K_{\alpha}$.
\end{proof}
 The following is due to \cite{KK18}.
\begin{Theorem}\label{LINEARTH:12}
 Let $\alpha_0 \in \R$ and $\mu_0$ be a probability measure on $\Gamma$ having correlation function $k_0 \in \K_{\alpha_0}$. Then there exists a unique
 family $(k_t)_{t \geq 0} \subset \bigcup_{\alpha \in \R}\K_{\alpha}$ with the following properties:
 \begin{enumerate}
  \item[(a)] For each $T > 0$ there exists $\alpha_T \geq \alpha_0$ such that $k_t \in \K_{\alpha_T}$, $t \in [0,T]$ and 
   $(k_t)_{t \in [0,T]}$ is the unique classical solution in $\K_{\alpha_T}$ to 
   \begin{align}\label{CORR}
    \frac{\partial k_{t}}{\partial t} = L^{\Delta}k_{t}, \ \ k_{t}|_{t=0} = k_{0}.
   \end{align}
  \item[(b)] There exists a unique family of Borel probability measures $(\mu_t)_{t \geq 0}$ on $\Gamma$ 
  such that, for any $t \geq 0$, $k_t$ given by (a) is the correlation function of $\mu_t$.
 \end{enumerate}
\end{Theorem}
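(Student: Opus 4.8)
The plan is to recognise the forward equation \eqref{CORR} as a particular instance of the abstract Cauchy problem \eqref{EQ:04} in the dual scale $\K = (\K_\alpha)_\alpha$, and to import from \cite{KK18} the positivity argument needed for part~(b). The decisive step is the splitting $L^\Delta = L_0^\Delta + L_1^\Delta$, in which the first summand plays the role of the generator $A$ of a forward evolution system and the second that of the bounded-in-scale perturbation $B$. (If $\alpha_0$ happens to lie below the threshold $\alpha_*$ fixed by $\vartheta, b, m$ in condition (G), we first replace $\alpha_0$ by a larger weight, using $\K_{\alpha_0} \subset \K_\beta$ for $\beta > \alpha_0$.)

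First I would construct the forward evolution system $V$ for $L_0^\Delta$. Under (G) the pre-dual operator $\widehat{L}_0$ generates a positive, sub-stochastic $C_0$-semigroup $\widehat{T}_0(t)$ on the $L^1$-type scale $\Lb$, and setting $V_{\alpha' \alpha}(t,s) := \widehat{T}_0(t-s)^* i_{\alpha' \alpha}$ on $\K_\alpha = \Lb_\alpha^*$ gives $\|V_{\alpha' \alpha}(t,s)\|_{\alpha' \alpha} \leq 1$, so that (A4) holds with $K = 1$; this is essentially the construction in \cite{FKK09, KK18}. The only non-routine point is continuity: the adjoint semigroup need not be strongly continuous on $\K_\alpha$, but the loss of weight supplies a decaying factor, of the form $\sup_\eta \big( m|\eta| + \sum_{x \in \eta}\sum_{y \in \eta \backslash x} a^-(x-y) \big) e^{-(\alpha - \alpha')|\eta|} < \infty$, which makes $(t,s) \longmapsto V_{\alpha' \alpha}(t,s)k$ continuous — in fact continuously differentiable — as a map into the larger space $\K_\alpha$; this yields (A2) and the integral identities (A3) with $L_0^\Delta$ as the generator in the scale (compare Remark \ref{TIMEHOMOGENEOUS}). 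The remaining part $L_1^\Delta$ is, by Lemma \ref{ESTIMATE}, an element of $L(\K)$ with $\|(L_1^\Delta)_{\alpha' \alpha}\|_{\alpha' \alpha} \leq \frac{\|a^-\|_{L^1} e^{\alpha} + \|a^+\|_{L^1}}{e(\alpha - \alpha')}$; being time-independent it satisfies (B1), and it satisfies (B2) with the continuous increasing function $M(\alpha) = e^{-1}(\|a^-\|_{L^1} e^{\alpha} + \|a^+\|_{L^1})$.

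With $V$ and $L_1^\Delta$ available, Theorem \ref{THEOREM:00} and Corollary \ref{CORR:00}, read in the scale $\K$, produce a family $W(t,s)$ on $\{ 0 \leq t-s < T(\alpha',\alpha) \}$, $T(\alpha',\alpha) = \frac{\alpha - \alpha'}{2eM(\alpha)}$, and show that $k_t := W_{\alpha_1 \alpha_0}(t,0)k_0$ is the unique classical solution of \eqref{CORR} in $\K_{\alpha_1}$ on $[0, T(\alpha_0,\alpha_1))$. Since $M$ is unbounded, $T(\alpha_0,\alpha_1) \to 0$ as $\alpha_1 \to \infty$, so a single application only yields a solution that is local in time; the global solution claimed in part~(a) is obtained by iterating — solve up to some time landing in $\K_{\alpha_1}$, restart from the value reached there, solve one further step landing in $\K_{\alpha_2}$, and so on — where at each step one is free to choose how much weight to sacrifice, so that the successive time increments can be arranged to exhaust any prescribed $[0,T]$ at the cost of a (possibly very large, but finite) terminal weight $\alpha_T$. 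Uniqueness of the family $(k_t)$ within $\bigcup_\alpha \K_\alpha$ then follows interval by interval from Corollary \ref{CORR:00}(c).

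It remains to prove part~(b), i.e. that $k_t$ is the correlation function of a probability measure $\mu_t$ — equivalently, that $k_t$ is positive definite in the sense of Lennard — the measure being then unique by the correspondence of \cite{KK02}. Here I would argue by approximation: introduce cut-off generators $L^\Delta_N$ (truncating the rates $a^{\pm}$ and the particle number) whose classical solutions $k^N_t$ are, by construction, correlation functions of genuine probability measures $\mu^N_t$ of a finite-particle Markov dynamics; then invoke the stability statement — Theorem \ref{LINEARTH:05} applied to the scale $\K$, with $M$ as above and independent of $N$ — to obtain $k^N_t \to k_t$ in $\K_{\alpha_T}$ uniformly on compact time intervals, and use that Lennard positivity passes to such limits to conclude that $k_t$ is itself positive definite. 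That the resulting family $(\mu_t)_{t \geq 0}$ solves \eqref{FPE}, and is the unique family with $k_{\mu_t} = k_t$, then follows from \eqref{DEF:CORR} and \eqref{DUALITY:01} together with $\widehat{L} = K^{-1} L K$. I expect this last part to be the main obstacle: it is where the argument genuinely leans on \cite{KK18} (and on the general positivity result of \cite{FK16}), since constructing the cut-off dynamics so that they stay sub-stochastic and propagating Lennard positivity through the limit is delicate, whereas everything in part~(a) is a faithful transcription of Section~3 into the dual scale $\K$.
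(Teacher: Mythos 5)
The first thing to say is that the paper does not prove Theorem \ref{LINEARTH:12} at all: it is introduced with the sentence ``The following is due to \cite{KK18}'' and is used as an imported black box in the proof of the subsequent theorem. So there is no in-paper argument to compare yours against; what can be assessed is whether your sketch would actually deliver the statement, and there it has one genuine gap.

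The gap is the passage from local to global existence in part (a). Your local construction is fine: the splitting $L^{\Delta}=L_0^{\Delta}+L_1^{\Delta}$ (equivalently, working with $\widehat{L}_{0,b}$, $\widehat{L}_{1,b}$ on the pre-dual scale $\Lb$ and dualizing, which is how the paper itself sets things up in Section 5.3), the contraction bound $K=1$, and the estimate $M(\alpha)=e^{-1}\bigl(\Vert a^-\Vert_{L^1}e^{\alpha}+\Vert a^+\Vert_{L^1}+b\bigr)$ all match Lemma \ref{ESTIMATE} and put you in the setting of Theorem \ref{THEOREM:00}. But precisely because $M(\alpha)$ grows like $e^{\alpha}$, the iteration you describe cannot ``exhaust any prescribed $[0,T]$''. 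The time gained in one step starting from weight $\alpha_j$ and landing at $\alpha_{j+1}=\alpha_j+\beta_j$ is $T(\alpha_j,\alpha_{j+1})\le \beta_j/(2K\Vert a^-\Vert_{L^1}e^{\alpha_j+\beta_j})\le \beta_j e^{-\alpha_j}/(2K\Vert a^-\Vert_{L^1})$, and since $\sum_j \beta_j e^{-\alpha_j}\le 2\sum_j\int_{\alpha_j}^{\alpha_{j+1}}e^{-s}\,ds\le 2e^{-\alpha_0}$ for every admissible choice of the increments, the total lifespan obtainable by restarting is bounded by a finite constant of order $e^{-\alpha_0}/(K\Vert a^-\Vert_{L^1})$, no matter how the weights $\alpha_j$ are chosen. (Only when $M$ is bounded, as in Remark \ref{LINEARTH:06}, does the Ovcyannikov scheme globalize; here that would require $a^-\equiv 0$.) The genuinely hard content of \cite{KK18} is exactly this point: one needs an additional a priori (sub-Poissonian) bound on the solution, obtained from the structure of the model via positivity and comparison with a dominating evolution, to show that $k_t$ in fact stays in some $\K_{\alpha_T}$ on all of $[0,T]$. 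That ingredient is not recoverable from Section 3 of this paper, and your sketch silently assumes it. Part (b) of your proposal (cut-off dynamics, stability, Lennard positivity passing to the limit) is the right circle of ideas, but it too ultimately rests on the same global a priori control, so the honest conclusion is that the theorem genuinely requires the external input of \cite{KK18} rather than being a transcription of Section 3 into the scale $\K$.
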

Note that existence and uniqueness is only established for classical solutions to \eqref{CORR}.
Although an evolution of states $(\mu_t)_{t \geq 0}$ was constructed, its relation to \eqref{FPE} was not considered there. 
Below we show that the results obtained in Section 3 can be applied to prove the following.
\begin{Theorem}
 Suppose that condition (G) is satisfied. Then
 \begin{enumerate}
  \item[(a)] The family $(\mu_t)_{t \geq 0}$, constructed in Theorem \ref{LINEARTH:12}, is a weak solution to \eqref{FPE}.
  \item[(b)] Let $(\nu_t)_{t \geq 0}$ be another weak solution to \eqref{FPE} which admits a sequence of correlation functions $(k_{\nu_t})_{t \geq 0}$
  and suppose that for any $T > 0$ there exists $\beta_T \geq \alpha_0$ with $\sup_{t \in [0,T]} \| k_{\nu_t} \|_{\K_{\beta_T}} < \infty$.
  Then $\mu_t = \nu_t$ for all $t \geq 0$.
 \end{enumerate}
\end{Theorem}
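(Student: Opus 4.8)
The plan is to translate both assertions into statements about correlation functions and then to read them off from the abstract results of Section~3. I would work in the scales $\E=\Lb=(\Lb_\alpha)_{\alpha>\alpha_*}$ and $\B=\K=(\K_\alpha)_{\alpha>\alpha_*}$, taking $A=\widehat L_0$ and $B=\widehat L_1$. By Lemma~\ref{ESTIMATE}(a), $\widehat L_1$ satisfies (B1) and (B2) with $M(\alpha)=e^{-1}(\|a^-\|_{L^1}e^{\alpha}+\|a^+\|_{L^1})$, while — this is the ingredient supplied by \cite{FKK09,KK18} underlying Theorem~\ref{LINEARTH:12} — the operator $\widehat L_0$ is, for a suitable $\alpha_*$, the generator on $\Lb$ of a semigroup in the sense of Remark~\ref{TIMEHOMOGENEOUS}; being time-homogeneous it yields evolution systems $V(t,s)=V(s,t)=T(t-s)$ satisfying (A1)--(A4) and (A2)$^*$--(A4)$^*$. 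I would take $\mathcal Y=B_{bs}(\Gamma_0)$, which is dense in every $\Lb_\beta$. The one analytic ingredient used throughout is the rigorous form of the duality \eqref{DUALITY:01}: if $\mu$ is a Borel probability measure on $\Gamma$ whose correlation function $k_\mu$ is bounded in some $\K_\alpha$ and $a^\pm\in L^1(\R^d)$, then $KG,\,L(KG)\in L^1(\Gamma,\mu)$ and
\[
 \int_\Gamma (LKG)(\gamma)\,d\mu(\gamma)=\langle \widehat L G,k_\mu\rangle,\qquad G\in B_{bs}(\Gamma_0).
\]

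For (a), fix $T>0$ and $F=KG\in\mathcal{FP}(\Gamma)$ with $G\in B_{bs}(\Gamma_0)$. By Theorem~\ref{LINEARTH:12}(a), $(k_t)_{t\in[0,T]}$ is the classical solution of \eqref{CORR} in $\K_{\alpha_T}$. Pairing with $G\in\bigcap_\beta\Lb_\beta$ and using that $L^\Delta$ is the adjoint of $\widehat L$ in the pairing $\langle\cdot,\cdot\rangle$ (relation \eqref{DUALITY}), so $\langle G,L^\Delta k\rangle=\langle\widehat L G,k\rangle$, the scalar map $t\mapsto\langle G,k_t\rangle$ is continuously differentiable on $[0,T]$ with derivative $\langle\widehat L G,k_t\rangle$; integrating gives $\langle G,k_t\rangle=\langle G,k_0\rangle+\int_0^t\langle\widehat L G,k_s\rangle\,ds$. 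Now $\langle G,k_t\rangle=\int_\Gamma F\,d\mu_t$ by \eqref{DEF:CORR}, while $\langle\widehat L G,k_s\rangle=\int_\Gamma (LF)\,d\mu_s$ by the displayed bridge, applicable since $\sup_{s\in[0,T]}\|k_s\|_{\K_{\alpha_T}}<\infty$; the bridge also gives $F,LF\in L^1(\Gamma,\mu_t)$ and local integrability of $s\mapsto\int_\Gamma (LF)\,d\mu_s$. Hence $\int_\Gamma F\,d\mu_t=\int_\Gamma F\,d\mu_0+\int_0^t\int_\Gamma (LF)\,d\mu_s\,ds$, and since every element of $\mathcal{FP}(\Gamma)$ has the form $KG$ and $T$ is arbitrary, $(\mu_t)_{t\geq0}$ solves \eqref{FPE}.

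For (b), note first that $(\nu_t)$ solving \eqref{FPE} forces $\nu_0=\mu_0$, hence $k_{\nu_0}=k_0$. Reversing the computation of (a), using the bridge now with $\mu=\nu_s$ (legitimate because $\sup_{s\in[0,T]}\|k_{\nu_s}\|_{\K_{\beta_T}}<\infty$), the family $(k_{\nu_t})$ satisfies, for all $G\in B_{bs}(\Gamma_0)$ and all sufficiently large $\alpha''$,
\[
 \langle G,k_{\nu_t}\rangle=\langle G,k_0\rangle+\int_0^t\langle(\widehat L_0+\widehat L_1)_{\alpha''\alpha}G,k_{\nu_s}\rangle\,ds,
\]
with $t\mapsto\langle G,k_{\nu_t}\rangle$ continuous; that is, $(k_{\nu_t})$ solves the dual Cauchy problem \eqref{EQ:13} with $s=0$, $\ell=k_0$, $\alpha'=\alpha_0$ and $\alpha=\max\{\alpha_T,\beta_T,\alpha_0+1\}$ (so $k_{\nu_t}\in\K_\alpha$). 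By part (a), $(k_t)$ solves the same problem. Theorem~\ref{LINEARTH:11}(b) then forces $k_t=W_{\alpha'\alpha}(0,t)^*k_0=k_{\nu_t}$ for $0\le t<\min\{T,T(\alpha',\alpha)\}$; a standard continuation argument — restart at the largest $\tau$ up to which the two families agree, where $k_\tau=k_{\nu_\tau}\in\K_\alpha$ by continuity, and re-apply Theorem~\ref{LINEARTH:11}(b) on $[\tau,\tau+\delta)$ with a step $\delta>0$ that depends only on $\alpha$ and the model (obtained by maximising $T(\alpha,\alpha+\e)$ over $\e>0$) — yields $k_{\nu_t}=k_t=k_{\mu_t}$ on $[0,T]$, hence for all $t\geq0$. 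By the one-to-one correspondence between such measures and their correlation functions \cite{KK02}, $\nu_t=\mu_t$ for all $t\geq0$.

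The main obstacle is the displayed bridge: establishing \eqref{DUALITY:01} together with $KG,\,L(KG)\in L^1(\Gamma,\mu)$ when $k_\mu$ is only bounded in some $\K_\alpha$ and the kernels $a^\pm$ are \emph{not} assumed compactly supported. This rests on the Minlos-type change of variables on $\Gamma_0$ used to define $\widehat L=K^{-1}LK$, combined with the quantitative estimates of Lemma~\ref{ESTIMATE}(a) to dominate every occurring series and to justify each interchange of sum and integral; the competition between the rates encoded in condition (G) is precisely what keeps these bounds finite. The remaining points — that $B_{bs}(\Gamma_0)$ is an admissible $\mathcal Y$, that $\widehat L_0$ generates a semigroup fulfilling (A2)$^*$--(A4)$^*$ on $\Lb$ (from \cite{FKK09,KK18}), and the time-stepping in (b) — are routine once the bridge is in place.
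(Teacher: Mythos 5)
Your overall strategy coincides with the paper's: part (a) via the $K$-transform duality between $L$ and $\widehat L$, and part (b) by recognizing that $(k_{\nu_t})$ solves the dual Cauchy problem \eqref{EQ:13} and invoking the uniqueness statement of Theorem \ref{LINEARTH:11}(b), followed by a restart-in-time argument. However, there is one concrete gap: you feed the abstract machinery with the decomposition $A=\widehat L_0$, $B=\widehat L_1$, and assert that $\widehat L_0$ generates a semigroup fulfilling Remark \ref{TIMEHOMOGENEOUS} (hence (A2)--(A4) and their starred versions, in particular the uniform bound $\Vert V_{\alpha\alpha'}(t,s)\Vert\leq K$). Condition (G) does not support this. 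The stability inequality \eqref{STABILITY} only dominates the birth part of $\widehat L_0$ by $M(\eta)+b|\eta|$, where $M(\eta)=m|\eta|+\sum_{x\in\eta}\sum_{y\in\eta\setminus x}a^-(x-y)$ is the multiplication part of $-\widehat L_0$; when $b>0$ (and $m$ is possibly $0$) this is \emph{not} a relative bound with respect to $M(\eta)$ itself, so the Thieme--Voigt/Arlotti perturbation theorems do not apply to $\widehat L_0$ directly. Even if $\widehat L_0$ did generate a semigroup, it differs from a contraction generator by the unbounded positive multiplication $b|\eta|$, so the time-uniform bound (A4) with a fixed $K$ would fail, and with it the whole construction of $W(t,s)$ and Theorem \ref{LINEARTH:11}.

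The paper's proof circumvents exactly this by shifting the decomposition: $\widehat L=\widehat L_{0,b}+\widehat L_{1,b}$ with $\widehat L_{0,b}G=\widehat L_0G-b|\eta|G$ and $\widehat L_{1,b}G=\widehat L_1G+b|\eta|G$. For $\widehat L_{0,b}$ the comparison function is $M(\eta)+b|\eta|$, which is precisely what \eqref{STABILITY} controls (with constant $e^{-\beta}/\vartheta<1$ for $\beta>\ln\vartheta$), yielding a positive analytic \emph{contraction} semigroup on each $\Lb_\beta$; and the extra term $b|\eta|G$ is harmless for (B2), contributing only an additive $b/e$ to your $M(\alpha)$ via $|\eta|e^{\alpha'|\eta|}\leq e^{\alpha|\eta|}/(e(\alpha-\alpha'))$. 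With this corrected choice of $A$ and $B$ (and $\alpha_*=\ln\vartheta$), the rest of your argument — the duality bridge in (a), the identification of $(k_{\nu_t})$ as a solution of \eqref{EQ:13}, the application of Theorem \ref{LINEARTH:11}(b), and the continuation with a uniform step obtained by maximizing $T(\alpha',\alpha'+\varepsilon)$ over $\varepsilon$ — goes through as you describe and matches the paper.
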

\begin{proof}
 \textit{(a)} Let $(k_{t})_{t \geq 0}$ be the family of correlation functions corresponding to $(\mu_t)_{t \geq 0}$  given by Theorem \ref{LINEARTH:12}. 
 Fix any $T > 0$ and let $\alpha_T \geq \alpha_0$ be such that $k_{t} \in \K_{\alpha_T}$.
 Take $F \in \mathcal{FP}(\Gamma)$ and let $G \in B_{bs}(\Gamma_0)$ be such that $F = KG$. Then
 \[
  \int \limits_{\Gamma}|F(\gamma)|d\mu_t(\gamma) \leq \int \limits_{\Gamma}\sum \limits_{\eta \Subset \gamma}|G(\eta)|d\mu(\gamma)
 = \int \limits_{\Gamma_0}|G(\eta)|k_{t}(\eta) d\lambda(\eta) < \infty.
 \]
 Since, for $\alpha'' < \alpha$, one has
 $B_{bs}(\Gamma_0) \subset \Lb_{\alpha''}$ we obtain
 $\widehat{L}G \in \Lb_{\alpha}$ and hence
 \begin{align*}
  \int \limits_{\Gamma_0}|\widehat{L}G(\eta)|k_{t}(\eta)d\lambda(\eta)
  &\leq \sup \limits_{t \in [0,T]}\| k_{t}\|_{\K_{\alpha_T}} \int \limits_{\Gamma_0}|\widehat{L}G(\eta)|e^{\alpha |\eta|}d\lambda(\eta) < \infty,
 \end{align*}
 where we have used that $[0,T]\ni t \longmapsto k_{t} \in \K_{\alpha_T}$ is continuously differentiable and hence bounded.
 This implies that $\widehat{L}G \in L^1(\Gamma_0, k_t\lambda)$.
 Since $K$ can be uniquely extended to a bounded linear operator $K: L^1(\Gamma_0, k_{t}\lambda) \longrightarrow L^1(\Gamma,\mu_t)$,
 see \cite{KK02}, it follows that $LF = K\widehat{L}G \in L^1(\Gamma,\mu_t)$. Finally by \eqref{DEF:CORR} we obtain
 \begin{align}\label{EQ:00}
  \int \limits_{\Gamma}F(\gamma)d\mu_t(\gamma) = \int \limits_{\Gamma_0}G(\eta)k_{t}(\eta)d\lambda(\eta),
 \end{align}
 and using the definition of $\widehat{L}$ together with \eqref{DUALITY} yields
 \begin{align}\label{EQ:01}
   \int \limits_{\Gamma}(LF)(\gamma)d\mu_t(\gamma) &= \int \limits_{\Gamma_0}(\widehat{L}G)(\eta)k_{\mu_t}(\eta)d\lambda(\eta)
 = \int \limits_{\Gamma_0}G(\eta)(L^{\Delta}k_{\mu_t})(\eta)d\lambda(\eta).
 \end{align}
 Note that \eqref{DUALITY} was stated only for functions from $B_{bs}(\Gamma_0)$.
 However, by approximation it extends to all $k \in \K_{\alpha_T}$.
 In particular, it follows from Lemma \ref{ESTIMATE}.(b) for $\beta > \alpha_T$
 \begin{align*}
  \left| \int \limits_{\Gamma}LF(\gamma)d\mu_t(\gamma)\right| 
  &\leq \int \limits_{\Gamma_0}|G(\eta)| |L^{\Delta}k_t(\eta)|d\lambda(\eta)
 \\ &\leq \| L^{\Delta}\|_{L(\K_{\alpha_T}, \K_{\beta})} \sup \limits_{t \in [0,T]}\| k_t \|_{\K_{\alpha_T}} \int \limits_{\Gamma_0}|G(\eta)| e^{\beta |\eta|} d\lambda(\eta) < \infty.
 \end{align*}
 Since $T > 0$ was arbitrary, we conclude that $(\mu_t)_t$ is a solution to \eqref{FPE}.

 \textit{(b)} Conversely, let $(\nu_t)_{t \geq 0}$ be a solution to \eqref{FPE} with the desired properties. 
 Using \eqref{EQ:00} and \eqref{EQ:01} in this particular case shows that $k_{\nu_t}$ satisfies \eqref{WCORR}.
 In particular, $t \longmapsto \langle G, k_{\nu_t}\rangle$ is continuous for any $G \in B_{bs}(\Gamma_0)$.
 By approximation and since $\sup_{t \in [0,T]} \| k_{\nu_t}\|_{\K_{\beta_T}} < \infty$, we see that 
 $t \longmapsto \langle G, k_{\nu_t}\rangle$ is also continuous for any $G \in \Lb_{\beta_T}$.
 Thus it suffices to show that Theorem \ref{LINEARTH:11} is applicable.
 Indeed, write $\widehat{L} = \widehat{L}_{0,b} + \widehat{L}_{1,b}$, where 
 \[
  \widehat{L}_{0,b}G := \widehat{L}_0G - b|\eta|G, \qquad \widehat{L}_{1,b}G := \widehat{L}_1G + b|\eta|G.
 \]
 For $\beta > \ln(\vartheta)$ let 
$\mathcal{D}_{\beta} := \{ G \in \Lb_{\beta} \ | \ M\cdot G \in \Lb_{\beta} \}$,
 where $M(\eta) = m |\eta| + \sum_{x \in \eta}\sum_{y \in \eta \backslash x}a^-(x-y)$. 
 Condition \eqref{STABILITY} implies, for any $0 \leq G \in \mathcal{D}_{\beta}$,
  \[
   \int \limits_{\Gamma_0}\sum \limits_{x \in \eta}\int \limits_{\R^d}|G(\eta \cup y)|a^+(x-y)dy e^{\beta |\eta|}d\lambda(\eta) 
   \leq \frac{e^{-\beta}}{\vartheta}\int \limits_{\Gamma_0}\left( M(\eta) + b|\eta|\right) |G(\eta)|e^{\beta|\eta|}d \lambda(\eta).
  \]
  By \cite{TV06} and \cite{AR91} it follows that $(\widehat{L}_{0,b}, \mathcal{D}_{\beta})$ is the generator of a positive, analytic semigroup 
  $(S_{\beta}(t))_{t \geq 0}$ of contractions on $\Lb_{\beta}$. It is not difficult to see that $(S_{\beta}(t))_{t \geq 0}$ with generator 
 satisfies Remark \ref{TIMEHOMOGENEOUS} for $\beta > \ln(\vartheta)$, see, e.g., \cite{FK16} for a more general statement.
 Moreover, for any $\beta' < \beta$ and $G \in \Lb_{\beta}$, we obtain
  \[
   \Vert \widehat{L}_{1,b}G\Vert_{\Lb_{\beta'}} \leq \frac{\langle a^- \rangle e^{\alpha} + b + \langle a^+ \rangle }{e(\alpha - \alpha')}\Vert G \Vert_{\Lb_{\alpha}}.
  \]
  This shows that Theorem \ref{THEOREM:00} is applicable in the time-homogeneous case 
  with $\alpha_* = \ln(\vartheta)$, $\E_{\alpha} = \Lb_{\alpha}$, $A = L_{0,b}$ and $L_{1,b} = B$.
  Next, using Theorem \ref{LINEARTH:11} with $\mathcal{Y} = B_{bs}(\Gamma_0)$ gives the assertion.
 
\end{proof}

\subsection*{Acknowledgments}
Financial support through CRC701, project A5, at Bielefeld University is gratefully acknowledged.
The author would like to thank the anonymous referee 
for his patience while reading the first version of this work.
In particular, he gratefully acknowledges his helpful comments which lead to an improvement of 
the presentation of this work.

\begin{footnotesize}

\bibliographystyle{alpha}
\bibliography{Bibliography}

\end{footnotesize}

\end{document}